\documentclass[10pt]{article} % For LaTeX2e
\usepackage[preprint]{tmlr}
\usepackage{float}

\usepackage{amsmath,amsfonts,bm}

\def\eqref#1{equation~\ref{#1}}
\def\1{\bm{1}}

\DeclareMathAlphabet{\mathsfit}{\encodingdefault}{\sfdefault}{m}{sl}
\SetMathAlphabet{\mathsfit}{bold}{\encodingdefault}{\sfdefault}{bx}{n}

\DeclareMathOperator*{\argmin}{arg\,min}

\usepackage{hyperref}
\usepackage{url}


\title{Preference-Based Reward Learning under Partial Observability with Inexact Dynamics}

\author{\name Reza Zolnouri \email zolnouri@mathc.rwth-aachen.de \\
      \addr Department of Mathematics\\
      RWTH Aachen University
      \\
      \medskip
      \\
      \name Semih Cayci \email cayci@mathc.rwth-aachen.de \\
      \addr Department of Mathematics \\
      RWTH Aachen University}

\usepackage{amsmath}
\usepackage{amssymb}
\usepackage{mathtools}
\usepackage{amsthm}
\usepackage{mathrsfs}
\usepackage{bm}
\usepackage{bbm}
\usepackage{algorithm}
\usepackage{algorithmic}
\usepackage{graphicx}
\usepackage{subcaption}
\usepackage[textsize=tiny]{todonotes}

\usepackage[capitalize,noabbrev]{cleveref}

\theoremstyle{plain}
\newtheorem{theorem}{Theorem}[section]

\newtheorem{lemma}[theorem]{Lemma}
\newtheorem{corollary}[theorem]{Corollary}
\theoremstyle{definition}
\newtheorem{definition}[theorem]{Definition}
\newtheorem{assumption}[theorem]{Assumption}
\theoremstyle{remark}
\newtheorem{remark}[theorem]{Remark}

\usepackage{array}
\usepackage{makecell}
\usepackage{wrapfig}
\usepackage{float}

\usepackage{xcolor}
\usepackage{enumitem}

\usepackage[textsize=tiny]{todonotes}

\usepackage{algorithmic}

\usepackage[most]{tcolorbox}

\usepackage{tikz}
\usetikzlibrary{calc}
\usetikzlibrary{positioning}
\usetikzlibrary{arrows.meta}
\usetikzlibrary{decorations.pathreplacing}
\usetikzlibrary{decorations.markings}
\usetikzlibrary{angles,quotes}

\definecolor{ForestGreen}{rgb}{0.13,0.55,0.13}
\definecolor{maroon}{RGB}{190,0,42}

\newcommand{\bA}{\mathbb{A}}
\newcommand{\bB}{\mathbb{B}}

\newcommand{\bE}{\mathbb{E}}

\newcommand{\bN}{\mathbb{N}}

\newcommand{\bP}{\mathbb{P}}

\newcommand{\bR}{\mathbb{R}}
\newcommand{\bS}{\mathbb{S}}

\newcommand{\cF}{\mathcal{F}}

\def\month{MM}  % Insert correct month for camera-ready version
\def\year{YYYY} % Insert correct year for camera-ready version
\def\openreview{\url{https://openreview.net/forum?id=XXXX}} % Insert correct link to OpenReview for camera-ready version

\begin{document}

\maketitle

\begin{abstract}
% Most theoretical analyses of Reinforcement Learning from Human Feedback (RLHF) assume perfect knowledge of the true environment state, ignoring the partial observability inherent in real-world deployments. 
Existing theory for preference-based reward learning is largely developed under full observability. In this paper, we study how partial observability and inexact latent-state inference affect reward learning from preferences. To that end, we study preference-based reward learning under partial observability, where the learner forms latent-state estimates using an inexact learned POMDP model, so model error can accumulate over time. For finite log-linear POMDPs, we characterize this error term by establishing the stability of the belief filter to parametric model error under certain mixing conditions, yielding bounds on the belief mismatch in expectation and in high probability. We further extend this stability mechanism beyond the log-linear setting to neural-softmax POMDP models with overparameterized neural networks. We then propagate these errors into trajectory-level feature perturbations and derive finite-sample guarantees for constrained Bradley--Terry reward estimation from preferences.
% in the regularized covariance geometry.
Our results decouple statistical error from an irreducible model-mismatch bias, and clarify when preference-based reward learning remains feasible under partial observability with imperfect dynamics.
\end{abstract}

\section{Introduction}
\label{sec:introduction}
 
Reinforcement Learning from Human Feedback (RLHF) combines \emph{preference-based reward learning} and \emph{policy optimization}. A common pipeline collects pairwise (or K-wise) comparisons between trajectory segments, fits a parametric preference model, e.g., Bradley--Terry model \citep{Bradley1952}, to cumulative trajectory features, and then optimizes a policy using the learned reward \citep{ChristianoLBMLA17, Wirth2017, kaufmann2025a}. Recent theory for this mechanism in fully observed MDPs analyzes regularized preference MLE under self-normalized concentration \citep{ZhuPrincipledComparisons, Du2024Exploration-DrivenUtilization}. Consider an interactive agent such as an LLM: the user-facing observations are text, clicks, or other surface signals, while the \emph{real} state includes latent human factors (e.g., beliefs, goals, interpretations, accuracy and affective state) that are not accessible to the system. The learner must act through an inferred posterior belief rather than the true environment state. In particular, we argue that RLHF must recover an \emph{unknown} reward from preference feedback while simultaneously operating under latent-state uncertainty.

This motivates modeling RLHF interaction as a Partially Observable Markov Decision Process (POMDP), where an unobserved latent state evolves Markovianly and the agent receives noisy observations. A standard way to obtain a Markovian control representation is via the induced belief-state process (the belief-MDP) \citep{Astrom1965, KAELBLING199899, Uesato2022}. Accordingly, we lift the preference dataset (trajectory pairs with preference labels) into the belief-MDP representation by constructing beliefs along each recorded trajectory under the learned model and forming belief-based comparison features. However, this reduction introduces an additional, orthogonal difficulty: belief states are not directly observed and must be computed via Bayesian filtering. In practice, filtering uses an \emph{inexact learned} POMDP model estimated from data, so preference features are evaluated on \emph{approximate} belief trajectories, creating a structured, history-dependent model-mismatch channel. Therefore, approximation error propagates from learned dynamics to belief error, then to trajectory-feature perturbations, and finally to bias in the preference MLE (Figure~\ref{fig:error-propagation-pipeline}).

\begin{figure}[H]
\centering
\resizebox{\columnwidth}{!}{%
\begin{tikzpicture}[
  font=\scriptsize,
  node distance=0.95cm,
  box/.style={draw, rounded corners=1.5pt, inner xsep=4pt, inner ysep=3pt, align=center},
  arr/.style={-{Stealth[length=1.7mm]}, semithick}
]
\node[box] (est) {POMDP model\\ estimation};
\node[box, right=of est] (bel) {belief\\ error};
\node[box, right=of bel] (feat) {feature\\ perturbation};
\node[box, right=of feat] (mle) {preference\\ MLE bias};

\draw[arr] (est) -- (bel);
\draw[arr] (bel) -- (feat);
\draw[arr] (feat) -- (mle);
\end{tikzpicture}%
}
\caption{Error-propagation pipeline}
\label{fig:error-propagation-pipeline}
\vspace{-0.6em}
\end{figure}

The central technical obstacle is that Bayesian filtering is generally \emph{not} contractive in total variation and may amplify errors over time; stability requires conditions that jointly control transition mixing and informativeness of the observation channel. Building on a recent stochastic filter stability theorem \cite{Mcdonald2024StochasticCriteria}, we establish expectation-level stability and quantify the bias introduced by parametric model mismatch at each filtering step. This analysis yields an explicit Lipschitz stability bound for the parameter-to-belief mapping, along with high-probability control over the time-averaged belief error using a martingale argument. We first develop this mechanism for finite log-linear POMDPs and then extend it to neural-softmax POMDP models in the kernel regime, where finite-width linearization errors enter additively into the belief perturbation bound. These results can then be extended to define a uniform belief-accuracy event applicable across all comparisons within an RLHF dataset.

We position our analysis at the interface of controlled filter stability and preference-based reward estimation, to quantify how learned-dynamics error in a POMDP propagates through belief updates into reward learning in the belief-MDP. This belief mismatch perturbs the trajectory-level feature differences used in preference comparisons and the associated empirical feature covariance. We propagate these perturbations through a Bradley--Terry MLE analysis, obtaining a finite-sample reward recovery bound that separates (i) a statistical term vanishing with the number of human comparisons $N_{\mathrm{HF}}$, (ii) an irreducible model-mismatch bias controlled by filtering stability, and (iii) the regularization bias introduced to ensure well-posedness when the feature covariance is ill-conditioned or singular. This decomposition clarifies when preference-based RLHF remains statistically feasible under partial observability with an \emph{inexact} learned POMDP model, and which stability and modeling choices govern the achievable accuracy.

\paragraph{Contributions.}
\begin{itemize}[leftmargin=*]
\item \textbf{A theoretical framework for preference-based reward learning under partial observability.}
We establish, to the best of our knowledge, the first finite-sample reward-learning
guarantees for preference-based RLHF in POMDPs with inexact learned dynamics. We lift
the preference dataset into the belief-MDP representation and provide an explicit
error-propagation analysis from POMDP model mismatch to reward-estimation bias. This
setting is not covered by existing fully observed RLHF theory, where trajectory
features are assumed to be directly observed or exactly specified.

\item \textbf{Belief stability under learned log-linear and neural-softmax dynamics.}
We prove that the Bayesian belief filter is Lipschitz-stable in expectation under
parametric model error and a Dobrushin-type mixing condition
(Theorem~\ref{thm:param-belief-lip}). Unlike classical initialization-stability
results, model mismatch injects fresh error at every filtering step, requiring a
stronger sufficient contraction condition. We extend this mechanism to neural-softmax
models under a local NTK-style linearization event, where finite-width errors enter
additively. We then use martingale concentration to establish a uniform,
high-probability bound on time-averaged belief error across the dataset, yielding a
reusable belief-accuracy event.

\item \textbf{Robust Bradley--Terry reward learning from approximate beliefs.}
We derive a finite-sample guarantee for the constrained Bradley--Terry MLE
(Theorem~\ref{thm:main-mu}) that decomposes into a statistical error decaying as
$O(N_{\mathrm{HF}}^{-1/2})$, an irreducible model-mismatch bias controlled by belief
error, and a regularization bias. A key consequence is that, unlike in the fully
observed setting, more human feedback cannot reduce the estimation error below the
bias floor induced by imperfect dynamics, clarifying when and why reward learning
degrades under partial observability.

\end{itemize}

\paragraph{Organization.}
Section~\ref{sec:setup} introduces the POMDP model, log-linear dynamics, Bayesian filtering, and the induced belief-MDP representation.
Section~\ref{sec:belief-stability} derives expectation-level and high-probability stability guarantees for Bayesian filtering under learned log-linear dynamics. Appendix~\ref{sec:nn-extension} extends the belief-stability mechanism to neural-softmax models through an NTK-style linearization.
Section~\ref{sec:rlhf} formalizes the preference-learning setup and establishes our main reward estimation guarantee under belief approximation error.
Section~\ref{sec:conclusion} discusses implications, limitations, and future directions.
Technical proofs are deferred to the appendix.

%%%%%%%%%%%%%%%%%%%%%%%%%%%%%
%%%%%%%%%%%%%%%%%%%%%%%%%%%%%
\section{Related Work}
\label{sec:related_work}

Our setting draws on three adjacent literatures: learning under partial observability, stability of Bayesian filtering, and preference-based RLHF. POMDPs formalize decision-making with latent states and noisy observations \citep{Astrom1965}, but are statistically hard to learn in full generality \citep{Krishnamurthy2016, Jin2020}, motivating work under structural assumptions and algorithmic frameworks for planning and learning under partial observability \citep{KAELBLING199899, Du2019, liu2022sample, Golowich2022, Zhan2022, cayci2024recurrent}.

A closely related technical thread studies \emph{filter stability} under initialization error or model mismatch: classical analyses typically require strong mixing assumptions \citep{VanHandel2008}, while recent results provide explicit exponential stability via Dobrushin-type contractions that account for both transitions and observations \citep{Mcdonald2024StochasticCriteria}.

RLHF learns rewards from human feedback and then optimizes policies using standard RL machinery \citep{ChristianoLBMLA17, Wirth2017, Ji2023}. Reward learning is often instantiated via Bradley--Terry model \citep{Bradley1952, Plackett1975} and combined with policy optimization methods such as PPO \citep{Schulman2017}, underpinning successes in robotics and interactive systems \citep{Brown2019} and LLM alignment \citep{Ziegler2019, Stiennon2020, Bai2022, Ouyang2022, Achiam2023}. Recent theory analyzes preference-based MLE and RLHF-style learning in fully observed MDPs, from tabular regimes to function approximation and exploration-driven policy optimization \citep{Novoseller2020, Chen2022, ZhuPrincipledComparisons, Du2024Exploration-DrivenUtilization, kaufmann2025a, Saha2023}.

A smaller line of work connects RLHF to latent-state structure and non-Markovian rewards. Most closely, \citet{Kausik2024} model RLHF via \emph{partially-observed reward-states} (PORRL/PORMDP), where the environment state is observed but feedback depends on an additional latent ``internal'' reward-state. In contrast, we consider latent environment states with no access to rewards, and the agent interacts only through noisy observations and preference feedback. We quantify how belief computation under an \emph{inexact learned} dynamics model perturbs trajectory features and induces a principled bias in Bradley--Terry reward estimation.

%%%%%%%%%%%%%%%%%%%%%%%%%%%%%%%%%%%%%%%%%%%%%%%%%%%%%%%%%%%%%%%%%%%%%%%%%%%%
\section{Setup and Preliminaries}
\label{sec:setup}

\subsection{POMDP model and trajectories}
\label{subsec:pomdp}

We work with a finite POMDP \(
\bigl(\bS,\ \bA,\ \hat\bS,\ P_\theta,\ \Phi_w,\ r,\ \nu_0\bigr),
\) where \(\bS\) is the finite latent state space, \(\hat\bS\) is the finite observation space, \(\bA\) is the finite action set, \(P_\theta(\cdot\mid s,a)\in\Delta_{\bS}\) is the transition kernel, and \(\Phi_w(\cdot\mid s)\in\Delta_{\hat\bS}\) is the observation model. The reward function is \(r:\bS\times\bA\to\bR\), and the initial state distribution \(\nu_0\in\Delta_{\bS}\) is assumed known. The stability analysis below depends on the transition and observation channels only through their Dobrushin coefficients and the resulting explicit cardinality-dependent constants.

\begin{assumption}[Model structure and bounded features]\label{as:Model-LogLinear}

Model dynamics admit the following log-linear structure
\begin{align}
&P_\theta(s'|s,a)
=\frac{\exp\{\theta^\top\phi_p(s,a,s')\}}{\sum_{\bar s\in\bS}\exp\{\theta^\top\phi_p(s,a,\bar s)\}},\qquad
&
\Phi_{w}(\hat s\,|\,s)=\frac{\exp\{ w^\top \phi_\Phi(s,\hat s)\}}{\sum_{\bar s\in\hat \bS}\exp\{ w^\top\phi_\Phi(s,\bar s)\}}.\notag
\end{align}
\medskip\noindent
Here $\phi_p$ is a feature map for the transition kernel and $\phi_\Phi$ is a feature map for the observation model, and we assume
\[
\sup_{s,a,s'}\|\phi_p(s,a,s')\|_2\le B,\qquad
\sup_{s,\hat s}\|\phi_\Phi(s,\hat s)\|_2\le B.
\]
For some $B<\infty$. The initialization also satisfies
\begin{equation}\label{eq:mu-min-max}
1>\nu_{\max}\ge\nu_0(s)\ge\nu_{\min}>0, \qquad \forall s \in \bS. 
\end{equation}
\end{assumption}

\begin{remark}[Exponential-family modeling and model-based RL]\label{rem:expfam-mbrl}
The log-linear parametrizations in Assumption \ref{as:Model-LogLinear} are finite exponential-family models, standard in statistical modeling (e.g., \citet{bishop2007, wainwright2008, van2008hidden}). As a consequence, our framework falls under \emph{model-based RL}. One can use the assumed learned models for belief filtering and downstream policy optimization. In this work we focus on the preference-based reward learning guarantees, treating the policy-optimization stage as modular and leaving its integration to future work.
\end{remark}

% \begin{assumption}[Realizability and bounded parameter estimation error]\label{as:realizability-bounded-error}
% (Realizability) There exists a true parameter $\Theta^\star=(\theta^\star,w^\star)\in\bR^{d_\theta+d_w}$
% such that the data-generating POMDP uses transition kernel $P_{\theta^\star}$ and observation model $\Phi_{w^\star}$.
% \medskip
% \noindent
% (Bounded estimation error) Fix a radius $r_\Theta>0$ and assume the estimated parameter
% $\Theta=(\theta,w)$ satisfies
% \(\Theta \in \bB_2(\Theta^\star,r_\Theta)
% :=\bigl\{\Theta'\in\bR^{d_\theta+d_w}:\ \|\Theta'-\Theta^\star\|_2\le r_\Theta\bigr\}.
% \)
% Equivalently, \(\|\Theta-\Theta^\star\|_2^2=\|\theta-\theta^\star\|_2^2+\|w-w^\star\|_2^2 \le r_\Theta^2.\)
% We will use the shorthand
% \(\delta(\theta) := \|\theta-\theta^\star\|_2,\,\delta(w) := \|w-w^\star\|_2,
% \)
% so that $\delta(\theta)^2+\delta(w)^2\le r_\Theta^2$.
% \end{assumption}

\begin{assumption}[Realizability and local parameter error]\label{as:realizability-bounded-error}
There exists a true parameter \(\Theta^\star=(\theta^\star,w^\star)\in\mathbb{R}^{d_\theta+d_w}\)
such that the data-generating POMDP has transition kernel \(P_{\theta^\star}\) and observation model \(\Phi_{w^\star}\).

Fix \(r_\Theta>0\), and suppose the learned parameter \(\Theta=(\theta,w)\) satisfies
\[
\Theta \in \mathbb{B}_2(\Theta^\star,r_\Theta)
:= \{\Theta' \in \mathbb{R}^{d_\theta+d_w} : \|\Theta'-\Theta^\star\|_2 \le r_\Theta\}.
\]
Equivalently,
\[
\|\Theta-\Theta^\star\|_2^2
=
\|\theta-\theta^\star\|_2^2 + \|w-w^\star\|_2^2
\le r_\Theta^2.
\]
We write
\[
\delta(\theta):=\|\theta-\theta^\star\|_2,
\qquad
\delta(w):=\|w-w^\star\|_2,
\]
so that \(\delta(\theta)^2+\delta(w)^2\le r_\Theta^2\).
\end{assumption}

Given trajectories of observations and actions, a natural way to estimate the model parameter $\Theta=(\theta,w)$ in this class is via maximum-likelihood or an EM-type procedure. Fix a horizon $T\in\bN$. For a single trajectory, let the latent state sequence be
$Z := (s_{0:T})\in\bS^{T+1}$ and let the observation sequence be
$Y := (\hat s_{1:T},\, a_{0:T-1})\in \hat\bS^{T}\times\bA^{T}$.
We treat the action sequence $a_{0:T-1}$ as observed and work with the likelihood conditional on actions.
Under $\Theta=(\theta,w)$, the joint conditional trajectory likelihood factorizes as
\begin{align}\label{eq:traj-joint}
P_\Theta(Y,Z)
&=\nu_0(s_0)\,
\prod_{k=0}^{T-1} P_\theta(s_{k+1}\mid s_k,a_k)\,
\prod_{k=1}^{T}\Phi_w(\hat s_k\mid s_k).
\end{align}
Given a collection of trajectories, one may estimate $\Theta$ by maximizing the marginal likelihood $P_\Theta(Y)=\sum_Z P_\Theta(Y,Z),$ for example, via EM. Under standard local identifiability and regularity conditions (e.g., strong concavity/smoothness of the standard EM auxiliary objective, i.e., the expected complete-data log-likelihood in a neighborhood of $\Theta^\star$), EM-type procedures admit local linear convergence to $\Theta^\star$ at the population level, with corresponding finite-sample perturbation bounds \citep{HMMCappe, WANG20062001, van2008hidden, balakrishnan2017statistical}.

Given the observation--action history up to time $t$,
\(Y_t := (\hat s_{1:t},\, a_{0:t-1}),\) the corresponding belief is defined as
\[
b_t^\Theta(s)\;:=\;\bP_\Theta(s_t=s \mid Y_t)\in\Delta_{\bS},
\qquad b_0^\Theta(s)=\nu_0(s).
\]
For parameters $\Theta=(\theta,w)$, the Bayesian filter update is
\begin{align}\label{eq:belief-formula}
b_{t+1}^{\Theta}(s')
:=\frac{\Phi_w(\hat s_{t+1} \mid s')\sum_{s\in\bS}P_\theta(s'\mid s,a_t)\,b_t^{\Theta}(s)}
{\sum_{\bar s\in\bS}\Phi_w(\hat s_{t+1}\mid \bar s)\sum_{s\in\bS}P_\theta(\bar s\mid s,a_t)\,b_t^{\Theta}(s)}.
\end{align}
Moreover, the induced belief-MDP reward can be written as
\[
r_b(b_t^\Theta,a_t)
:=\bE_{s_t\sim b_t^\Theta}\!\big[r(s_t,a_t)\big]
=\sum_{s\in\bS} b_t^\Theta(s)\,r(s,a_t).
\]
The recursion Eq.~\ref{eq:belief-formula} defines a nonlinear operator on $\Delta_{\bS}$ obtained by composing a prediction step (via $P_\theta$) with a Bayes reweighting step (via $\Phi_w$). In general, this operator can amplify small perturbations in the model parameters or in the current belief over time. To obtain quantitative \emph{stability} guarantees for the belief process, we impose an explicit \emph{mixing} criterion that controls how strongly transitions forget the previous latent states. This is captured by the following Dobrushin coefficients, which will serve as the key constants governing one-step contraction and the resulting stability rate.

\medskip

\begin{definition}[Dobrushin coefficient]\label{def:dobrushin-controlled}
Let $P_\theta(\cdot\mid s,a)\in\Delta_{\bS}$ be the transition kernel and $\Phi_w\in\Delta_{\hat\bS}$ the observation model. Define the uniform Dobrushin coefficient of the models over the feasible choice of parameter set by

%$\bB_2(\theta^\star,r_\Theta)\times \bB_2(w^\star,r_\Theta)$, $s,s'\in\bS$ and $a\in \bA$

\[
\kappa_P:=\inf_{\theta}\inf_{s,s',a}
\sum_{x\in\bS}\min \{P_\theta(x\mid s,a),P_\theta(x\mid s',a)\},\qquad \kappa_\Phi := \inf_{w}\inf_{s,s'\in\bS}
\sum_{\hat s\in\hat\bS}\min\{\Phi_w(\hat s\mid s), \Phi_w(\hat{s}\mid s')\}.
\]

Equivalently, $$\kappa_P=1-\sup_{\theta}\sup_{a,s,s'}
\frac12\|P_\theta(\cdot\mid s,a)-P_\theta(\cdot\mid s',a)\|_1.$$
\end{definition}

\begin{assumption}[Dobrushin stability]\label{as:Dobrushin-stability}
Let \(\kappa_P,\kappa_\Phi\in(0,1]\) be the uniform Dobrushin coefficients in Definition~\ref{def:dobrushin-controlled}. Assume that
\[
\alpha := (1-\kappa_P)(4-3\kappa_\Phi) < 1.
\]
\end{assumption}

\begin{remark}[Role of \(\alpha\)]\label{rem:alpha-control}
In the log-linear model, Lemma~\ref{lem:Dobrushin-row-col} yields the explicit lower bounds
\[
\kappa_P\ge \frac{|\bS|}{1+(|\bS|-1)\exp\bigl(2B(\|\theta^\star\|_2+r_\Theta)\bigr)},\qquad
\kappa_\Phi\ge \frac{|\hat\bS|}{1+(|\hat\bS|-1)\exp\bigl(2B(\|w^\star\|_2+r_\Theta)\bigr)}.
\] Therefore, $\alpha$ can be controlled by the feature bound $B$ and the feasible radii; for example, by introducing temperature parameters to control the magnitude of $B$, one can effectively reduce $\alpha$. This condition is a non-trivial stability requirement since the Bayesian filtering update need not be a contraction in general (see Example 3.3, \cite{Mcdonald2024StochasticCriteria}). 

\paragraph{Practical note.}
Although $\kappa_P$ and $\kappa_\Phi$ are worst-case quantities, in practice rather than plugging in analytic lower bounds
Eq.~\ref{eq:dobrushin-row} and Eq.~\ref{eq:dobrushin-Phi} that must hold uniformly over all $(s,s',a)$ in the feasible parameter set, we can estimate $\kappa_P$ and $\kappa_\Phi$ directly from the realized transition and observation kernels. This yields a substantially less pessimistic stability certificate and better matches the observed contraction. 
 
To show exponential filter stability in expectation for an \emph{incorrectly initialized} POMDP, \citet{Mcdonald2024StochasticCriteria} assumed $(1-\kappa_P)(2-\kappa_\Phi)<1$. Building on this result, we show filter stability in expectation under POMDP model parameter perturbation. We set the assumption $(1-\kappa_P)(4-3\kappa_\Phi)<1$ in Theorem \ref{thm:param-belief-lip}. The stronger $(4-3\kappa_\Phi)$ factor arises because model mismatch injects fresh error at every step, as shown by terms (I)-(II) in Appendix \ref{proof-thm-belief}, and not just an initial discrepancy. This assumption is a \emph{sufficient} technical bridge from POMDP model-mismatch to robust preference-based reward learning, as formalized in Theorem \ref{thm:main-mu}. We emphasize that this assumption is not claimed to be a \emph{necessary} condition for the relation to hold; empirical performance may remain robust even when these worst-case mixing bounds are not strictly satisfied.
\end{remark}

\noindent
The next section quantifies this effect by proving that, under $\alpha<1$, the belief mapping $\Theta\mapsto b_t^\Theta$ is Lipschitz in expectation around $\Theta^\star$ (Theorem~\ref{thm:param-belief-lip}). This stability bound is the key technical input for transferring POMDP parameter error into controlled feature perturbations in the downstream RLHF analysis (Theorem~\ref{thm:main-mu}).
\medskip

%%%%%%%%%%%%%%%%%%%%%%%%%%%%%%%%%%%%%%%%%%%%%%%%%%%%%%%%%%%%%%%%%%%%%%%%%%%%
\section{Belief Stability Under Model Mismatch}
\label{sec:belief-stability}

Let the true parameters be \(\Theta^\star=(\theta^\star,w^\star)\) and the estimate
\(\Theta=(\theta,w)\), living in a compact feasible parameter set as in
Assumption~\ref{as:realizability-bounded-error}.

\label{subsec:belief-expectation}

\begin{theorem}\label{thm:param-belief-lip}
Consider the POMDP with log-linear dynamics, under Assumptions
\ref{as:Model-LogLinear}, \ref{as:realizability-bounded-error}, \ref{as:Dobrushin-stability}.
Fix an action sequence $(a_0,\dots,a_{t-1})$, estimated model parameters $\Theta=(\theta,w)$, true parameters $\Theta^\star=(\theta^\star,w^\star)$ and corresponding belief processes  $(b_k^\Theta)_{k=0}^t$ and $(b_k^{\Theta^\star})_{k=0}^t$ with priors $b_0^\Theta=b_0^{\Theta^\star}=\nu_0\in\Delta_{\bS}$.
Then, the mapping from parameters $\Theta =(\theta, w)\to b_t^\Theta$ is Lipschitz at $\Theta^\star$ in expectation. In particular, for every $t\ge1$,
\begin{align}\label{eq:belief-lipschitz}
\bE\Big[\|b_{t}^\Theta-b_{t}^{\Theta^\star}\|_{\mathrm{TV}}\Big]
\le \frac{(1-\alpha^{t})}{1-\alpha}c_b\,(\delta(\theta),\delta(w)),
\end{align}
where $c_b\, (\delta(\theta),\delta(w)):=B\bigl(\,\delta(w)+\frac{(3-2\kappa_\Phi)}{2}\,\delta(\theta)\bigr),$ and the expectation is over $\hat s_{1:t}\sim P_{\Theta^\star}(\,\cdot\,|a_{0:t-1}).$
\end{theorem}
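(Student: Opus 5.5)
The plan is a one-step recursion for $e_k:=\bE\|b_k^\Theta-b_k^{\Theta^\star}\|_{\mathrm{TV}}$ of the form $e_{k+1}\le \alpha e_k + c_b(\delta(\theta),\delta(w))$ with $e_0=0$. Unrolling it gives $e_t\le c_b\sum_{j=0}^{t-1}\alpha^j=\frac{1-\alpha^t}{1-\alpha}c_b$, which is \eqref{eq:belief-lipschitz}.

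To obtain the recursion, write the filter as $b_{k+1}^\Theta=F_{\theta,w}(b_k^\Theta;a_k,\hat s_{k+1})$. Insert the hybrid belief $\tilde b_{k+1}:=F_{\theta^\star,w^\star}(b_k^\Theta;a_k,\hat s_{k+1})$, i.e.\ the true filter applied to the approximate prior. The triangle inequality gives
\[
\|b_{k+1}^\Theta-b_{k+1}^{\Theta^\star}\|_{\mathrm{TV}}\le \underbrace{\|F_{\theta,w}(b_k^\Theta)-F_{\theta^\star,w^\star}(b_k^\Theta)\|_{\mathrm{TV}}}_{\text{(I)+(II)}}+\underbrace{\|F_{\theta^\star,w^\star}(b_k^\Theta)-F_{\theta^\star,w^\star}(b_k^{\Theta^\star})\|_{\mathrm{TV}}}_{\text{(III)}} .
\]

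\textbf{Term (III).} This is the stability of the true filter with respect to its prior. Take conditional expectation over $\hat s_{k+1}\sim P_{\Theta^\star}(\cdot\mid Y_k,a_k)$, where the predictive is built from $b_k^{\Theta^\star}$. The stochastic Dobrushin argument of \citet{Mcdonald2024StochasticCriteria} then contracts it to $(1-\kappa_P)(2-\kappa_\Phi)\|b_k^\Theta-b_k^{\Theta^\star}\|_{\mathrm{TV}}$ in expectation. Concretely, the prediction step contracts by $(1-\kappa_P)$, and the Bayes step, averaged over the true observation law, expands by at most $(2-\kappa_\Phi)$.

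\textbf{Term (I)+(II).} Here the prior is the same for both filters, so only the model differs. Split it into a prediction mismatch (I), from $P_\theta$ versus $P_{\theta^\star}$, and a correction mismatch (II), from $\Phi_w$ versus $\Phi_{w^\star}$. For the log-linear softmax, the map $\theta\mapsto P_\theta(\cdot\mid s,a)$ satisfies $\|P_\theta-P_{\theta^\star}\|_{\mathrm{TV}}\le B\,\delta(\theta)$, since the gradient of log-softmax is a centered feature bounded by $2B$ and TV carries the factor $1/2$. Similarly $\|\Phi_w-\Phi_{w^\star}\|_{\mathrm{TV}}\le B\,\delta(w)$. A prediction error of size $B\delta(\theta)$ is then pushed through the true Bayes correction, which expands it by at most $(2-\kappa_\Phi)$ in expectation. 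The observation-likelihood mismatch contributes $B\delta(w)$ after using the normalization. Obtaining exactly the constant $\tfrac{3-2\kappa_\Phi}{2}$ on $\delta(\theta)$ requires care in combining these pieces.

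\textbf{Main obstacle.} The difficulty is the expectation. Observations are drawn from the true predictive given $b_k^{\Theta^\star}$, but the Bayes-step expansion bound of McDonald--Yüksel is tight only when averaging over the predictive of the belief being updated. Here the filter is applied to $b_k^\Theta$ and to a mismatched prediction. The plan is to change measure between the two predictive observation laws. Their TV discrepancy is itself bounded by $(1-\kappa_\Phi)$ times the predicted-belief discrepancy plus the model error, and this extra term must be absorbed. It is what inflates the factor $(2-\kappa_\Phi)$ to $(4-3\kappa_\Phi)=(2-\kappa_\Phi)+2(1-\kappa_\Phi)$ in $\alpha$. If the resulting constants come out worse, I would still obtain the stated form with $\alpha$ redefined, then recheck the split to match $c_b$. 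Finally, apply the tower property to get $e_{k+1}\le\alpha e_k+c_b$ and unroll.
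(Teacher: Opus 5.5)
Your plan follows the paper's proof. Your hybrid $\tilde b_{k+1}$ is the paper's intermediate belief $\psi_{\Phi_{w^\star}}(P_{\theta^\star}^{a_k}b_k^\Theta,\hat s_{k+1})$. The mismatch part is split through $\psi_{\Phi_{w^\star}}(P_\theta^{a_k}b_k^\Theta,\hat s_{k+1})$, as your "push the prediction error through the true Bayes correction" indicates. The transition-mismatch and propagation terms use the expected $(2-\kappa_\Phi)$ Bayes expansion (Lemma~3.2 of \citet{Mcdonald2024StochasticCriteria}). Both mismatch terms need a change of measure away from $Q_{w^\star}(P_{\theta^\star}^{a_k}b_k^{\Theta^\star})$, each costing $(1-\kappa_\Phi)(1-\kappa_P)\|b_k^\Theta-b_k^{\Theta^\star}\|_{\mathrm{TV}}$, plus $(1-\kappa_\Phi)$ times the transition error in the observation-mismatch term. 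That is exactly how $\alpha=(1-\kappa_P)\bigl[(2-\kappa_\Phi)+2(1-\kappa_\Phi)\bigr]$ arises, so your $\alpha$ is already right and redefining it would not help.

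The gap is in $c_b$, and it is not just a matter of careful combination: your kernel bound is a factor $2$ too weak. Bounding the centered feature by $\|\phi_p(s,a,s')-\bar\phi_p\|_2\le 2B$ gives only $\|P_\theta(\cdot\mid s,a)-P_{\theta^\star}(\cdot\mid s,a)\|_{\mathrm{TV}}\le B\delta(\theta)$. Through your own bookkeeping, the transition-mismatch term then gives $(2-\kappa_\Phi)B\delta(\theta)$. The change of measure in the observation-mismatch term adds $(1-\kappa_\Phi)B\delta(\theta)$. The total is $(3-2\kappa_\Phi)B\delta(\theta)$, twice the target.

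The observation term doubles as well. The pointwise Bayes perturbation is at most $\sum_{s'}b(s')\,|\Phi_w(\hat s\mid s')-\Phi_{w^\star}(\hat s\mid s')|/(Q_{w^\star}b)(\hat s)$, with numerator and normalizer each contributing half. Averaging over $\hat s\sim Q_{w^\star}b$ cancels the denominator but leaves $\sup_{s'}\|\Phi_w(\cdot\mid s')-\Phi_{w^\star}(\cdot\mid s')\|_1$, which is twice the row TV error. So your TV bound $B\delta(w)$ yields $2B\delta(w)$, not $B\delta(w)$. As written, the plan proves the inequality with $2c_b$ in place of $c_b$.

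The missing ingredient is the sharper row estimate of Lemma~\ref{lem:model-lip}. For $\|v\|_2=1$, let $X:=\phi_p(s,a,S')^\top v$ with $S'\sim P_\theta(\cdot\mid s,a)$. Then
$\sum_{s'}\bigl|\nabla_\theta P_\theta(s'\mid s,a)^\top v\bigr|=\bE|X-\bE X|\le\sqrt{\mathrm{Var}(X)}\le B$.
Integrating along the segment gives $\|P_\theta(\cdot\mid s,a)-P_{\theta'}(\cdot\mid s,a)\|_1\le B\|\theta-\theta'\|_2$, i.e.\ a TV error of $\tfrac B2\delta(\theta)$. Identically, $\|\Phi_w(\cdot\mid s)-\Phi_{w'}(\cdot\mid s)\|_1\le B\delta(w)$. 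With these two inputs your three-term recursion gives exactly $B\delta(w)+\tfrac B2\bigl[(2-\kappa_\Phi)+(1-\kappa_\Phi)\bigr]\delta(\theta)=c_b(\delta(\theta),\delta(w))$, and unrolling from $e_0=0$ finishes the proof.
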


\paragraph{Proof sketch.}
The proof decomposes one-step belief error into (I) observation-model perturbation, (II) transition-model perturbation, and (III) propagation under filter mixing, then unrolls the resulting recursion. The detailed proof is provided in Appendix \ref{proof-thm-belief}.

\begin{proof}
The full proof is given in Appendix \ref{proof-thm-belief}.
\end{proof}

\begin{remark}[Error in belief stability]\label{rem:belief-error-decomp}
The bound in Eq.~\ref{eq:belief-lipschitz} splits into a \emph{time-propagation} factor and a
\emph{one-step mismatch} term:
\begin{align*}
\bE\!\left[\|b_t^\Theta-b_t^{\Theta^\star}\|_{\mathrm{TV}}\right]
\;\le\;\underbrace{\frac{1-\alpha^t}{1-\alpha}}_{\text{propagation over time}}
\;\times\;\underbrace{\Big(B\,\delta(w)+\tfrac{B}{2}(3-2\kappa_\Phi)\,\delta(\theta)\Big)}_{\text{model mismatch}}.
\end{align*}

When $\alpha<1$, the propagation factor is uniformly bounded by $(1-\alpha)^{-1}$, so the
belief error scales linearly with the parameter deviations $\delta(w)$ and $\delta(\theta)$.
\end{remark}
\noindent

\begin{remark}[Filter stability challenges]
\label{rem:stability-context}
Analyzing filter stability over long horizons is notoriously difficult because the Bayesian update operator $\psi$ is generally \emph{not} a contraction and can expand the total variation distance between beliefs (see \citet{Mcdonald2024StochasticCriteria}, Example 3.3). Standard results often impose restrictive strong mixing conditions on the transition kernel $P(\cdot|\cdot,\cdot)$ and assume that it is sufficiently ergodic \cite{VanHandel2008}. Using the Dobrushin coefficient $\kappa_\Phi$ of the observation model can account for the \emph{joint} contraction properties of the transition and measurement steps, allowing for stability even when the transition kernel alone is not sufficiently mixing. The price for this generality is that the contraction holds in \emph{expectation} rather than almost surely. Moreover, while \citet{Mcdonald2024StochasticCriteria} establishes stability with respect to incorrect \emph{initialization}, our Theorem~\ref{thm:param-belief-lip} extends this machinery to the learning setting. We establish stability with respect to \emph{parametric model mismatch} ($\Theta$ vs.\ $\Theta^\star$), deriving explicit perturbation bounds specific to the log-linear family. This quantifies how parameter error injects bias at every step (terms (I) and (II) in Eq.~\ref{eq:belief-theorem-term-I} in Appendix \ref{proof-thm-belief}), a distinct challenge from the pure initialization decay studied in classical filtering literature.
\end{remark}

Theorem~\ref{thm:param-belief-lip} controls the belief error in \emph{expectation} at a fixed time $t$ under model mismatch. For the RLHF reduction we will need a \emph{uniform-in-time} control along a trajectory, since downstream feature and preference observations depend on the entire history. We therefore convert the per-step expected contraction into a \emph{high-probability bound} on the \emph{time-average} belief error.

\begin{definition}[Neural-softmax transition and observation models]
\label{def:nn-softmax-model}
Let the transition and observation kernels be parameterized by neural-network scores
\begin{align}\label{eq:nn-softmax-transition}
P_{W_p}(s' \mid s,a)
&=\frac{\exp\{F_p(s,a,s';W_p)\}}{\sum_{\bar s\in\bS}\exp\{F_p(s,a,\bar s;W_p)\}}\,\,,\qquad \Phi_{W_\Phi}(\hat s \mid s)=\frac{\exp\{F_\Phi(s,\hat s;W_\Phi)\}}
{\sum_{\bar s\in\hat\bS}\exp\{F_\Phi(s,\bar s;W_\Phi)\}}
\end{align}

We take both score functions to be two-layer ReLU networks with width \(m\). 
\end{definition}

\begin{corollary}[Belief perturbation bound for neural-softmax models]
\label{cor:nn-param-belief-lip}
Consider the POMDP introduced earlier with neural-softmax transition and observation models in Definition~\ref{def:nn-softmax-model}, under Assumption~\ref{as:nn-model-linearization}. Let \(W=(W_p,W_\Phi)\in\mathcal D\) be an estimated neural parameter and let \(W^\star=(W_p^\star,W_\Phi^\star)\in\mathcal D\) be the true parameter generating the POMDP. Consider an action sequence \((a_0,\dots,a_{t-1})\) and the corresponding belief processes \((b_k^W)_{k=0}^t\) and \((b_k^{W^\star})_{k=0}^t\), initialized from the same prior \(b_0^W=b_0^{W^\star}=\nu_0\in\Delta_{\bS}\). Let \(\kappa_P^{\mathrm{NN}}\) and \(\kappa_\Phi^{\mathrm{NN}}\) be the uniform neural Dobrushin coefficients from Lemma~\ref{lem:nn-Dobrushin-row-col}, and assume
\(
\alpha_{\mathrm{NN}}:=(1-\kappa_P^{\mathrm{NN}})(4-3\kappa_\Phi^{\mathrm{NN}})<1.
\)
For any \(\delta_{\mathrm{NN}}\in(0,1)\), on the event $\mathcal E_{\mathrm{lin}}^{\mathrm{NN}}(\delta_{\mathrm{NN}})$, and for every \(t\ge1\) it holds
\begin{align}\label{eq:nn-belief-lipschitz}
\bE\Big[\|b_t^W-b_t^{W^\star}\|_{\mathrm{TV}}\Big]
\le\frac{1-\alpha_{\mathrm{NN}}^t}{1-\alpha_{\mathrm{NN}}}\,
c_b^{\mathrm{NN}}(W,W^\star;\delta_{\mathrm{NN}}),
\end{align}
where the expectation is taken with respect to \(\hat s_{1:t}\sim P_{W^\star}(\,\cdot\,|a_{0:t-1})\), and
\begin{align}\label{eq:cb-nn-def}
c_b^{\mathrm{NN}}(W,W^\star;\delta_{\mathrm{NN}})
&:=B^{\mathrm{NN}}\|W_\Phi-W_\Phi^\star\|_F
+2\varepsilon_\Phi^{\mathrm{NN}}(m,\,\delta_{\mathrm{NN}})\\
&\qquad\qquad+(3-2\kappa_\Phi^{\mathrm{NN}})
\left(\frac{B^{\mathrm{NN}}}{2}\|W_p-W_p^\star\|_F
+\varepsilon_{p}^{\mathrm{NN}}(m,\,\delta_{\mathrm{NN}})\right).
\end{align}
In particular, as \(m\to\infty\), this recovers a Lipschitz-type belief perturbation bound in the neural parameters.
\end{corollary}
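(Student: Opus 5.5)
The plan is to rerun the proof of Theorem~\ref{thm:param-belief-lip} with the neural-softmax kernels in place of the log-linear ones. That proof uses the log-linear structure in only one place: the bounds on the one-step kernel perturbations that drive terms (I) and (II). The contraction step (III) uses only the Dobrushin coefficients. So the first step is to isolate, from Appendix~\ref{proof-thm-belief}, the abstract recursion
\[
\bE\big[\|b_{k+1}-b^\star_{k+1}\|_{\mathrm{TV}}\big]\le \alpha\,\bE\big[\|b_k-b_k^\star\|_{\mathrm{TV}}\big] + \Delta_\Phi + \tfrac{3-2\kappa_\Phi}{2}\,\Delta_P .
\]
Here $\Delta_\Phi$ bounds $\sup_s\|\Phi(\cdot\mid s)-\Phi^\star(\cdot\mid s)\|_{1}$ and $\Delta_P$ bounds $\sup_{s,a}\|P(\cdot\mid s,a)-P^\star(\cdot\mid s,a)\|_{1}$, with the normalisations of the proof. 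In the log-linear case these are $B\delta(w)$ and $B\delta(\theta)$. I also replace $\kappa_P,\kappa_\Phi$ by $\kappa_P^{\mathrm{NN}},\kappa_\Phi^{\mathrm{NN}}$ from Lemma~\ref{lem:nn-Dobrushin-row-col}. Because these coefficients are uniform over $\mathcal D$, the contraction factor becomes $\alpha_{\mathrm{NN}}$ unchanged. Unrolling the recursion from $b_0=b_0^\star=\nu_0$ then gives the geometric factor $(1-\alpha_{\mathrm{NN}}^t)/(1-\alpha_{\mathrm{NN}})$.

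The second step bounds the softmax perturbations for neural scores. For softmax kernels with score vectors $F$ and $F^\star$, the mean value theorem for the log-partition function gives
\[
\|\mathrm{softmax}(F)-\mathrm{softmax}(F^\star)\|_1\le 2\|F-F^\star\|_\infty .
\]
The constant should be chosen to match the normalisation used for $B\delta$ in the log-linear case. On the event $\mathcal E_{\mathrm{lin}}^{\mathrm{NN}}(\delta_{\mathrm{NN}})$ of Assumption~\ref{as:nn-model-linearization}, I decompose
\[
F(W)-F(W^\star)=\big[F(W)-F^{\mathrm{lin}}(W)\big]+\big[F^{\mathrm{lin}}(W)-F^{\mathrm{lin}}(W^\star)\big]+\big[F^{\mathrm{lin}}(W^\star)-F(W^\star)\big],
\]
where $F^{\mathrm{lin}}$ is the NTK linearization at initialization. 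The middle term equals $\langle\nabla F(W_0),W-W^\star\rangle$ and is bounded by $B^{\mathrm{NN}}\|W-W^\star\|_F$ via the gradient-feature bound. The two outer terms are each bounded by the finite-width linearization error $\varepsilon^{\mathrm{NN}}(m,\delta_{\mathrm{NN}})$. This gives
\[
\Delta_\Phi\le B^{\mathrm{NN}}\|W_\Phi-W_\Phi^\star\|_F+2\varepsilon_\Phi^{\mathrm{NN}},\qquad
\Delta_P\le B^{\mathrm{NN}}\|W_p-W_p^\star\|_F+2\varepsilon_p^{\mathrm{NN}} .
\]
Substituting these into the recursion yields exactly $c_b^{\mathrm{NN}}$, since $\tfrac{3-2\kappa}{2}\cdot 2\varepsilon_p=(3-2\kappa)\varepsilon_p$.

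The main obstacle is the constant bookkeeping in the first step. I need to confirm that terms (I) and (II) in Appendix~\ref{proof-thm-belief} depend on the model only through sup-norm score differences, and that the Bayes-normalisation step does not use log-linearity. For example, the proof might bound a likelihood ratio via $\exp(B\delta)$ rather than through differences. If it does, I would redo that sub-step using the ratio bound $\Phi/\Phi^\star\le \exp(2\|F_\Phi-F_\Phi^\star\|_\infty)$. Since $\|F_\Phi-F_\Phi^\star\|_\infty$ is controlled on the linearization event, the same linear-order bound should follow.

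A secondary point is that the expectation is over $\hat s_{1:t}\sim P_{W^\star}$, while $\mathcal E_{\mathrm{lin}}^{\mathrm{NN}}$ concerns only the random initialization. The statement should therefore be read as conditional on the initialization, with the event fixed and the expectation taken only over observations; no independence issue arises.

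The final claim follows because, under Assumption~\ref{as:nn-model-linearization}, $\varepsilon^{\mathrm{NN}}_{\Phi},\varepsilon^{\mathrm{NN}}_{p}\to0$ as $m\to\infty$. This leaves a bound linear in $\|W-W^\star\|_F$.
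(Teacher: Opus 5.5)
Your proposal is essentially the paper's proof. The (I)/(II)/(III) decomposition of Theorem~\ref{thm:param-belief-lip} uses the model only through row-wise $\ell_1$ kernel perturbations and the Dobrushin coefficients. So substituting neural row-wise bounds, obtained through the NTK linearization, together with $\kappa_P^{\mathrm{NN}},\kappa_\Phi^{\mathrm{NN}}$, and unrolling gives the result. You bound the combined score gap $\|F(W)-F(W^\star)\|_\infty\le B^{\mathrm{NN}}\|W-W^\star\|_F+2\varepsilon^{\mathrm{NN}}$ first and apply one softmax step. The paper's Lemma~\ref{lem:nn-kernel-lip} instead applies the triangle inequality along $P_W\to P_W^{\mathrm{lin}}\to P_{W^\star}^{\mathrm{lin}}\to P_{W^\star}$. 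Both give the same constants.

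One step must be corrected. The softmax bound you state, $\|\mathrm{softmax}(F)-\mathrm{softmax}(F^\star)\|_1\le 2\|F-F^\star\|_\infty$, combined with your score-gap bound gives $2B^{\mathrm{NN}}\|W_\Phi-W_\Phi^\star\|_F+4\varepsilon_\Phi^{\mathrm{NN}}$. That is twice the $\Delta_\Phi$ you then assert, and it would double $c_b^{\mathrm{NN}}$. You need constant $1$, and it does hold, but the mean value theorem must be applied to the softmax map itself, not to the log-partition. Applying it to the log-partition only yields a log-ratio bound $2\|F-F^\star\|_\infty$, which is where your factor $2$ comes from.

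The softmax Jacobian at $p$ sends $v$ to $p\odot(v-\langle p,v\rangle\mathbf 1)$. Its $\ell_1$ norm is $\sum_i p_i\lvert v_i-\langle p,v\rangle\rvert$, which by Cauchy--Schwarz is at most $\bigl(\sum_i p_i(v_i-\langle p,v\rangle)^2\bigr)^{1/2}\le\|v\|_\infty$. This is exactly the variance computation in Lemma~\ref{lem:model-lip}. Integrating along the segment from $F^\star$ to $F$ gives $\|\mathrm{softmax}(F)-\mathrm{softmax}(F^\star)\|_1\le\|F-F^\star\|_\infty$, i.e.\ a total-variation bound of $\tfrac12\|F-F^\star\|_\infty$. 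With this, your substitution produces exactly $c_b^{\mathrm{NN}}$.

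Your fallback via likelihood ratios is unnecessary. Eq.~\ref{eq:bayes-op-param-perturb}--\ref{eq:exp-denominator-eliminate} bound term (I) using only kernel differences, and the Bayes normalizer cancels once the expectation is taken under $Q_{w^\star}b$. A ratio bound of the form $e^{2\|F_\Phi-F_\Phi^\star\|_\infty}$ would not reproduce the stated constants anyway.
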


\begin{proof}
The full proof is given in Appendix \ref{proof:belief-lip-nn}.
\end{proof}

\paragraph{Proof sketch.}
The proof follows the same one-step decomposition as Theorem~\ref{thm:param-belief-lip}. The only changes are that the log-linear row-wise kernel perturbation bounds are replaced by Lemma~\ref{lem:nn-kernel-lip}, and the Dobrushin constants are replaced by the neural constants \(\kappa_P^{\mathrm{NN}}\) and \(\kappa_\Phi^{\mathrm{NN}}\) from Lemma~\ref{lem:nn-Dobrushin-row-col}. The resulting recursion has additive term \(c_b^{\mathrm{NN}}(W,W^\star;\delta_{\mathrm{NN}})\) and contraction coefficient \(\alpha_{\mathrm{NN}}\).

\subsection{High-probability time-average belief error}
\label{subsec:belief-highprob}

The expectation bounds in Theorems~\ref{thm:param-belief-lip} and Corollary ~\ref{cor:nn-param-belief-lip} control the belief mismatch at each fixed time. For preference-based reward learning, however, the relevant objects are trajectory-level feature sums, and hence the error
depends on the accumulated belief mismatch along the whole rollout. Therefore, it suffices to control the \emph{time-average} belief error. The following corollary converts the one-step
stability recursion into a high-probability time-average bound via a martingale
argument. This bound will later be union-bounded over all trajectory pairs in the
preference dataset to define a belief-accuracy event for the reward learning analysis.

\begin{corollary}\label{cor:belief-good-condition-cor}
Consider the setting of Theorem \ref{thm:param-belief-lip} and fix $\Theta=(\theta,w)\in\bB_2(\Theta^\star,r_\Theta)$ such that $\|\theta-\theta^\star\|_2\le\delta(\theta)$ and $\|w-w^\star\|_2\le\delta(w)$. For $t=0,1,\dots,T-1$ consider the random variables
\(
X_t :=\bigl\|b_t^\Theta-b_t^{\Theta^\star}\bigr\|_{\mathrm{TV}}.\) For the filtrations $\{\mathcal F_t^-\}_{t\ge 0}$ and $\{\mathcal F_t\}_{t\ge 0}$ defined by
\(
\mathcal F_t^- := \sigma(\hat s_{1:t},a_{0:t-1}),\mathcal F_t := \sigma(\hat s_{1:t},a_{0:t}),
\)
with $\mathcal F_{-1}^-:=\sigma(\varnothing,\Omega)$, the random variable $X_t$ is $\mathcal F_t^-$-measurable.  For any $\delta_b\in(0,1)$, with probability at least $1-\delta_b$,

\begin{align}\label{eq:hp-belief-freedman}
\frac{1}{T}\sum_{t=0}^{T-1}X_t &\le \frac{2c_b\,(\delta(\theta),\delta(w))}{1-\alpha} + \frac{2}{1-\alpha}\sqrt{\frac{2 c_b\,(\delta(\theta),\delta(w)) \log(1/\delta_b)}{T}}\notag\\
&\qquad\qquad+ \frac{\log(1/\delta_b)}{T(1-\alpha)}\left(\frac{4}{3} + \frac{2\alpha}{1-\alpha}\right)\\ &=: \epsilon_b(\delta_b).
\end{align}

Moreover, the same statement holds for the neural-softmax model of
Theorem~\ref{cor:nn-param-belief-lip}. In particular, for any
\(\delta_{\mathrm{NN}}\in(0,1)\), on the linearization event
\(\mathcal E_{\mathrm{lin}}^{\mathrm{NN}}(\delta_{\mathrm{NN}})\) from
Definition~\ref{def:nn-linearized-scores}, if \(
X_t^{\mathrm{NN}}:=\bigl\|b_t^W-b_t^{W^\star}\bigr\|_{\mathrm{TV}},
\) and if \(\alpha_{\mathrm{NN}}<1\), then with conditional probability at least \(1-\delta_b\), equivalently with joint probability at least \(1-(\delta_b+\delta_{\mathrm{NN}})\),

\begin{align}\label{eq:hp-belief-freedman-nn}
\frac{1}{T}\sum_{t=0}^{T-1}X_t^{\mathrm{NN}}
&\le \frac{2c_b^{\mathrm{NN}}(W,W^\star;\delta_{\mathrm{NN}})}{1-\alpha_{\mathrm{NN}}} +\frac{2}{1-\alpha_{\mathrm{NN}}}\sqrt{
\frac{2c_b^{\mathrm{NN}}(W,W^\star;\delta_{\mathrm{NN}})\log(1/\delta_b)}{T}}
\notag\\
&\qquad\qquad+\frac{\log(1/\delta_b)}{T(1-\alpha_{\mathrm{NN}})}
\left(\frac{4}{3} +\frac{2\alpha_{\mathrm{NN}}}{1-\alpha_{\mathrm{NN}}}\right)\\
&=:\epsilon_b^{\mathrm{NN}}(\delta_b,\delta_{\mathrm{NN}}),
\end{align}
where \(c_b^{\mathrm{NN}}(W,W^\star;\delta_{\mathrm{NN}})\) is defined in Eq.~\ref{eq:cb-nn-def}.
\end{corollary}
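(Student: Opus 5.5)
We plan to derive the bound from the one-step stability recursion behind Theorem~\ref{thm:param-belief-lip}, rather than from its unrolled form, and then apply Freedman's inequality to a martingale difference sequence built from the $X_t$. Write $c_b:=c_b(\delta(\theta),\delta(w))$. The proof of Theorem~\ref{thm:param-belief-lip} gives a conditional one-step inequality of the form
\[
\bE\bigl[X_{t+1}\,\big|\,\mathcal F_t\bigr]\le \alpha X_t + c_b ,
\]
where the expectation is over $\hat s_{t+1}\sim P_{\Theta^\star}(\cdot\mid \mathcal F_t)$. This is exactly the terms (I)--(III) bound, stated before taking full expectations. The first step is to confirm that the appendix argument holds conditionally on $\mathcal F_t$ (the action $a_t$ is $\mathcal F_t$-measurable). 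Measurability of $X_t$ with respect to $\mathcal F_t^-$ is immediate, since both filters are deterministic functions of $(\hat s_{1:t},a_{0:t-1})$ through Eq.~\ref{eq:belief-formula}.

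Next, define $D_{t+1}:=X_{t+1}-\bE[X_{t+1}\mid\mathcal F_t]$, a martingale difference sequence with $|D_{t+1}|\le 1$ because $X_t\in[0,1]$. Summing the recursion over $t=0,\dots,T-2$ and using $X_0=0$ gives
\[
\sum_{t=0}^{T-1}X_t\le \alpha\sum_{t=0}^{T-1}X_t + T c_b + \sum_{t} D_{t+1},
\qquad\text{so}\qquad
(1-\alpha)\sum_t X_t\le Tc_b+M_T ,
\]
with $M_T:=\sum_t D_{t+1}$.

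The key step is a self-bounding estimate of the predictable variance. Since $X_{t+1}\in[0,1]$, we have
\[
\mathrm{Var}(X_{t+1}\mid\mathcal F_t)\le \bE[X_{t+1}^2\mid\mathcal F_t]\le \bE[X_{t+1}\mid\mathcal F_t]\le \alpha X_t+c_b .
\]
Hence the total variance satisfies $V_T\le \alpha S+Tc_b$, where $S:=\sum_t X_t$. Freedman's inequality, or rather a Bernstein-type version with a data-dependent variance (for example via a fixed-$\lambda$ exponential supermartingale, avoiding peeling), then gives, with probability at least $1-\delta_b$,
\[
M_T\le \sqrt{2V_T\log(1/\delta_b)}+\tfrac{2}{3}\log(1/\delta_b).
\]
Substituting, we obtain an implicit inequality
\[
(1-\alpha)S\le Tc_b+\sqrt{2(\alpha S+Tc_b)\log(1/\delta_b)}+\tfrac23\log(1/\delta_b).
\]
We split the square root with $\sqrt{x+y}\le\sqrt x+\sqrt y$. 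The term $\sqrt{2\alpha S L}$, where $L:=\log(1/\delta_b)$, is handled by AM--GM: $\sqrt{2\alpha SL}\le \tfrac{1-\alpha}{2}S+\tfrac{\alpha L}{1-\alpha}$. This absorbs half of the left side and yields
\[
\tfrac{1-\alpha}{2}S\le Tc_b+\sqrt{2Tc_bL}+\tfrac23 L+\tfrac{\alpha L}{1-\alpha}.
\]
Dividing by $T(1-\alpha)/2$ produces exactly the three terms of $\epsilon_b(\delta_b)$, including the constants $2$, $4/3$ and $2\alpha/(1-\alpha)$. This match is reassuring that the intended route is correct.

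The main obstacle is the Freedman step with a random variance proxy. Standard Freedman requires a deterministic bound $V_T\le v$. One option is to use the crude deterministic bound $V_T\le \alpha T+Tc_b$, but this loses the constants. The better option is to apply the exponential supermartingale $\exp(\lambda M_T-\psi(\lambda)V_T)$ with a fixed $\lambda$ and then optimize the resulting linear-in-$S$ inequality directly; this absorption gives the constants above. Confirming the conditional form of the one-step recursion from the appendix is the other point to check.

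For the neural-softmax statement, the same argument applies verbatim on $\mathcal E_{\mathrm{lin}}^{\mathrm{NN}}(\delta_{\mathrm{NN}})$, using the one-step recursion from Corollary~\ref{cor:nn-param-belief-lip} with $\alpha_{\mathrm{NN}}$ and $c_b^{\mathrm{NN}}$. A union bound with $\bP\bigl((\mathcal E_{\mathrm{lin}}^{\mathrm{NN}})^c\bigr)\le\delta_{\mathrm{NN}}$ then gives the joint probability $1-(\delta_b+\delta_{\mathrm{NN}})$.
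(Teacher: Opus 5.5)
Your plan follows the paper's proof essentially step for step. The shared steps are:
\begin{itemize}
\item the conditional drift $\bE[X_{t+1}\mid\mathcal F_t]\le\alpha X_t+c_b$, read off from the bounds on terms (I)--(III) before the outer expectation;
\item the martingale differences with $X_0=0$;
\item the self-bounding estimate $V_T\le\alpha S+Tc_b$ via $\mathrm{Var}(X_{t+1}\mid\mathcal F_t)\le\bE[X_{t+1}\mid\mathcal F_t]$;
\item the $\sqrt{x+y}$ split and the AM--GM absorption of $\sqrt{2\alpha SL}$;
\item the same reduction on $\mathcal E_{\mathrm{lin}}^{\mathrm{NN}}(\delta_{\mathrm{NN}})$ in the neural case.
\end{itemize}

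The only real difference is the concentration step you flag, and there you are more careful than the paper. The paper invokes Freedman directly as $M_T\le\sqrt{2V_TL}+\tfrac23L$ (in your notation) with the random $V_T$. Freedman's inequality is stated for a deterministic variance level, so it does not give this without a peeling argument.

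Your middle-paragraph square-root display has the same defect. A fixed-$\lambda$ supermartingale only yields the linear form $M_T\le\frac{\psi(\lambda)}{\lambda}V_T+\frac{L}{\lambda}$. So write the proof along the route of your last paragraph.

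That route does recover the constants of $\epsilon_b(\delta_b)$ exactly, as you claimed without checking. Set $g(\lambda):=\frac{\lambda}{2(1-\lambda/3)}\ge\frac{\psi(\lambda)}{\lambda}$ and $v:=Tc_b$. Then $(1-\alpha-\alpha g(\lambda))S\le(1+g(\lambda))v+L/\lambda$. Let $\lambda_c:=\frac{3(1-\alpha)}{1+2\alpha}$, the point where $\alpha g(\lambda_c)=\frac{1-\alpha}{2}$, and let $\lambda^\star$ be the Bernstein optimizer of $g(\lambda)v+L/\lambda$, whose minimum is $\sqrt{2vL}+L/3$.
\begin{itemize}
\item If $\lambda^\star\le\lambda_c$, take $\lambda=\lambda^\star$. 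Then $1-\alpha-\alpha g(\lambda^\star)\ge\frac{1-\alpha}{2}$, and the right side is at most $v+\sqrt{2vL}+L/3$.
\item Otherwise take $\lambda=\lambda_c$. Then $g(\lambda_c)v\le g(\lambda^\star)v\le\sqrt{2vL}+L/3$ and $L/\lambda_c=\frac{(1+2\alpha)L}{3(1-\alpha)}$. These add up to exactly $\sqrt{2vL}+\bigl(\tfrac23+\tfrac{\alpha}{1-\alpha}\bigr)L$.
\end{itemize}
In both cases $S\le\frac{2}{1-\alpha}\bigl(v+\sqrt{2vL}+(\tfrac23+\tfrac{\alpha}{1-\alpha})L\bigr)=T\epsilon_b(\delta_b)$.

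Because $\lambda$ depends only on $T$, $c_b$, $\alpha$ and $\delta_b$, this is a single Markov bound on $\exp(\lambda M_T-\psi(\lambda)V_T)$. Your version therefore repairs the paper's step at no cost in the constants.
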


\paragraph{Proof sketch.}
Starting from a one-step conditional drift inequality for $X_t$, define a martingale difference sequence and apply Freedman’s inequality to control deviations of $\sum_t X_t$ from its conditional expectation, yielding the stated time-average bound.

\medskip

\begin{remark}[Learning Beliefs Directly]\label{rem:alternative-belief-approx}
Our analysis is modular with respect to the source of the approximate beliefs. 
Although Sections~\ref{sec:setup}--\ref{sec:belief-stability} instantiate $\tilde b_t=b_t^\Theta$ through Bayesian filtering under a learned POMDP model, one may instead use a direct belief-inference module that maps histories $(\hat s_{1:t},a_{0:t-1})$ to approximate beliefs. Recent examples include Deep Belief Markov Models \cite{DBeliefNN}, which learn belief-transition and belief-inference operators via variational inference, and flow-based recurrent belief models, which use normalizing flows to represent flexible belief distributions \citep{flowBelief}. This provides an alternative approach to bypass explicit POMDP model estimation when convenient. In our reward-learning analysis in the following section, such a module can be substituted for the Bayesian filter whenever it provides a belief-accuracy guarantee of the form
\[
\frac1T\sum_{t=0}^{T-1}\|\tilde b_t-b_t^{\Theta^\star}\|_{\mathrm{TV}}\le \epsilon_b^{\mathrm{approx}} .
\]
 
for some approximation error $\epsilon_b^{\mathrm{approx}}$. Then the belief-approximation error enters the next section only through the trajectory-feature perturbation term, and we can replace
$\epsilon_b(\delta_b,2N_{\mathrm{HF}})$ by $\epsilon_b^{\mathrm{approx}}$
to obtain the same bound on accumulated feature difference $\|\tilde\phi_i-\phi_i\|_2$, and the subsequent Bradley--Terry estimation analysis proceeds unchanged with this substituted error level.
\end{remark}

%%%%%%%%%%%%%%%%%%%%%%%%%%%%%%%%%%%%%%%%%%%%%%%%%%%%%%%%%%%%%%%%%%%%%%%%%%%%
\section{Preference-Based Reward Learning Under Belief Error}
\label{sec:rlhf}

\noindent
Having established quantitative belief stability under model mismatch, we turn to learning a reward model from pairwise preferences when trajectories are represented in the belief-MDP. Note that the model mismatch affects reward learning through the induced belief features, since preference labels come from rollouts, but the learner’s feature construction relies on $b_t^\Theta$ rather than the oracle beliefs $b_t^{\Theta^\star}$. Accordingly, this section formalizes the Bradley--Terry preference model \citep{Bradley1952, Wirth2017} over belief-based trajectory features, and quantifies how the belief error bounds from Section~\ref{sec:belief-stability} translate into an explicit perturbation (bias) term in reward estimation, alongside the usual finite-sample statistical error.

\subsection{Reward model and preference data}
\label{subsec:reward-pref-model}

\begin{assumption}[Reward model structure and preference data for belief-MDP]\label{assum:reward-rlhf}
\leavevmode
\begin{enumerate}
\item \textbf{Linear latent reward.}
For all $(s,a)\in\bS\times\bA$, the reward is linear:
\(r(s,a)=\phi_r(s,a)^\top \mu^\star\) with the corresponding feature map satisfying \(\|\phi_r(s,a)\|_2\le B_r < \infty\),
and the unknown reward parameter $\mu^\star\in\bB_2(r_\mu)\subset\bR^{d}$.

\item \textbf{Belief feature map.}
Define
\(\phi_b(b,a):=\sum_{s\in\bS}b(s)\,\phi_r(s,a)\) for \((b,a)\in\Delta_{\bS}\times\bA\).
Then \(r_b(b,a) :=\bE_{s\sim b}\big[r(s,a)\big]
=\sum_{s\in\bS} b(s)\,\phi_r(s,a)^\top\mu^\star
=\phi_b(b,a)^\top\mu^\star.\)

\item \textbf{Preference realizability.}
Let $\{(\tau_i^{(1)},\tau_i^{(2)},y_i)\}_{i=1}^{N_{\mathrm{HF}}}$ be $N_{\mathrm{HF}}$ independent preference comparisons with fixed trajectory horizon $T\in\bN$. Here $y_i\in\{0,1\}$ and $\sigma(z)=1/(1+e^{-z})$. Let $\phi_i\in\bR^{d}$ be the clean trajectory-level feature difference defined in Eq.~\ref{eq:accu-feature-exact}. Then, for every $i$,
\(\bP(y_i=1\mid \phi_i)=\sigma(\phi_i^\top\mu^\star).\)
\end{enumerate}
\end{assumption}

\begin{remark}
Since $\phi_b$ is linear in $b$, it inherits boundedness:
\[
\|\phi_b(b,a)\|_2 \le \sum_{s\in\bS} b(s)\,\|\phi_r(s,a)\|_2 \le B_r, \forall (b,a)\in\Delta_{\bS}\times\bA.
\]
Moreover, for any $b,b'\in\Delta_{\bS}$ and all $a\in\bA$,
\begin{align}\label{eq:belief-feature-lip}
\|\phi_b(b,a)&-\phi_b(b',a)\|_2
=\Big\|\sum_{s\in\bS}\big(b(s)-b'(s)\big)\phi_r(s,a)\Big\|_2\notag\\
&\le B_r\|b-b'\|_1
=2B_r\|b-b'\|_{\mathrm{TV}}.
\end{align}
\end{remark}

\begin{remark}[Generalization to non-linear rewards]
Linear reward parameterizations are standard in preference-based RLHF and are widely used as a tractable baseline (see, e.g., \cite{ZhuPrincipledComparisons, Du2024Exploration-DrivenUtilization, cen2025valueincentivized}). 
Our analysis isolates how model-mismatch propagates through belief errors into perturbed trajectory features, which in turn affects reward learning in Theorem \ref{thm:main-mu}. 
More general reward function classes can be accommodated by introducing an approximation error term (relative to the linear class) and carrying it through the final bound. 
In particular, over-parameterized neural networks are a plausible choice as discussed in Appendix ~\ref{sec:nn-extension} for POMDP model approximation.
\end{remark}
 
\subsection{Clean vs.\ perturbed trajectory features and MLE}
\label{subsec:features-mle}

Throughout the following, we work on the belief state space $\Delta_{\bS}$ and use the belief feature map
$\phi_b$. Let $N_{\mathrm{HF}}$ denote the number of \emph{independent} preference comparisons, and consider a dataset
\(
\bigl\{(\tau_i^{(1)},\tau_i^{(2)},y_i)\bigr\}_{i=1}^{N_{\mathrm{HF}}}
\),
where $y_i\in\{0,1\}$ is the observed preference label and $y_i=1$ indicates a preference for $\tau_i^{(1)}$ over $\tau_i^{(2)}$. For each $i\in\{1,\dots,N_{\mathrm{HF}}\}$ and $j\in\{1,2\}$, let the observed history be \(Y_i^{(j)} \;:=\; \bigl(\hat s_{i,1:T}^{(j)},\,a_{i,0:T-1}^{(j)}\bigr).
\) and define the corresponding belief sequences $(b_{i,h}^{(j),\Theta})_{h=0}^{T-1}$ and
$(b_{i,h}^{(j),\Theta^\star})_{h=0}^{T-1}$ as the filtering distributions computed under
$\Theta$ and $\Theta^\star$, respectively, on the \emph{same} history $Y_i^{(j)}$ via the Bayesian update
Eq.~\ref{eq:belief-formula}. We then write the history-indexed belief trajectories as
\(
\tau_i^{(j),\Theta}:=\bigl(b_{i,h}^{(j),\Theta},\,a_{i,h}^{(j)}\bigr)_{h=0}^{T-1}, \tau_i^{(j),\Theta^\star}:=\bigl(b_{i,h}^{(j),\Theta^\star},\,a_{i,h}^{(j)}\bigr)_{h=0}^{T-1}
\), and the action sequence $(a_{i,h}^{(j)})_{h=0}^{T-1}$ is treated as observed. Throughout this work, we assume that all trajectories have a fixed horizon $T<\infty$.

 \begin{remark}[Single realized action sequence vs.\ fixed-history analysis]\label{rem:one-action-seq}
In an RLHF interaction there is only one physically realized history $(\hat s_{1:T},a_{0:T-1})$.
Our stability guarantees compare $b_t^\Theta$ and $b_t^{\Theta^\star}$ computed on this \emph{same realized history}
(i.e., conditioning on the realized action sequence and coupling on the realized observations).
This fixed-history coupling is exactly what controls feature perturbations in Eq.~\ref{eq:accu-feature-exact}--Eq.~\ref{eq:accu-feature-perturbed}.
Analyzing additional divergence caused by two models inducing different closed-loop action sequences is beyond scope.
\end{remark}

\paragraph{Reward parameter learning from human preferences.}
We consider the estimation of the reward parameter $\mu^\star\in\bR^d$ from human preference data under a trajectory-level Bradley--Terry model \cite{Bradley1952}. Human feedback is provided as pairwise trajectory comparisons $\{(\tau_i^{(1)},\tau_i^{(2)},y_i)\}_{i=1}^{N_{\mathrm{HF}}}$. For each comparison, we define the trajectory-level exact and perturbed accumulated feature differences as

\begin{equation}\label{eq:accu-feature-exact}
\phi_i:=\sum_{h=0}^{T-1}\phi_b \bigl(b_{i,h}^{(1),\Theta^\star},a_{i,h}^{(1)}\bigr)-\sum_{h=0}^{T-1}\phi_b \bigl(b_{i,h}^{(2),\Theta^\star},a_{i,h}^{(2)}\bigr),
\end{equation}
\begin{equation}\label{eq:accu-feature-perturbed}
\tilde\phi_i:=\sum_{h=0}^{T-1} \phi_b \bigl(b_{i,h}^{(1),\Theta}, a_{i,h}^{(1)}\bigr)
-\sum_{h=0}^{T-1} \phi_b \bigl(b_{i,h}^{(2),\Theta}, a_{i,h}^{(2)}\bigr).
\end{equation}
The feature vector $\phi_i$ corresponds to the trajectory-level reward feature difference evaluated along the true belief process $(b_{i,h}^{(j),\Theta^\star})_{h}$.
The perturbed feature $\tilde\phi_i$ is computed using approximate beliefs $(b_{i,h}^{(j),\Theta})_{h}$, where the differences arise from model mismatch. Lemma \ref{lem:Delta-Bound} bounds  $\|\phi_i-\tilde\phi_i\|_2$ on the event $\mathcal E_{b}^{\mathrm{RLHF}}(\delta_b)$ in the Appendix. These two features induce conditional preference models; for example,
\(
\tilde\phi_i
\)
induces
\(
\bP(y_i=1\mid \tilde\phi_i,\mu)=\sigma(\tilde\phi_i^\top\mu)
\),
with $\sigma(z)=1/(1+e^{-z})$.
We estimate the reward parameter using the constrained maximum likelihood estimator
\begin{align}\label{eq:reward-mle}
\tilde\mu\;&:=\;\argmin_{\|\mu\|_2\le r_\mu}\Bigg\{-\sum_{i=1}^{N_{\mathrm{HF}}}\Big[y_i\log\sigma(\tilde\phi_i^\top\mu)
+(1-y_i)\log \bigl(1-\sigma(\tilde\phi_i^\top\mu)\bigr)\Big]\Bigg\}.
\end{align}
In the following analysis, we study the statistical behavior of $\tilde\mu$ and its robustness to trajectory-level perturbations arising from belief approximation. For clarity, the complete computational pipeline from recursive belief filtering to the optimization of the regularized estimator in Eq.~\ref{eq:reward-mle} is summarized in Algorithm \ref{alg:reward-learning}.

\begin{remark}[Belief-based features]
Classical Bradley--Terry reward learning typically assumes access to true states and uses state-based trajectory features \citep{Wirth2017, ZhuPrincipledComparisons, Du2019, cen2025valueincentivized}. Under partial observability, we instead construct $\phi_i$ and $\tilde\phi_i$ from belief--action pairs; hence model mismatch affects reward learning through belief-induced feature perturbations.
\end{remark}

\begin{algorithm}[t]
\caption{Preference-Based Reward Learning via Learned Beliefs}
\label{alg:reward-learning}
\begin{algorithmic}[1]
\STATE \textbf{Input:}
Horizon $T$;
preference data $\mathcal{D} = \{(Y_i^{(1)},Y_i^{(2)},y_i)\}_{i=1}^{N_{\mathrm{HF}}}$ with
$Y_i^{(j)}=(\hat s^{(j)}_{i,1:T},a^{(j)}_{i,0:T-1})$;
labels $y_i\in\{0,1\}$;
learned POMDP model $\Theta=(\theta,w)$; belief feature map $\phi_b$; constraint radius $r_\mu$.

\STATE \textit{Phase 1: Reconstruct Beliefs and Features}
\FOR{$i=1,\dots,N_{\mathrm{HF}}$}
    \FOR{$j\in\{1,2\}$}
        \STATE $b^{(j),\Theta}_{i,0}\gets \nu_0$
        \FOR{$h=0,\dots,T-1$}
            \STATE $b^{(j),\Theta}_{i,h+1}\gets \text{BeliefUpdate}\!\left(
            b^{(j),\Theta}_{i,h},a^{(j)}_{i,h},\hat s^{(j)}_{i,h+1};\Theta\right)$
            \hfill{\small (Eq.~\ref{eq:belief-formula})}
        \ENDFOR
    \ENDFOR
    \STATE Compute perturbed feature difference $\tilde\phi_i$ (Eq.~\ref{eq:accu-feature-perturbed}):
    \STATE $\tilde\phi_i \gets \sum_{h=0}^{T-1}\phi_b\!\left(b^{(1),\Theta}_{i,h},a^{(1)}_{i,h}\right)
                 \;-\; \sum_{h=0}^{T-1}\phi_b\!\left(b^{(2),\Theta}_{i,h},a^{(2)}_{i,h}\right)$
\ENDFOR

\STATE \textit{Phase 2: Maximum Likelihood Estimation}
\STATE Solve for $\tilde\mu$ (Eq.~\ref{eq:reward-mle}):
\STATE $\tilde\mu \gets \argmin_{\|\mu\|_2\le r_\mu}\;
-\sum_{i=1}^{N_{\mathrm{HF}}}\Big[y_i\log\sigma(\tilde\phi_i^\top\mu)
+(1-y_i)\log\big(1-\sigma(\tilde\phi_i^\top\mu)\big)\Big]$

\STATE \textbf{Output:} Estimated reward parameter $\tilde\mu$
\end{algorithmic}
\end{algorithm}

%%%%%%%%%%%%%%%%%%%%%%%%%%%%%%%%
%%%%%%%%%%%%%%%%%%%%%%%%%%%%%
%==========================
\subsection{Belief accuracy event for preference learning}
\label{subsec:belief-event}
In this subsection we lift the time-average belief error guarantee from
Corollary~\ref{cor:belief-good-condition-cor} to the full preference dataset.
Note that we do \emph{not} need a uniform-in-time control such as
$\sup_{0\le h\le T-1}\|b_h^\Theta-b_h^{\Theta^\star}\|_{\mathrm{TV}}$.
Since Bradley--Terry comparisons are driven by accumulated trajectory-level  feature differences (\ref{eq:accu-feature-exact} and \ref{eq:accu-feature-perturbed}), the relevant quantity is the \emph{per-trajectory time-average} belief
mismatch, which then translates into a bound on the induced feature perturbation.
We therefore define a high-probability event on which every rollout used to form the
comparisons satisfies a small average belief error.

\textbf{Belief accuracy event.}
Consider preference comparisons $\{(\tau_i^{(1)},\tau_i^{(2)},y_i)\}_{i=1}^{N_{\mathrm{HF}}}$. For any confidence level $\delta_b\in(0,1)$, define the belief-accuracy events for $i\in\{1,\dots,N_{\mathrm{HF}}\},\ j\in\{1,2\}$ as
\begin{equation}\label{eq:Eb-RLHF-uniform}
\mathcal E_{b,i}^{(j)}(\delta_b)\;:=\;\left\{\frac{1}{T}\sum_{h=0}^{T-1}
\bigl\|b_{i,h}^{(j),\Theta}-b_{i,h}^{(j),\Theta^\star}\bigr\|_{\mathrm{TV}}
\ \le\ \epsilon_b(\delta_b)\right\},\quad \mathcal E_{b}^{\mathrm{RLHF}}(\delta_b)\;:=\;\bigcap_{i=1}^{N_{\mathrm{HF}}}\ \bigcap_{j\in\{1,2\}}
\mathcal E_{b,i}^{(j)} \left(\frac{\delta_b}{2N_{\mathrm{HF}}}\right).
\end{equation}
By Corollary~\ref{cor:belief-good-condition-cor} and a union bound,
\(
\bP\bigl(\mathcal E_{b}^{\mathrm{RLHF}}(\delta_b)\bigr)\ \ge\ 1-\delta_b.
\)
Throughout the following analysis, we work on the event $\mathcal E_{b}^{\mathrm{RLHF}}(\delta_b)$ for log-linear POMDP model and use the shorthand
\begin{align}\label{eq:eps-b-bar-def}
\epsilon_b(\delta_b, 2N_{\mathrm{HF}}):=\;\epsilon_b \left(\frac{\delta_b}{2N_{\mathrm{HF}}}\right)&=\frac{2c_b\,(\delta(\theta),\delta(w))}{1-\alpha}+ \frac{2}{1-\alpha}\sqrt{\frac{2 c_b\,(\delta(\theta),\delta(w)) \log(2N_{\mathrm{HF}}/\delta_b)}{T}}\notag\\[6pt]
&\qquad\qquad+\frac{\log(2N_{\mathrm{HF}}/\delta_b)}{T(1-\alpha)}\left(\frac{4}{3}+ \frac{2\alpha}{1-\alpha}\right).
\end{align}
That is, $\epsilon_b(\delta_b, 2N_{\mathrm{HF}})$ is given by Eq.~\ref{eq:hp-belief-freedman} with the substitution
$\log\bigl(2N_{\mathrm{HF}}/\delta_b\bigr)$. Analogously, for the neural-softmax POMDP model under the setting of Theorem~\ref{cor:nn-param-belief-lip}, fix \(\delta_{\mathrm{NN}}\in(0,1)\). On the event \(\mathcal E_{\mathrm{lin}}^{\mathrm{NN}}(\delta_{\mathrm{NN}})\), define \(\mathcal E_{b}^{\mathrm{RLHF},\mathrm{NN}}(\delta_b,\delta_{\mathrm{NN}})\) similarly to Eq.~\ref{eq:Eb-RLHF-uniform}, with \(b_{i,h}^{(j),\Theta}\) and \(b_{i,h}^{(j),\Theta^\star}\) replaced by \(b_{i,h}^{(j),W}\) and \(b_{i,h}^{(j),W^\star}\), respectively, and with \(\epsilon_b\) replaced by the neural bound \(\epsilon_b^{\mathrm{NN}}\). More explicitly, we use the shorthand
\[
\epsilon_b^{\mathrm{NN}}(\delta_b,2N_{\mathrm{HF}},\delta_{\mathrm{NN}})
:=\epsilon_b^{\mathrm{NN}}\!\left(\frac{\delta_b}{2N_{\mathrm{HF}}},\delta_{\mathrm{NN}}\right),
\]
where \(\epsilon_b^{\mathrm{NN}}(\cdot,\delta_{\mathrm{NN}})\) is defined in Eq.~\ref{eq:hp-belief-freedman-nn} using the linearization-error levels
\(\varepsilon_p^{\mathrm{NN}}(m,\delta_{\mathrm{NN}})\) and
\(\varepsilon_\Phi^{\mathrm{NN}}(m,\delta_{\mathrm{NN}})\). By the neural high-probability belief bound and a union bound over the \(2N_{\mathrm{HF}}\) trajectories, together with \(\mathcal E_{\mathrm{lin}}^{\mathrm{NN}}(\delta_{\mathrm{NN}})\), we have
\[
\bP\!\left(\mathcal E_{b}^{\mathrm{RLHF},\mathrm{NN}}(\delta_b,\delta_{\mathrm{NN}})\right)
\ge 1-(\delta_b+\delta_{\mathrm{NN}}).
\]

\begin{remark}[Preference realizability and conditional independence]\label{remark:label-cond-indep}
Assumption~\ref{assum:reward-rlhf} (item~3) assumes a correctly specified Bradley--Terry preference model, where each label $y_i$ is generated from an independent trajectory pair and is conditionally independent given the corresponding clean feature difference $\phi_i$, with \(
\bP(y_i=1\mid \phi_i)=\sigma(\phi_i^\top\mu^\star).
\) This provides the link from trajectory-level features to observable preference feedback. (See, also Proof \ref{lem:clean-like-grad-bound})
\end{remark}

 \subsection{Reward estimation guarantee}\label{subsec:main-reward-thm}

We now state our main theorem for estimating the Bradley--Terry reward parameter
$\mu^\star$. Our analysis is carried out in the \emph{local geometry} induced by the
empirical design covariance, which captures the curvature of the empirical MLE objective. Accordingly, we measure error in the
covariance-weighted norm $\|\cdot\|_{\tilde\Sigma+\zeta I}$, (See Definition \ref{def:clean-pert-cov}), which is a natural
scale for evaluating the parameter optimality gap. However, our learner features are
\emph{perturbed} through belief mismatch, so the usual self-normalized arguments \cite{ZhuPrincipledComparisons, NIPS2011_e1d5be1c} cannot be applied directly to the gradient formed with $\tilde\phi_i$.
This introduces an additional technical difficulty; see Remark~\ref{remark:why-not-exact-cov}.

\begin{definition}[Clean and perturbed empirical covariances]\label{def:clean-pert-cov}
Given trajectory-level feature differences $\{\phi_i\}_{i=1}^{N_{\mathrm{HF}}}$ and their perturbed counterparts
$\{\tilde\phi_i\}_{i=1}^{N_{\mathrm{HF}}}$, define the corresponding empirical covariance matrices by
\begin{equation}\label{eq:sigma-cov-hf}
\Sigma \;:=\; \frac{1}{N_{\mathrm{HF}}}\sum_{i=1}^{N_{\mathrm{HF}}}\phi_i\phi_i^\top,
\qquad
\tilde\Sigma \;:=\; \frac{1}{N_{\mathrm{HF}}}\sum_{i=1}^{N_{\mathrm{HF}}}\tilde\phi_i\tilde\phi_i^\top.
\end{equation}
\end{definition}

\begin{theorem}[Reward parameter estimation under belief error]\label{thm:main-mu}
Assume the conditions of Theorem~\ref{thm:param-belief-lip} and Assumption \ref{assum:reward-rlhf}. Let $\Theta=(\theta,w)\in\bB_2(\Theta^\star,r_\Theta)$ and let
$\{(Y_i^{(1)},Y_i^{(2)},y_i)\}_{i=1}^{N_{\mathrm{HF}}}$ be $N_{\mathrm{HF}}$ independent preference comparisons. Form perturbed trajectory feature differences $\{\tilde\phi_i\}_{i=1}^{N_{\mathrm{HF}}}$ via Eq.~\ref{eq:accu-feature-perturbed}
using beliefs $(b_{i,h}^{(j),\Theta})_{h=0}^{T}$ computed from Eq.~\ref{eq:belief-formula}, and let
$\tilde\mu$ be the constrained Bradley--Terry MLE in Eq.~\ref{eq:reward-mle}.
Fix $\delta_b,\delta_c\in(0,1)$. Then, for any factor $c_{\zeta}>1$, with probability at least $1-(\delta_b+\delta_c)$,
the estimation error satisfies

\begin{align}
\|\tilde\mu-\mu^\star\|_{\tilde\Sigma+\zeta I}
\le &\frac{2\sqrt{c_{\zeta}}}{\rho\sqrt{N_{\mathrm{HF}}(c_{\zeta}-1)}}\sqrt{d\log \Big(1+\frac{4T^2 B_r^2}{\zeta d}\Big)+2\log \Big(\frac{1}{\delta_c}\Big)} \notag\\[8pt]
&\qquad\qquad+ \frac{8T B_r\,\epsilon_b(\delta_b, 2N_{\mathrm{HF}})}{\rho\sqrt{\zeta }}\, \Bigl(1+\tfrac12\,T B_r\,r_\mu\Bigr)+2r_\mu\sqrt{\zeta }\\
&:=\,\epsilon(\delta_b,\delta_c,\zeta),
\label{eq:main_mu_bound}
\end{align}
where $\rho\,=\;1/\big(2+\exp(2T B_r r_\mu)+\exp(-2T B_r r_\mu)\big)$, and $\zeta:=c_{\zeta} 16T^2B_r^2\epsilon_b(\delta_b, 2N_{\mathrm{HF}})(1+\epsilon_b(\delta_b, 2N_{\mathrm{HF}}))$.

The same conclusion holds for the neural-softmax POMDP model of
Theorem~\ref{cor:nn-param-belief-lip}. In that case, additionally fix \(\delta_{\mathrm{NN}}\in(0,1)\). With probability at least
\(1-(\delta_b+\delta_c+\delta_{\mathrm{NN}})\), the bound
Eq.~\ref{eq:main_mu_bound} holds after replacing
\(b_{i,h}^{(j),\Theta},b_{i,h}^{(j),\Theta^\star}\) by
\(b_{i,h}^{(j),W},b_{i,h}^{(j),W^\star}\), respectively, and replacing
\(\epsilon_b(\delta_b,2N_{\mathrm{HF}})\) everywhere, including in the definition of
\(\zeta\), by \(
\epsilon_b^{\mathrm{NN}}(\delta_b,2N_{\mathrm{HF}},\delta_{\mathrm{NN}}).
\)

\end{theorem}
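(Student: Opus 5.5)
We follow the Zhu et al.\ strong-convexity argument for the constrained Bradley--Terry MLE, now run on the perturbed design $\tilde\phi_i$. Separately, we control the mismatch between the clean gradient (whose noise is a martingale) and the perturbed gradient. Throughout, we work on $\mathcal E_b^{\mathrm{RLHF}}(\delta_b)$, which has probability at least $1-\delta_b$.

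\textbf{Feature perturbation.} On this event, Eq.~\ref{eq:belief-feature-lip} applied termwise gives, for every $i$,
\[
\|\tilde\phi_i-\phi_i\|_2\le 2B_r\sum_{j}\sum_{h}\|b^{(j),\Theta}_{i,h}-b^{(j),\Theta^\star}_{i,h}\|_{\mathrm{TV}}\le 4TB_r\,\epsilon_b(\delta_b,2N_{\mathrm{HF}})=:\Delta .
\]
We also have the crude bounds $\|\phi_i\|_2,\|\tilde\phi_i\|_2\le 2TB_r$. Consequently $|\tilde\phi_i^\top\mu|\le 2TB_rr_\mu$ on the ball, so $\sigma'(\tilde\phi_i^\top\mu)\ge\rho$. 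This gives strong convexity of the empirical loss $\tilde\ell$ in the seminorm $\|\cdot\|_{\tilde\Sigma}$:
\[
\tilde\ell(\mu^\star+v)-\tilde\ell(\mu^\star)-\langle\nabla\tilde\ell(\mu^\star),v\rangle\ge \rho\|v\|_{\tilde\Sigma}^2 .
\]
Set $v=\tilde\mu-\mu^\star$. Optimality of $\tilde\mu$ together with $\|v\|_2\le 2r_\mu$ then yields
\[
\rho\|v\|^2_{\tilde\Sigma+\zeta I}\le \|\nabla\tilde\ell(\mu^\star)\|_{(\tilde\Sigma+\zeta I)^{-1}}\|v\|_{\tilde\Sigma+\zeta I}+4\rho\zeta r_\mu^2 .
\]
Solving this quadratic produces the $2r_\mu\sqrt\zeta$ regularization term. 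The constants are absorbed using $\sqrt{a+b}\le\sqrt a+\sqrt b$.

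\textbf{Gradient decomposition.} We write
\[
\nabla\tilde\ell(\mu^\star)=-\frac1{N_{\mathrm{HF}}}\sum_i\bigl(y_i-\sigma(\tilde\phi_i^\top\mu^\star)\bigr)\tilde\phi_i
\]
and split it as $G_1+G_2+G_3$:
\begin{itemize}[leftmargin=*]
\item $G_1=\frac1N\sum_i(y_i-\sigma(\phi_i^\top\mu^\star))\phi_i$, the clean noise term;
\item $G_2=\frac1N\sum_i(y_i-\sigma(\phi_i^\top\mu^\star))(\tilde\phi_i-\phi_i)$;
\item $G_3=\frac1N\sum_i(\sigma(\phi_i^\top\mu^\star)-\sigma(\tilde\phi_i^\top\mu^\star))\tilde\phi_i$.
\end{itemize}
By Remark~\ref{remark:label-cond-indep}, the centered labels are conditionally $1/2$-subgaussian given $\phi_i$.

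\textbf{Main obstacle: self-normalized control of $G_1$.} The self-normalized bound of \cite{NIPS2011_e1d5be1c} controls $\|G_1\|$ only in the clean geometry $(\Sigma+\lambda I)^{-1}$, not in the perturbed one. We therefore need a covariance comparison. Writing $\tilde\phi\tilde\phi^\top-\phi\phi^\top=\phi e^\top+e\phi^\top+ee^\top$ with $\|e\|\le\Delta$, we get
\[
\|\tilde\Sigma-\Sigma\|_{\mathrm{op}}\le 4TB_r\Delta+\Delta^2\le 16T^2B_r^2\epsilon_b(1+\epsilon_b)=\zeta/c_\zeta
\]
(the exact constants to be checked). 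Hence $\tilde\Sigma+\zeta I\succeq\Sigma+\zeta(1-1/c_\zeta)I$, and also $\succeq \tfrac{c_\zeta-1}{c_\zeta}(\Sigma+\zeta I)$. This is exactly where the factor $\sqrt{c_\zeta/(c_\zeta-1)}$ and the requirement $c_\zeta>1$ enter. Applying the self-normalized inequality with regularizer $\zeta$ and $\|\phi_i\|\le 2TB_r$, and using $\det(\Sigma+\zeta I)/\zeta^d\le(1+4T^2B_r^2/(\zeta d))^d$, gives, with probability at least $1-\delta_c$, the stated $\sqrt{d\log(\cdot)+2\log(1/\delta_c)}/\sqrt{N_{\mathrm{HF}}}$ term. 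If I am unsure about the direction of the comparison, my first alternative is to apply the inequality directly to the matrix $\Sigma+\zeta(1-1/c_\zeta)I$ and then compare.

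\textbf{Bias terms $G_2,G_3$.} These are bounded deterministically, using $\|x\|_{(\tilde\Sigma+\zeta I)^{-1}}\le\|x\|_2/\sqrt\zeta$:
\[
\|G_2\|_2\le\Delta,\qquad \|G_3\|_2\le\tfrac14\Delta r_\mu\cdot 2TB_r,
\]
where the second uses the $1/4$-Lipschitzness of $\sigma$. Together these give the bias term
\[
\frac{c\,TB_r\epsilon_b}{\rho\sqrt\zeta}\bigl(1+\tfrac12TB_rr_\mu\bigr),
\]
matching the stated constant $8$. A final union bound over $\mathcal E_b^{\mathrm{RLHF}}$ and the self-normalized event gives probability at least $1-(\delta_b+\delta_c)$. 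The neural case is verbatim, replacing $\epsilon_b$ by $\epsilon_b^{\mathrm{NN}}$ and intersecting with $\mathcal E^{\mathrm{NN}}_{\mathrm{lin}}$.
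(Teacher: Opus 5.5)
Your proposal is correct and follows essentially the same route as the paper. Both arguments use $\rho$-strong convexity of $\tilde L$ in the $\tilde\Sigma$-seminorm together with the $2r_\mu\sqrt{\zeta}$ regularization step. They split $\nabla\tilde L(\mu^\star)$ into the clean self-normalized term plus deterministic perturbation terms, which both you and the paper bound by $(1+\tfrac12 T B_r r_\mu)\cdot 4TB_r\epsilon_b/\sqrt{\zeta}$; your $G_2,G_3$ split differs from the paper's only cosmetically. Finally, both use the comparison $\|\tilde\Sigma-\Sigma\|_{\mathrm{op}}\le \zeta/c_\zeta$ to transfer the clean norm to the perturbed one, in the direction you wrote. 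One small fix: the integral-form Taylor remainder gives the lower bound $\frac{\rho}{2}\|v\|_{\tilde\Sigma}^2$, not $\rho\|v\|_{\tilde\Sigma}^2$, and this factor is exactly what produces the $2/\rho$ prefactor, and hence the constant $8$, in the stated bound.
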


\begin{proof}[Proof sketch]
We show $\tilde L$ is $\rho$-strongly convex on $\bB_2(r_\mu)$ in the $(\tilde\Sigma+\zeta I)$-geometry, and then control the estimation error by bounding the gradient at $\mu^\star$ \(
\|\tilde\mu-\mu^\star\|_{\tilde\Sigma+\zeta I} \;\le\; \frac{2}{\rho}\,\|\nabla \tilde L(\mu^\star)\|_{(\tilde\Sigma+\zeta I)^{-1}} \;+\;2r_\mu\sqrt{\zeta }\). Then, we decompose $\nabla \tilde L(\mu^\star)$ into (i) a clean term handled by an elliptical-potential bound, and (ii) a perturbation term controlled on $\mathcal E_b^{\mathrm{RLHF}}(\delta_b)$. Finally, we transfer the bound from the clean norm $(\Sigma+\zeta I)$ to the perturbed norm $(\tilde\Sigma+\zeta I)$ by an inverse-comparison argument. More details on $\zeta$ is provided in Remarks \ref{rem:zeta-explicit}-\ref{rem:zeta-choice}. The proof for neural-softmax POMDP is similar.
\end{proof}

\begin{proof}
The full proof is given in Appendix \ref{proof:thm-rlhf}.
\end{proof}

\begin{remark}[The cost of partial observability and error decomposition]
Compare Theorem~\ref{thm:main-mu} to the oracle fully observed setting, where the learner observes the true state and thus eliminats the belief-induced model-mismatch bias terms. In that case one recovers the standard $
\tilde O\!\left(\sqrt{\frac{d}{\rho^2 N_{\mathrm{HF}}}}\right)
$ decay rate (e.g., Lemma 5.1. \citet{ZhuPrincipledComparisons}).
In contrast, under partial observability with a fixed learned dynamics model, the bound contains additional terms that do not decay with $N_{\mathrm{HF}}$; hence letting $N_{\mathrm{HF}}\to\infty$ cannot reduce the reward-estimation error upper bound below this bias floor. This separates the benefit of more feedback from the price of operating with approximate beliefs. More precisely, the bound in Theorem~\ref{thm:main-mu} separates three contributions to the parameter error. Up to logarithmic factors,
\begin{align*}
    \|\tilde\mu-\mu^\star\|_{\tilde\Sigma+\zeta I}\;\;\lesssim\;\;\underbrace{\mathcal{O}\!\left(\frac{1}{\sqrt{N_{\mathrm{HF}}}}\right)}_{\text{ Statistical Noise}}+\underbrace{\mathcal{O}\!\left(\frac{T B_r}{\sqrt{\zeta}}\;\epsilon_b(\delta_b,2N_{\mathrm{HF}})\right)}_{\text{ Model-Mismatch Bias}}
    \;+\;\underbrace{\mathcal{O}(r_\mu\sqrt{\zeta})}_{\text{ Regularization Bias}}.
\end{align*}

\end{remark}

\section{Conclusion}
\label{sec:conclusion}
We analyzed preference-based reward learning in belief-MDPs under partial observability, where the true environment state is latent, and the agent instead acts on belief states obtained by Bayesian filtering from observation histories under an inexact learned POMDP. For finite log-linear POMDPs, we proved explicit stability of the filtering recursion to parametric model mismatch under a Dobrushin-type mixing condition, yielding bounds on $\bE[\|b_t^\Theta-b_t^{\Theta^\star}\|_{\mathrm{TV}}]$ and high-probability time-average control. We further extended the same belief-stability mechanism to neural-softmax POMDP models through an NTK-style linearization, where finite-width linearization errors enter additively into the belief perturbation bound. We then propagated belief mismatch into trajectory-level feature perturbations and derived finite-sample guarantees for Bradley--Terry reward estimation from preferences, decomposing the error upper bound into an $N_{\mathrm{HF}}^{-1/2}$ statistical term, an irreducible model-mismatch bias controlled by the belief error level, and a regularization bias governed by $\zeta$.

A limitation is that our filter comparison treats the action sequence as given and studies stability under a common logged action--observation history, which is the relevant regime for offline RLHF on a fixed dataset \cite{kaufmann2025a}. Also, the Dobrushin contraction assumption can be conservative in weakly mixing or nearly deterministic regimes and may only be valid in a local neighborhood of the true parameters. For the neural-softmax extension, the guarantee additionally relies on a lazy-training regime, so the resulting constants depend on the width-dependent approximation errors.

As future directions, one can consider end-to-end closed-loop guarantees that couple belief-filter error with policy-induced trajectory drift, develop weaker stability notions beyond uniform Dobrushin contraction, and extend the analysis to broader learned-dynamics function classes with explicit approximation terms. A natural complementary module is a POMDP parameter-estimation stage, for either log-linear or neural transition and observation models, with finite-sample error control, enabling a fully modular pipeline in which estimation, filtering stability, preference-based reward learning, and policy optimization in the continuous belief-MDP are analyzed and improved componentwise.

\newpage

%%%%%%%%%%%%%%%%%%%%%%%%%%%%%%%%%%%%%%%%%%%%%%%%%%%%%%%%%%%%%%%%%%%%%%%%%%%%
% \section{Implications and Design Guidance}
% \label{sec:implications}

% We summarize how the bounds depend on the key quantities already present in the statements.
% \paragraph{Effect of log-linear feature bound $B$.}
% The bound $B$ controls (i) the explicit Dobrushin lower bounds in Remark~\ref{rem:alpha-control} and (ii) the constant $c_b(\delta(\theta),\delta(w))$ in Theorem~\ref{thm:param-belief-lip}. In particular, a smaller $B$ improves the contraction constants and decreases $c_b(\delta(\theta),\delta(w))$.
% However, $B$ is not typically a free design parameter: reducing $B$ usually corresponds to rescaling or restricting the feature class $\phi_p,\phi_\Phi$, which may reduce model expressivity and can degrade realizability (e.g., flatter logits). Hence $B$ reflects a stability--expressivity tradeoff, and it can guide sufficient conditions for stability and convergence guarantees when conducting RLHF in POMDPs.

\bibliography{main}
\bibliographystyle{tmlr}

\appendix
\section{Appendix}

\begin{lemma}\label{lem:model-lip}
Given model dynamic assumption as in \ref{as:Model-LogLinear}, 
and finite feature bounds $B<\infty$, for all feasible parameter pairs $(\theta,w)$, $(\theta',w')$ with
$\|\theta-\theta'\|_2\le\delta(\theta)$, $\|w-w'\|_2\le\delta(w)$:
\begin{align}
&\|P_\theta(\cdot|s,a)-P_{\theta'}(\cdot|s,a)\|_1\le B\delta(\theta),
\qquad \forall (s,a)\in\bS\times\bA,
\label{eq:lip-in-param-kernel}\\[4pt]
&\|\Phi_{w}(\cdot\,|\,s)-\Phi_{w'}(\cdot\,|\,s)\|_1\le B\delta(w),
\qquad \forall s\in\bS,
\label{eq:lip-in-param-obs}\\[4pt]
&\|P_\theta(s'|\cdot,a)-P_{\theta'}(s'|\cdot,a)\|_1
\le
2B\frac{|\bS|}
{1+(|\bS|-1)\exp\bigl(-2B(\|\theta^\star\|_2+r_\Theta)\bigr)}
\,\delta(\theta),
\qquad \forall (a,s')\in\bA\times\bS,
\label{eq:lip-in-param-col-rowKernel}\\[4pt]
&\big\|\Phi_w(\hat s\mid \cdot)-\Phi_{w'}(\hat s\mid \cdot)\big\|_1
\le
2B\frac{|\bS|}
{1+(|\hat\bS|-1)\exp\bigl(-2B(\|w^\star\|_2+r_\Theta)\bigr)}
\,\delta(w),
\qquad \forall \hat s\in\hat\bS .
\label{eq:lip-in-param-col-rowObs}
\end{align}
\end{lemma}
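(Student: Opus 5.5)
We prove each of the four inequalities separately. All four come from the same device: write the kernel difference as an integral of the softmax derivative along the segment $\theta_u:=\theta'+u(\theta-\theta')$, $u\in[0,1]$, and likewise $w_u$ for the observation model. The feasible set $\bB_2(\Theta^\star,r_\Theta)$ is convex, so every intermediate parameter satisfies $\|\theta_u\|_2\le\|\theta^\star\|_2+r_\Theta$ and $\|w_u\|_2\le\|w^\star\|_2+r_\Theta$. This is exactly the norm bound that appears in the constants of Eq.~\ref{eq:lip-in-param-col-rowKernel}--Eq.~\ref{eq:lip-in-param-col-rowObs}.

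\textbf{Row bounds (Eq.~\ref{eq:lip-in-param-kernel} and Eq.~\ref{eq:lip-in-param-obs}).} Fix $(s,a)$ and write $p^u:=P_{\theta_u}(\cdot\mid s,a)$. Set $v_x:=(\theta-\theta')^\top\phi_p(s,a,x)$, so $|v_x|\le B\delta(\theta)$ by Cauchy--Schwarz. The derivative of the softmax in $u$ is
\[
\tfrac{d}{du}p^u_x=p^u_x\bigl(v_x-\textstyle\sum_{\bar x}p^u_{\bar x}v_{\bar x}\bigr).
\]
Hence $\|\tfrac{d}{du}p^u\|_1$ is the mean absolute deviation of a random variable $V$ that takes the value $v_x$ with probability $p^u_x$. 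Because $V$ lies in an interval of length at most $2B\delta(\theta)$, its mean absolute deviation is at most half the range, which is at most $B\delta(\theta)$. Integrating over $u\in[0,1]$ and using the triangle inequality gives Eq.~\ref{eq:lip-in-param-kernel}. The observation model is handled identically with $\phi_\Phi$ and $w$, giving Eq.~\ref{eq:lip-in-param-obs}.

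\textbf{Column bounds (Eq.~\ref{eq:lip-in-param-col-rowKernel} and Eq.~\ref{eq:lip-in-param-col-rowObs}).} Now fix $(a,s')$ and sum over the conditioning state $s$. For each $s$, the same derivative formula gives
\[
\Bigl|\tfrac{d}{du}P_{\theta_u}(s'\mid s,a)\Bigr|\le P_{\theta_u}(s'\mid s,a)\cdot 2B\delta(\theta).
\]
We then bound the probability uniformly. Any two logits $\theta_u^\top\phi_p(s,a,\cdot)$ differ by at most $2B\|\theta_u\|_2\le 2B(\|\theta^\star\|_2+r_\Theta)$. So every competing exponential is at least $e^{-2B(\|\theta^\star\|_2+r_\Theta)}$ times the one for $s'$, which gives
\[
P_{\theta_u}(s'\mid s,a)\le \frac{1}{1+(|\bS|-1)\exp\bigl(-2B(\|\theta^\star\|_2+r_\Theta)\bigr)}.
\]
Integrating in $u$ and summing over the $|\bS|$ values of $s$ yields Eq.~\ref{eq:lip-in-param-col-rowKernel}. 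For the observation model the sum still runs over the conditioning variable $s\in\bS$, which produces the factor $|\bS|$. The softmax, however, is over $\hat\bS$, so the denominator carries $|\hat\bS|-1$; this gives Eq.~\ref{eq:lip-in-param-col-rowObs}.

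The main technical point is the factor-$1$ constant in the row bounds, since a naive triangle inequality gives $2B\delta$. The mean-absolute-deviation-at-most-half-range argument handles it. Note also that the lemma is stated for pairs with $\|\theta-\theta'\|_2\le\delta(\theta)$, so $\delta(\cdot)$ here is a generic bound on the distance rather than the distance to $\Theta^\star$. All constants depend only on the feasible-ball radius through convexity of the ball, so the argument is otherwise routine.
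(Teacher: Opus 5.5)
Your proposal is correct and follows essentially the same route as the paper. For the row bounds, the paper bounds the mean absolute deviation $\bE|V-\bE V|$ of the softmax derivative by the standard deviation, $\sqrt{\mathrm{Var}}\le B$, where you use half the range. For the column bounds, both arguments use $|\partial P(s'\mid s,a)|\le 2B\delta\,P(s'\mid s,a)$ together with the same uniform pointwise softmax upper bound summed over $s$. Your integral-along-the-segment formulation is a slightly cleaner rendering of the paper's mean-value step for vector-valued maps, and it makes explicit the convexity of the feasible ball that the paper uses implicitly.
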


\begin{proof}
Fix $(s,a)\in\bS\times\bA$ and consider the probability vector \(
P_\theta(\cdot\mid s,a)\in\Delta_{\bS}
\). Denote the Jacobian of $\theta \rightarrow P_\theta(\cdot|s, a)$ by $J_{a,s}(\theta)\in \bR^{|\bS|\times d_\theta}$ where $d_\theta$ is the dimension of $\theta$, and the $s'$-th row is $\nabla_\theta P_\theta(s'|s,a)^\top$. We first establish the row-wise operator bound in \ref{eq:lip-in-param-kernel}. For $v\in\bR^{d_\theta}$ with $\|v\|_2=1$, we have
\begin{align*}
\|J_{a,s}(\theta)v\|_1
&= \sum_{s'\in\bS} \Big|\left(J_{a,s}(\theta)v\big)_{s'}\right|\\
&=\sum_{s'\in\bS} \left| P_\theta(s'\mid s,a)\left(\phi_p(s,a,s')^\top v-\sum_{s''\in\bS}P_\theta(s''\mid s,a)\phi_p(s,a,s'')^\top v
\right)\right|\\
&=\sum_{s'\in\bS}P_\theta(s'\mid s,a)\left| \phi_p(s,a,s')^\top v-\sum_{s''\in\bS}P_\theta(s''\mid s,a)\phi_p(s,a,s'')^\top v \right|\\
&=\bE_{S'\sim P_\theta(\cdot\mid s,a)}
\left[\left|\phi_p(s,a,S')^\top v - \bE_{\bar S\sim P_\theta(\cdot\mid s,a)}\big[\phi_p(s,a,\bar S)^\top v\big]\right|\right]\\
&\le B,
\end{align*}
where the last inequality follows from Cauchy--Schwarz and the fact that
\[
\bE\Big[\big|\phi_p(s,a,S')^\top v-\bE[\phi_p(s,a,S')^\top v]\big|\Big]
\le \sqrt{\operatorname{Var}(\phi_p(s,a,S')^\top v)} \le B.
\]
Therefore 
\[
\|J_{a,s}(\theta)\|_{2\to1}=\sup_{\|v\|_2=1}\|J_{a,s}(\theta)v\|_1\le B,
\]
and by the mean-value theorem, we have
\begin{align*}
\big\|P_\theta(\cdot\mid s,a)-P_{\theta'}(\cdot\mid s,a)\big\|_1
&\le \sup_{\hat\theta \text{ between } \theta \text{ and } \theta'}\|J_{a,s}(\hat\theta)\|_{2\to1}\,\|\theta-\theta'\|_2\\
&\le B\delta(\theta).
\end{align*}
The proof for $\Phi_w(\cdot\mid s)$ is identical, yielding $\|\Phi_{w}(\cdot\,|\,s)-\Phi_{w'}(\cdot\,|\,s)\|_1\le B\delta(w)$.

\medskip
\noindent
We next compute the coordinate-wise gradient bound needed for the column-wise estimates. For $s'\in\bS$,
\begin{align}
\Big\|\nabla_\theta P_\theta(s'\mid s,a)\Big\|_2
&=\Big\|\sum_{s''\in\bS}\frac{\partial\, P_\theta(s'\mid s,a)}{\partial(\theta^\top\phi_p(s,a,s''))}\;\nabla_\theta \theta^\top\phi_p(s,a,s'')\Big\|_2\notag\\
&=\Big\|\sum_{s''\in\bS}\big(\mathrm{Diag}(P_\theta(\cdot\mid s,a))-P_\theta(\cdot\mid s,a)P_\theta(\cdot\mid s,a)^\top\big)_{s's''}\,\phi_p(s,a,s'')\Big\|_2\notag\\
&=\Big\|P_\theta(s'\mid s,a)\Big(\phi_p(s,a,s')-\sum_{s''\in\bS} P_\theta(s''\mid s,a)\,\phi_p(s,a,s'')\Big)\Big\|_2\notag\\
&\le P_\theta(s'\mid s,a)\Big(\|\phi_p(s,a,s')\|_2+\sum_{s''}P_\theta(s''\mid s,a)\,\|\phi_p(s,a,s'')\|_2\Big)\notag\\
&\le P_\theta(s'\mid s,a)\,(B+B)\notag\\[4pt]
&=2B\,P_\theta(s'\mid s,a). \label{eq:transition-kernel-grad-bound}
\end{align}
where the first inequality follows by the triangle inequality and Jensen’s inequality.

\medskip
\noindent
Then, we establish a column–wise Lipschitz bound for the transition kernel (\ref{eq:lip-in-param-col-rowKernel}). Assume $\theta,\theta'\in\bB_2(r_\Theta;\theta^\star)$ and fix
$a\in\bA$, $s'\in\bS$. Let $J_{a,s'}(\theta)\in\bR^{|\bS|\times d_\theta}$ denote the Jacobian of $\theta \mapsto P_\theta(s'\mid \cdot\,,\, a)$, whose $s$–th row is
$\nabla_\theta P_\theta(s'\mid s,a)^\top$. By \ref{eq:transition-kernel-grad-bound}, for every $s\in\bS$,
\[
\big\|\nabla_\theta P_\theta(s'\mid s,a)\big\|_2
\le 2B\,P_\theta(s'\mid s,a).
\]
Therefore,
\begin{align*}
\|J_{a,s'}(\theta)\|_{2\to1}
&=\sup_{\|v\|_2=1}
  \sum_{s\in\bS}\big|\langle\nabla_\theta P_\theta(s'\mid s,a),v\rangle\big|\\
&\le \sum_{s\in\bS}\big\|\nabla_\theta P_\theta(s'\mid s,a)\big\|_2\\
&\le 2B\sum_{s\in\bS}P_\theta(s'\mid s,a).
\end{align*}
By Lemma \ref{lem:Dobrushin-row-col}, the column–sum is uniformly bounded on $\bB_2(r_\Theta;\theta^\star)$, hence
\[
\sum_{s\in\bS}P_\theta(s'\mid s,a)
\le
\frac{|\bS|}
{1+(|\bS|-1)\exp\bigl(-2B(\|\theta^\star\|_2+r_\Theta)\bigr)},
\]
and thus
\[
\|J_{a,s'}(\theta)\|_{2\to1}
\le
2B\frac{|\bS|}
{1+(|\bS|-1)\exp\bigl(-2B(\|\theta^\star\|_2+r_\Theta)\bigr)}.
\]
Applying the mean–value theorem gives
\begin{align*}
\sum_{s\in\bS}\big|P_\theta(s'\mid s,a)-P_{\theta'}(s'\mid s,a)\big|
&=\left\|\big(P_\theta(s'\mid s,a)\big)_{s\in\bS}
-\big(P_{\theta'}(s'\mid s,a)\big)_{s\in\bS}
\right\|_1\\
&\le \sup_{\hat\theta\text{ between }\theta,\theta'}
\|J_{a,s'}(\hat\theta)\|_{2\to1}\,\|\theta-\theta'\|_2\\
&\le
2B\frac{|\bS|}
{1+(|\bS|-1)\exp\bigl(-2B(\|\theta^\star\|_2+r_\Theta)\bigr)}
\,\|\theta-\theta'\|_2.
\end{align*}
If, in addition, $\|\theta-\theta'\|_2\le\delta(\theta)$, this yields
\[
\sum_{s\in\bS}\big|P_\theta(s'\mid s,a)-P_{\theta'}(s'\mid s,a)\big|
\le
2B\frac{|\bS|}
{1+(|\bS|-1)\exp\bigl(-2B(\|\theta^\star\|_2+r_\Theta)\bigr)}
\,\delta(\theta),
\qquad\forall\,s'\in\bS.
\]

\medskip
\noindent
To show the claim in \ref{eq:lip-in-param-col-rowObs}, fix $\hat s\in\hat\bS$ and consider $w\to\Phi_w(\hat s|\cdot)$ from $\bR^{d_w}$. Following a similar argument, we apply the mean-value theorem to the aforementioned map and bound the operator norm $\|J_{\hat s}(w)\|_{2\to 1}$ of its Jacobian $J_{\hat s}(w)\in\bR^{|\bS|\times d_w}$. First, we bound the $\ell_2$-norm of $\nabla_w \Phi_w(\hat s\mid s)^\top$ which is the $s$-th row of $J_{\hat s}(w)$,
\begin{align*}
\big\|\nabla_w \Phi_w(\hat s\mid s)\big\|_2
&= \|\Phi_w(\hat s\mid s)\Big(\phi_\Phi(s,\hat s)-\sum_{y\in\hat\bS}\Phi_w(y\mid s)\,\phi_\Phi(s,y)\Big)\|_2\\
&\le \Phi_w(\hat s\mid s)\Big(\|\phi_\Phi(s,\hat s)\|_2+\sum_{y\in\hat\bS}\Phi_w(y\mid s)\,\|\phi_\Phi(s,y)\|_2\Big)\\
&\le \Phi_w(\hat s\mid s)\,(B+B)
=2B\,\Phi_w(\hat s\mid s).
\end{align*}
\noindent Then, by Lemma~\ref{lem:Dobrushin-row-col}, inequality \ref{eq:Phi-column-sum-upperbound},
\begin{align*}
\|J_{\hat s}(w)\|_{2\to 1}
&=\sup_{\|v\|_2=1}\sum_{s\in\bS}\big|\langle\nabla_w \Phi_w(\hat s\mid s),v\rangle\big|
\;\le\;\sum_{s\in\bS}\big\|\nabla_w \Phi_w(\hat s\mid s)\big\|_2\\
&\le 2B\sum_{s\in\bS}\Phi_w(\hat s\mid s)\\[4pt]
&\le
2B\frac{|\bS|}
{1+(|\hat\bS|-1)\exp\bigl(-2B(\|w^\star\|_2+r_\Theta)\bigr)}.
\end{align*}
Now applying the mean-value theorem, for some $\tilde w$ on the line segment between $w$ and $w'$,
\begin{align*}
\big\|\Phi_w(\hat s\mid \cdot)-\Phi_{w'}(\hat s\mid \cdot)\big\|_1
&\le \|J_{\hat s}(\tilde w)\|_{2\to 1}\,\|w-w'\|_2\\
&\le
2B\frac{|\bS|}
{1+(|\hat\bS|-1)\exp\bigl(-2B(\|w^\star\|_2+r_\Theta)\bigr)}
\,\delta(w),
\end{align*}
and the result follows.
\end{proof}

In the following lemma, for simplicity, we consider the parameter set to be $\bB_2(\theta^\star,r_\Theta)\times \bB_2(w^\star,r_\Theta)$. This result also holds for the feasible parameter set, $\bB_2(\Theta^\star,r_\Theta)$, assumed in Assumption \ref{as:realizability-bounded-error}.

\begin{lemma}\label{lem:Dobrushin-row-col}
Under Assumption~\ref{as:Model-LogLinear}, in the neighborhoods
\(\theta\in\mathbb B_2(r_\Theta;\theta^\star)\), \(w\in\mathbb B_2(r_\Theta;w^\star)\), the following properties hold:
\begin{equation}\label{eq:column-sum-upperbound}
\sup_{\theta\in\mathbb B_2(r_\Theta;\theta^\star)}\sup_{a\in\bA}\sup_{s'\in\bS}\sum_{s\in\bS}P_\theta(s'\mid s,a)
\;\le\; \frac{|\bS|}{1+(|\bS|-1)\exp\bigl(-2B(\|\theta^\star\|_2+r_\Theta)\bigr)} ,
\end{equation}
\begin{equation}\label{eq:column-sum-lowerbound}
\inf_{\theta\in\mathbb B_2(r_\Theta;\theta^\star)}\inf_{a\in\bA}\inf_{s'\in\bS}\sum_{s\in\bS}P_\theta(s'\mid s,a)\;\ge\;\frac{|\bS|}
{1+(|\bS|-1)\exp\bigl(2B(\|\theta^\star\|_2+r_\Theta)\bigr)} .
\end{equation}
\begin{equation}\label{eq:Phi-column-sum-upperbound}
\sup_{w\in\mathbb B_2(r_\Theta;w^\star)}\sup_{\hat s\in\hat\bS}\sum_{s\in\bS}\Phi_w(\hat s\mid s)\;\le\;\frac{|\bS|}
{1+(|\hat\bS|-1)\exp\bigl(-2B(\|w^\star\|_2+r_\Theta)\bigr)} ,
\end{equation} 
\begin{equation}\label{eq:Phi-column-sum-lowerbound}
\inf_{w\in\mathbb B_2(r_\Theta;w^\star)}\inf_{\hat s\in\hat\bS}\sum_{s\in\bS}\Phi_w(\hat s\mid s)\;\ge\;\frac{|\bS|}
{1+(|\hat\bS|-1)\exp\bigl(2B(\|w^\star\|_2+r_\Theta)\bigr)} .
\end{equation}
\medskip\noindent
Furthermore, the uniform Dobrushin coefficients in Definition~\ref{def:dobrushin-controlled} satisfy
\begin{equation}\label{eq:dobrushin-row}
\kappa_P\;\ge\;\frac{|\bS|}
{1+(|\bS|-1)\exp\bigl(2B(\|\theta^\star\|_2+r_\Theta)\bigr)} ,
\end{equation}
\begin{equation}\label{eq:dobrushin-Phi}
\kappa_\Phi\;\ge\;\frac{|\hat\bS|}
{1+(|\hat\bS|-1)\exp\bigl(2B(\|w^\star\|_2+r_\Theta)\bigr)} .
\end{equation}
\end{lemma}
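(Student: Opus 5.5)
The plan is to reduce all six inequalities to one pointwise two-sided bound on the softmax probabilities. That bound follows from the fact that the logits are uniformly bounded on the parameter neighbourhood.

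First, fix \(\theta\in\mathbb B_2(r_\Theta;\theta^\star)\) and set \(R_\theta:=B(\|\theta^\star\|_2+r_\Theta)\). By Cauchy--Schwarz and Assumption~\ref{as:Model-LogLinear},
\[
|\theta^\top\phi_p(s,a,x)|\le \|\theta\|_2 B\le R_\theta
\]
for every \((s,a,x)\). Hence for any two next states \(x,\bar x\) the logit gap satisfies \(|\theta^\top\phi_p(s,a,\bar x)-\theta^\top\phi_p(s,a,x)|\le 2R_\theta\). Write each entry as
\[
P_\theta(x\mid s,a)=\Bigl(1+\sum_{\bar x\ne x}\exp\{\theta^\top\phi_p(s,a,\bar x)-\theta^\top\phi_p(s,a,x)\}\Bigr)^{-1},
\]
where the sum has \(|\bS|-1\) terms. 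This gives, uniformly in \((s,a,x)\) and in \(\theta\) in the ball,
\[
\frac{1}{1+(|\bS|-1)e^{2R_\theta}}\le P_\theta(x\mid s,a)\le \frac{1}{1+(|\bS|-1)e^{-2R_\theta}} .
\]
The same computation for \(\Phi_w\), with \(R_w:=B(\|w^\star\|_2+r_\Theta)\), uses \(|\hat\bS|-1\) competing outcomes. It gives
\[
\frac{1}{1+(|\hat\bS|-1)e^{2R_w}}\le\Phi_w(\hat s\mid s)\le \frac{1}{1+(|\hat\bS|-1)e^{-2R_w}} .
\]

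Second, the column-sum bounds \eqref{eq:column-sum-upperbound}--\eqref{eq:Phi-column-sum-lowerbound} follow by summing these entrywise bounds over the \(|\bS|\) conditioning states \(s\). The number of terms in the sum is \(|\bS|\) in both the transition and observation cases, while the count inside the denominator is the number of outcomes, \(|\bS|-1\) or \(|\hat\bS|-1\). This explains the mixed \(|\bS|\) and \(|\hat\bS|\) in \eqref{eq:Phi-column-sum-upperbound}--\eqref{eq:Phi-column-sum-lowerbound}. Taking sup or inf over \(\theta\), \(a\), \(s'\) (respectively \(w\), \(\hat s\)) is free because the bounds do not depend on them.

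Third, for the Dobrushin coefficients, use the same entrywise lower bound. For any \(s,s',a\) and any feasible \(\theta\), each term satisfies \(\min\{P_\theta(x\mid s,a),P_\theta(x\mid s',a)\}\ge (1+(|\bS|-1)e^{2R_\theta})^{-1}\). Summing over the \(|\bS|\) values of \(x\) and then taking the infimum gives \eqref{eq:dobrushin-row}. The observation analogue sums over the \(|\hat\bS|\) values of \(\hat s\), which gives the numerator \(|\hat\bS|\) in \eqref{eq:dobrushin-Phi}.

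Finally, Definition~\ref{def:dobrushin-controlled} takes the infimum over the feasible set \(\mathbb B_2(\Theta^\star,r_\Theta)\). This set is contained in the product of balls \(\mathbb B_2(\theta^\star,r_\Theta)\times\mathbb B_2(w^\star,r_\Theta)\) used here, so the bounds transfer. There is no real obstacle in this proof. The only care needed is bookkeeping: using \(\|\theta\|_2\le\|\theta^\star\|_2+r_\Theta\) to get the factor \(2R\) for the logit gap, and not mixing up the two cardinalities.
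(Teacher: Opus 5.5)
Your proposal is correct and follows essentially the same route as the paper: bound the logits uniformly by $B(\|\theta^\star\|_2+r_\Theta)$ (resp.\ $B(\|w^\star\|_2+r_\Theta)$), derive entrywise two-sided softmax bounds, sum them over the $|\bS|$ conditioning states to get the column sums, and sum the entrywise lower bound over outcomes to get the Dobrushin coefficients. Your logit-gap rewriting of each probability is equivalent to the paper's step of bounding the numerator from above and the remaining $|\bS|-1$ (or $|\hat\bS|-1$) denominator terms from below. Your remark that the feasible set lies inside the product of balls matches the paper's note just before the lemma.
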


\begin{proof}[Proof of Lemma~\ref{lem:Dobrushin-row-col}]
For any \((s,a,s')\in\bS\times\bA\times\bS\) and any \(\theta\in\bB_2(r_\Theta;\theta^\star)\), we have
\[
|\theta^\top\phi_p(s,a,s')|\le\|\theta\|_2\,\|\phi_p(s,a,s')\|_2\le B(\|\theta^\star\|_2+r_\Theta).
\]
Hence,
\begin{equation}\label{eq:score-funct-bounds}
\exp\bigl(-B(\|\theta^\star\|_2+r_\Theta)\bigr)\le\exp(\theta^\top\phi_p(s,a,s'))\le\exp\bigl(B(\|\theta^\star\|_2+r_\Theta)\bigr).
\end{equation}
Using the fact that the numerator term also appears in the denominator, we have
\begin{align}\label{eq:kernel-uni-upperbound}
P_\theta(s'\mid s,a)
&=\frac{\exp(\theta^\top\phi_p(s,a,s'))}{\sum_{\bar s\in\bS}\exp(\theta^\top\phi_p(s,a,\bar s))}\notag\\
&\le \frac{\exp\bigl(B(\|\theta^\star\|_2+r_\Theta)\bigr)}
{\exp\bigl(B(\|\theta^\star\|_2+r_\Theta)\bigr)+(|\bS|-1)\exp\bigl(-B(\|\theta^\star\|_2+r_\Theta)\bigr)}\notag\\
&=
\frac{1}
{1+(|\bS|-1)\exp\bigl(-2B(\|\theta^\star\|_2+r_\Theta)\bigr)} .
\end{align}
Summing over \(s\in\bS\) for fixed \((a,s')\) yields
\[
\sum_{s\in\bS}P_\theta(s'\mid s,a)\le\frac{|\bS|}{1+(|\bS|-1)\exp\bigl(-2B(\|\theta^\star\|_2+r_\Theta)\bigr)} .
\]
Taking the supremum over \((\theta,a,s')\) proves \ref{eq:column-sum-upperbound}. Similarly,
\begin{align}\label{eq:kernel-uni-lowerbound}
P_\theta(s'\mid s,a)
&\ge\frac{\exp\bigl(-B(\|\theta^\star\|_2+r_\Theta)\bigr)}{\exp\bigl(-B(\|\theta^\star\|_2+r_\Theta)\bigr)+(|\bS|-1)\exp\bigl(B(\|\theta^\star\|_2+r_\Theta)\bigr)}\notag\\
&=\frac{1}{1+(|\bS|-1)\exp\bigl(2B(\|\theta^\star\|_2+r_\Theta)\bigr)}=:P_{\min}.
\end{align}
Taking the infimum over \((\theta,a,s')\) and summing \(P_{\min}\) over \(s\in\bS\) proves \ref{eq:column-sum-lowerbound}. For the Dobrushin coefficient of \(P_\theta\) in \ref{eq:dobrushin-row}, every entry of \(P_\theta(\cdot\mid s,a)\) is bounded from below by \(P_{\min}\). Hence,
\begin{align}\label{eq:dobcoef-p-lowerbound}
\sum_{s' \in\bS}\min\{P_\theta(s'\mid s,a),P_\theta(s'\mid s'',a)\}
&\ge|\bS|\,P_{\min}\notag\\
&=\frac{|\bS|}
{1+(|\bS|-1)\exp\bigl(2B(\|\theta^\star\|_2+r_\Theta)\bigr)} .
\end{align}
Taking the infimum over \(\theta\in\mathbb B_2(r_\Theta;\theta^\star)\), \(a\in\bA\), and \(s,s''\in\bS\) yields \ref{eq:dobrushin-row}.

\medskip\noindent
Then, we prove \ref{eq:Phi-column-sum-lowerbound} and \ref{eq:Phi-column-sum-upperbound}, which provide uniform column-sum bounds for the observation model \(\Phi_w\). The proof is similar to the derivation of \ref{eq:column-sum-upperbound} and \ref{eq:column-sum-lowerbound} for \(P_\theta\). Fix \((s,\hat s)\in\bS\times\hat\bS\) and let \(w\in\mathbb B_2(r_\Theta;w^\star)\). By the feature bound \(\|\phi_\Phi(s,y)\|_2\le B\) and the radius assumption,
\[
|w^\top\phi_\Phi(s,y)|
\le\|w\|_2\,\|\phi_\Phi(s,y)\|_2
\le B(\|w^\star\|_2+r_\Theta),
\qquad \forall y\in\hat\bS.
\]
Hence, for all \(y\in\hat\bS\),
\[
\exp\bigl(-B(\|w^\star\|_2+r_\Theta)\bigr)\le\exp \big(w^\top\phi_\Phi(s,y)\big)\le\exp\bigl(B(\|w^\star\|_2+r_\Theta)\bigr).
\]
Recalling the log-linear observation model in Assumption~\ref{as:Model-LogLinear}, we obtain the uniform pointwise bounds
\begin{align}
\Phi_w(\hat s\mid s)
&\le\frac{\exp\bigl(B(\|w^\star\|_2+r_\Theta)\bigr)}
{\exp\bigl(B(\|w^\star\|_2+r_\Theta)\bigr)+(|\hat\bS|-1)\exp\bigl(-B(\|w^\star\|_2+r_\Theta)\bigr)}
\notag\\
&=\frac{1}
{1+(|\hat\bS|-1)\exp\bigl(-2B(\|w^\star\|_2+r_\Theta)\bigr)}
=:\Phi_{\max},\label{eq:Phi-pointwise-upper}\\[4pt]
\Phi_w(\hat s\mid s)&\ge\frac{\exp\bigl(-B(\|w^\star\|_2+r_\Theta)\bigr)}
{\exp\bigl(-B(\|w^\star\|_2+r_\Theta)\bigr)+(|\hat\bS|-1)\exp\bigl(B(\|w^\star\|_2+r_\Theta)\bigr)}\notag\\
&=\frac{1}{1+(|\hat\bS|-1)\exp\bigl(2B(\|w^\star\|_2+r_\Theta)\bigr)}
=:\Phi_{\min}.\label{eq:Phi-pointwise-lower}
\end{align}
Summing \ref{eq:Phi-pointwise-upper} and \ref{eq:Phi-pointwise-lower} over \(s\in\bS\) yields, for every fixed \(\hat s\in\hat\bS\),
\[
\sum_{s\in\bS}\Phi_w(\hat s\mid s)\le\frac{|\bS|}{1+(|\hat\bS|-1)\exp\bigl(-2B(\|w^\star\|_2+r_\Theta)\bigr)} ,
\]
\[
\sum_{s\in\bS}\Phi_w(\hat s\mid s)\ge\frac{|\bS|}{1+(|\hat\bS|-1)\exp\bigl(2B(\|w^\star\|_2+r_\Theta)\bigr)} .
\]
Taking the supremum and infimum over \((w,\hat s)\) gives \ref{eq:Phi-column-sum-upperbound} and \ref{eq:Phi-column-sum-lowerbound}, respectively. To obtain the Dobrushin coefficient in \ref{eq:dobrushin-Phi}, using \(\Phi_{\min}\) by the exact same argument used to obtain \(\kappa_P\), we have
\begin{align}\label{eq:dobcoef-Phi-lowerbound}
\sum_{\hat s \in\hat\bS}\min\{\Phi_w(\hat s\mid s),\Phi_w(\hat s\mid s'')\}
&\ge|\hat\bS|\,\Phi_{\min}\notag\\
&=\frac{|\hat\bS|}
{1+(|\hat\bS|-1)\exp\bigl(2B(\|w^\star\|_2+r_\Theta)\bigr)} .
\end{align}
Taking the infimum over \(w\in\mathbb B_2(r_\Theta;w^\star)\) and \(s,s''\in\bS\) proves \ref{eq:dobrushin-Phi}.
\end{proof}

\section{Proof of Theorem \ref{thm:param-belief-lip}}

\begin{remark}
First, recall \ref{eq:mu-min-max} where we assumed \(1>\nu_{\max}\ge\nu_0(s)\ge\nu_{\min}>0\). Plus, similar to the argument in Lemma \ref{lem:Dobrushin-row-col} that we have uniformly bounded \(P_\theta(s'|s,a)\) in Eq.~\ref{eq:kernel-uni-upperbound} and \ref{eq:kernel-uni-lowerbound}, and given the similar log-linear structure of models \(P_\theta\) and \(\Phi_w\) assumed in \ref{as:Model-LogLinear} and bounded feature maps in \ref{as:Model-LogLinear}, we can establish uniform bounds over \(\Phi_w\) and \(P_\theta\). Denote
\begin{align}
P_{\min}
&:=\frac{1}{1+(|\bS|-1)\exp\bigl(2B(\|\theta^\star\|_2+r_\Theta)\bigr)},
&
P_{\max}
&:=\frac{1}{1+(|\bS|-1)\exp\bigl(-2B(\|\theta^\star\|_2+r_\Theta)\bigr)},
\label{eq:model-elemenst-bounds-P}\\[2pt]
\Phi_{\min}
&:=\frac{1}{1+(|\hat\bS|-1)\exp\bigl(2B(\|w^\star\|_2+r_\Theta)\bigr)},
&
\Phi_{\max}
&:=\frac{1}{1+(|\hat\bS|-1)\exp\bigl(-2B(\|w^\star\|_2+r_\Theta)\bigr)}.
\label{eq:model-elemenst-bounds-Phi}
\end{align}
where for all \((s,s',a,\hat s)\) and \(\Theta\in\bB_2(\Theta^\star,r_\Theta)\) we have
\begin{align}
0<\nu_{\min}\;\le\;\nu_0(s)\;\le\;\nu_{\max},\quad
&0<P_{\min}\;\le\;P_\theta(s'\mid s,a)\;\le\;P_{\max}, \quad
0<\Phi_{\min}\;\le\;\Phi_w(\hat s\mid s)\;\le\;\Phi_{\max}.
\end{align}
\end{remark}

Besides, define the prediction and observation operators
\begin{equation}\label{eq:pred-op-kernel}
(P_\theta^{a_{t}}q)(s')
=\sum_{s\in\bS}P_\theta(s'\,|\,s,a_t)\,q(s),
\quad \Delta_\bS\to \Delta_\bS,
\end{equation}
\begin{equation}\label{eq:pred-op-observ}
(Q_w q)(\hat s)
=\sum_{s'\in\bS}\Phi_w(\hat s\mid s')\,q(s'),
\quad \Delta_\bS\to\Delta_{\hat\bS},
\end{equation}
and the Bayesian update operator
\[
\psi_{\Phi_w}:\Delta_\bS\times\hat\bS\to\Delta_\bS\cup\{0\},
\]
\[
\psi_{\Phi_w}(b,\hat{s})(s')=
\frac{\Phi_w(\hat s| s')b(s')}{\sum_{\bar s\in\bS}\Phi_w(\hat s| \bar s)b(\bar s)}
=\frac{\Phi_w(\hat s| s')b(s')}{(Q_w b)(\hat s)}.
\]
Then, the belief update \ref{eq:belief-formula} can be expressed as
\[
b_{t+1}^\Theta=\psi_{\Phi_w}(P_\theta^{a_t}b^\Theta_t,\, \hat s_{t+1}).
\]

Now, we can prove Theorem \ref{thm:param-belief-lip}.

\begin{proof}[Proof of Theorem \ref{thm:param-belief-lip}]\label{proof-thm-belief}
For two parameter sets \(\Theta\) and \(\Theta^\star\), we have
\[
b_{k+1}^\Theta=\psi_{\Phi_w} \big(P_\theta^{a_k} b_k^\Theta,\hat s_{k+1}\big),\qquad
b_{k+1}^{\Theta^\star}=\psi_{\Phi_{w^\star}} \big(P_{\theta^\star}^{a_k} b_k^{\Theta^\star},\hat s_{k+1}\big),
\qquad b_0^\Theta=b_0^{\Theta^\star}=\nu_0.
\]
We work under the conditional law \(\bP_{\Theta^\star}(\,\cdot\,|a_{0:t-1})\), i.e., the action sequence \((a_0,\dots,a_{t-1})\) is treated as \emph{fixed}. We use the filtrations
\[
\mathcal F_k^- := \sigma(\hat s_{1:k},a_{0:k-1}),\qquad
\mathcal F_k := \sigma(\hat s_{1:k},a_{0:k}),
\]
so that \(b_k^{\Theta^\star}\) and \(b_k^\Theta\) are \(\mathcal F_k^-\)-measurable and \(a_k\) is \(\mathcal F_k\)-measurable. In particular, the conditional law of \(\hat s_{k+1}\) given \(\mathcal F_k\) is
\[
\hat s_{k+1}\mid \mathcal F_k \sim Q_{w^\star}\!\Big(P_{\theta^\star}^{a_k} b_k^{\Theta^\star}\Big).
\]

Fix \(k\in\{0,\dots,t-1\}\). Using the filter recursions
\(b_{k+1}^{\Theta}=\psi_{\Phi_w}\!\big(P_\theta^{a_k} b_k^{\Theta},\hat s_{k+1}\big)\) and
\(b_{k+1}^{\Theta^\star}=\psi_{\Phi_{w^\star}}\!\big(P_{\theta^\star}^{a_k} b_k^{\Theta^\star},\hat s_{k+1}\big)\),
add and subtract the intermediate terms
\(\psi_{\Phi_{w^\star}}\!\big(P_{\theta}^{a_k} b_k^{\Theta},\hat s_{k+1}\big)\) and
\(\psi_{\Phi_{w^\star}}\!\big(P_{\theta^\star}^{a_k} b_k^{\Theta},\hat s_{k+1}\big)\):
\begin{align} 
\|b_{k+1}^\Theta-b_{k+1}^{\Theta^\star}\|_{\mathrm{TV}}
&=\big\|\psi_{\Phi_w}(P_\theta^{a_k} b_k^\Theta,\hat s_{k+1}) -\psi_{\Phi_{w^\star}}(P_{\theta^\star}^{a_k} b_k^{\Theta^\star},\hat s_{k+1})\big\|_{\mathrm{TV}}\\[8pt]
&\le \underbrace{\big\|\psi_{\Phi_w}(P_\theta^{a_k} b_k^\Theta,\hat s_{k+1}) -\psi_{\Phi_{w^\star}}(P_\theta^{a_k} b_k^{\Theta},\hat s_{k+1})\big\|_{\mathrm{TV}}}_{\text{(I)}} \label{eq:belief-theorem-term-I}\\[2pt]
&+\underbrace{\big\|\psi_{\Phi_{w^\star}}(P_\theta^{a_k} b_k^{\Theta},\hat s_{k+1}) -\psi_{\Phi_{w^\star}}(P_{\theta^\star}^{a_k} b_k^{\Theta},\hat s_{k+1})\big\|_{\mathrm{TV}}}_{\text{(II)}} \label{eq:belief-theorem-term-II}\\[2pt]
&+\underbrace{\big\|\psi_{\Phi_{w^\star}}(P_{\theta^\star}^{a_k} b_k^{\Theta},\hat s_{k+1}) -\psi_{\Phi_{w^\star}}(P_{\theta^\star}^{a_k} b_k^{\Theta^\star},\hat s_{k+1})\big\|_{\mathrm{TV}}}_{\text{(III)}}. \label{eq:belief-theorem-term-III}
\end{align}

\medskip
\noindent
\paragraph{Term (I).} This term corresponds to the perturbation of the observation model. First, consider the following observation for any generic \(b\in\Delta_{|\bS|}\):
\begin{align}\label{eq:bayes-op-param-perturb}
\|\psi_{\Phi_w}(b,\hat{s}) - \psi_{\Phi_{w^\star}}(b,\hat{s})\|_{\mathrm{TV}}
&=\frac12\sum_{s'\in\bS}\left|
\frac{\Phi_w(\hat s\mid s')\,b(s')}{\sum_{\bar s}\Phi_w(\hat s\mid \bar s)\,b(\bar s)}
-\frac{\Phi_{w^\star}(\hat s\mid s')\,b(s')}{\sum_{\bar s}\Phi_{w^\star}(\hat s\mid \bar s)\,b(\bar s)}
\right|\notag\\[6pt]
&\le \frac12\sum_{s'\in\bS}\left|
\frac{\bigl(\Phi_w(\hat s\mid s')-\Phi_{w^\star}(\hat s\mid s')\bigr)\,b(s')}{\sum_{\bar s}\Phi_{w^\star}(\hat s\mid \bar s)\,b(\bar s)}
\right|\notag\\
&\qquad\quad+\frac12\sum_{s'\in\bS}\left|
\Phi_w(\hat s\mid s')\,b(s')\Big(\frac{1}{\sum_{\bar s}\Phi_w(\hat s\mid \bar s)\,b(\bar s)} -\frac{1}{\sum_{\bar s}\Phi_{w^\star}(\hat s\mid \bar s)\,b(\bar s)}\Big)
\right|\notag\\[6pt]
&\le \frac{1}{2\sum_{\bar s}\Phi_{w^\star}(\hat s\mid \bar s)\,b(\bar s)}
\sum_{s'\in\bS} b(s')\,\bigl|\Phi_w(\hat s\mid s')-\Phi_{w^\star}(\hat s\mid s')\bigr|\notag\\
&\qquad\qquad+\frac12\Big(\sum_{s'\in\bS}\Phi_w(\hat s\mid s')\,b(s')\Big)
\Big|\frac{1}{\sum_{\bar s}\Phi_w(\hat s\mid \bar s)\,b(\bar s)}
-\frac{1}{\sum_{\bar s}\Phi_{w^\star}(\hat s\mid \bar s)\,b(\bar s)}\Big|\notag\\[6pt]
&= \frac{1}{2\sum_{\bar s}\Phi_{w^\star}(\hat s\mid \bar s)\,b(\bar s)}
\sum_{s'\in\bS} b(s')\,\bigl|\Phi_w(\hat s\mid s')-\Phi_{w^\star}(\hat s\mid s')\bigr|\notag\\
&\qquad\qquad
+\frac12\,\frac{\Big|\sum_{\bar s}\Phi_w(\hat s\mid \bar s)\,b(\bar s)
-\sum_{\bar s}\Phi_{w^\star}(\hat s\mid \bar s)\,b(\bar s)\Big|}
{\sum_{\bar s}\Phi_{w^\star}(\hat s\mid \bar s)\,b(\bar s)}\notag\\[6pt]
&\le \frac{1}{2\sum_{\bar s}\Phi_{w^\star}(\hat s\mid \bar s)\,b(\bar s)}
\sum_{s'\in\bS} b(s')\,\bigl|\Phi_w(\hat s\mid s')-\Phi_{w^\star}(\hat s\mid s')\bigr|\notag\\
&\qquad\qquad
+\frac{1}{2\sum_{\bar s}\Phi_{w^\star}(\hat s\mid \bar s)\,b(\bar s)}
\sum_{s'\in\bS} b(s')\,\bigl|\Phi_w(\hat s\mid s')-\Phi_{w^\star}(\hat s\mid s')\bigr|\notag\\[6pt]
&=\frac{\sum_{s'\in\bS}b(s')\bigl|\Phi_w(\hat s\mid s')-\Phi_{w^\star}(\hat s\mid s')\bigr|}
{\sum_{\bar s}\Phi_{w^\star}(\hat s\mid \bar s)\,b(\bar s)}.
\end{align}
Now we take the expectation under \(\hat s\sim (Q_{w^\star}b)(\cdot)\). Since \((Q_{w^\star}b)(\cdot)\) is a probability distribution on \(\hat\bS\) with non-zero support, we have
\begin{align}\label{eq:exp-denominator-eliminate}
\underset{\hat s\sim (Q_{w^\star}b)}{\bE}\!\left[
\frac{\sum_{s'\in\bS}b(s')\,\big|\Phi_w(\hat s\mid s')-\Phi_{w^\star}(\hat s\mid s')\big|}{(Q_{w^\star}b)(\hat s)}
\right]
&=\sum_{\hat s\in\hat\bS}(Q_{w^\star}b)(\hat s)\,
\frac{\sum_{s'}b(s')\,|\Phi_w(\hat s\mid s')-\Phi_{w^\star}(\hat s\mid s')|}{(Q_{w^\star}b)(\hat s)}\notag\\
&=\sum_{s'\in\bS}b(s')\sum_{\hat s\in\hat\bS}\big|\Phi_w(\hat s\mid s')-\Phi_{w^\star}(\hat s\mid s')\big|\notag\\
&=\sum_{s'\in\bS}b(s')\,\big\|\Phi_w(\cdot\mid s')-\Phi_{w^\star}(\cdot\mid s')\big\|_{1}\notag\\
&\le \sup_{s'\in\bS}\big\|\Phi_w(\cdot\mid s')-\Phi_{w^\star}(\cdot\mid s')\big\|_{1}\notag\\
&\le B\,\delta(w),
\end{align}
where in the last step we used Lemma \ref{lem:model-lip}. Now, we observe that for any measurable \(f:\hat\bS\to[0,1]\) and distributions \(\nu,\nu'\) on \(\hat\bS\),
\[
\bE_{\hat s\sim\nu}[f(\hat s)] \le \bE_{\hat s\sim\nu'}[f(\hat s)] + \|\nu-\nu'\|_{\mathrm{TV}}.
\]
Applying this with
\[
f(\hat s)=\|\psi_{\Phi_w}(P_\theta^{a_k}b_k^\Theta,\hat s)-\psi_{\Phi_{w^\star}}(P_\theta^{a_k}b_k^\Theta,\hat s)\|_{\mathrm{TV}}\in[0,1],
\]
\(\nu = Q_{w^\star}(P_{\theta^\star}^{a_k}b_k^{\Theta^\star})\), and
\(\nu' = Q_{w^\star}(P_{\theta}^{a_k}b_k^{\Theta})\), yields
\begin{align}\label{eq:termI-main-belief}
\bE\big[(\mathrm{I})\mid \mathcal F_k\big]
&=\underset{\hat s\sim Q_{w^\star}(P_{\theta^\star}^{a_k}b_k^{\Theta^\star})}{\bE}\!\Big[
\|\psi_{\Phi_w}(P_\theta^{a_k}b_k^\Theta,\hat s)-\psi_{\Phi_{w^\star}}(P_\theta^{a_k}b_k^\Theta,\hat s)\|_{\mathrm{TV}}
\Big]\notag\\
&\le
\underset{\hat s\sim Q_{w^\star}(P_{\theta}^{a_k}b_k^{\Theta})}{\bE}\!\Big[
\|\psi_{\Phi_w}(P_\theta^{a_k}b_k^\Theta,\hat s)-\psi_{\Phi_{w^\star}}(P_\theta^{a_k}b_k^\Theta,\hat s)\|_{\mathrm{TV}}
\Big]\notag\\
&\qquad\qquad
+\Big\|Q_{w^\star}(P_{\theta^\star}^{a_k}b_k^{\Theta^\star})-Q_{w^\star}(P_{\theta}^{a_k}b_k^{\Theta})\Big\|_{\mathrm{TV}}.
\end{align}
The first term is controlled by Eq.~\ref{eq:bayes-op-param-perturb}--Eq.~\ref{eq:exp-denominator-eliminate} with \(b=P_\theta^{a_k}b_k^\Theta\).

For the mismatch term in Eq.~\ref{eq:termI-main-belief}, since \(Q_{w^\star}\) is a kernel, its Dobrushin contraction gives
\begin{align}\label{eq:termI-mismatch-part-belief}
\Big\|Q_{w^\star}(P_{\theta^\star}^{a_k}b_k^{\Theta^\star})-Q_{w^\star}(P_{\theta}^{a_k}b_k^{\Theta})\Big\|_{\mathrm{TV}}
&\le (1-\kappa_\Phi)\,\Big\|P_{\theta^\star}^{a_k}b_k^{\Theta^\star}-P_{\theta}^{a_k}b_k^{\Theta}\Big\|_{\mathrm{TV}}\notag\\
&\le (1-\kappa_\Phi)\Big(
\Big\|P_{\theta^\star}^{a_k}b_k^{\Theta^\star}-P_{\theta^\star}^{a_k}b_k^{\Theta}\Big\|_{\mathrm{TV}}
+\Big\|P_{\theta^\star}^{a_k}b_k^{\Theta}-P_{\theta}^{a_k}b_k^{\Theta}\Big\|_{\mathrm{TV}}
\Big)\notag\\
&\le (1-\kappa_\Phi)\Big(
(1-\kappa_P)\,\|b_k^{\Theta^\star}-b_k^{\Theta}\|_{\mathrm{TV}}
+\frac{B}{2}\,\delta(\theta)
\Big).
\end{align}
Combining Eq.~\ref{eq:termI-main-belief}, Eq.~\ref{eq:exp-denominator-eliminate}, and Eq.~\ref{eq:termI-mismatch-part-belief}, we obtain
\begin{align}\label{eq:I-final-bound-thm}
\bE\big[(\mathrm{I})\mid \mathcal F_k\big]
\le
B\,\delta(w)
+\frac{B}{2}(1-\kappa_\Phi)\,\delta(\theta)
+(1-\kappa_P)(1-\kappa_\Phi)\,\|b_k^{\Theta^\star}-b_k^{\Theta}\|_{\mathrm{TV}}.
\end{align}

\paragraph{Term (II).} To control the perturbation of the transition kernel, we keep the sampling law of \(\hat s_{k+1}\) under the true model and add a change-of-measure step, as in Term (I):
\begin{align}\label{eq:termII-change-measure}
\bE\big[\mathrm{(II)}\mid \mathcal F_k\big]
&=
\bE_{\hat s\sim Q_{w^\star}(P_{\theta^\star}^{a_k} b_k^{\Theta^\star})}
\Big[\big\|\psi_{\Phi_{w^\star}}(P_{\theta}^{a_k} b_k^{\Theta},\hat s)-\psi_{\Phi_{w^\star}}(P_{\theta^\star}^{a_k} b_k^{\Theta},\hat s)\big\|_{\mathrm{TV}}\Big]\notag\\
&\le
\bE_{\hat s\sim Q_{w^\star}(P_{\theta^\star}^{a_k} b_k^{\Theta})}
\Big[\big\|\psi_{\Phi_{w^\star}}(P_{\theta}^{a_k} b_k^{\Theta},\hat s)-\psi_{\Phi_{w^\star}}(P_{\theta^\star}^{a_k} b_k^{\Theta},\hat s)\big\|_{\mathrm{TV}}\Big]\notag\\
&\qquad\quad+\Big\|Q_{w^\star}\!\big(P_{\theta^\star}^{a_k} b_k^{\Theta^\star}\big)-Q_{w^\star}\!\big(P_{\theta^\star}^{a_k} b_k^{\Theta}\big)\Big\|_{\mathrm{TV}}.
\end{align}
We now bound the first term in Eq.~\ref{eq:termII-change-measure} using Lemma 3.2 \cite{Mcdonald2024StochasticCriteria}, which gives
\begin{align}\label{eq:termII-lemma32-part}
\bE_{\hat s\sim Q_{w^\star}(P_{\theta^\star}^{a_k} b_k^{\Theta})}
\Big[\big\|\psi_{\Phi_{w^\star}}(P_{\theta}^{a_k} b_k^{\Theta},\hat s)-\psi_{\Phi_{w^\star}}(P_{\theta^\star}^{a_k} b_k^{\Theta},\hat s)\big\|_{\mathrm{TV}}\Big]
&\le (2-\kappa_\Phi)\,\|P_{\theta}^{a_k} b_k^{\Theta}-P_{\theta^\star}^{a_k} b_k^{\Theta}\|_{\mathrm{TV}}\notag\\
&= (2-\kappa_\Phi)\,\big\|P_{\theta}^{a_k}b_k^\Theta-P_{\theta^\star}^{a_k}b_k^\Theta\big\|_{\mathrm{TV}}\notag\\
&\le \frac{B}{2}(2-\kappa_\Phi)\delta(\theta),
\end{align}
where we have used
\begin{align*}
\bigl\|P_\theta^{a_k} b - P_{\theta^\star}^{a_k} b\bigr\|_{\mathrm{TV}}
&= \frac12 \sum_{s'\in\bS} \left| \sum_{s\in\bS} b(s)\,\Big(P_\theta(s'\mid s,a_k)-P_{\theta^\star}(s'\mid s,a_k)\Big) \right|\\[4pt]
&\le \frac12 \sum_{s'\in\bS} \sum_{s\in\bS} b(s)\, \Big|P_\theta(s'\mid s,a_k)-P_{\theta^\star}(s'\mid s,a_k)\Big| \\[4pt]
&= \frac12 \sum_{s\in\bS} b(s)\,\sum_{s'\in\bS} \Big|P_\theta(s'\mid s,a_k)-P_{\theta^\star}(s'\mid s,a_k)\Big|\\[4pt]
&\le\sup_{s\in\bS} \frac12 \sum_{s'\in\bS} \Big|P_\theta(s'\mid s,a_k)-P_{\theta^\star}(s'\mid s,a_k)\Big| \\[4pt]
&= \sup_{s\in\bS} \frac12\, \bigl\|P_\theta(\cdot\mid s,a_k)-P_{\theta^\star}(\cdot\mid s,a_k)\bigr\|_1\\[4pt]
&\le \frac{B}{2}\delta(\theta).
\end{align*}
It remains to bound the second term in Eq.~\ref{eq:termII-change-measure}. Using the Dobrushin coefficient of \(Q_{w^\star}\) and then of \(P_{\theta^\star}^{a_k}\) gives
\begin{align}\label{eq:termII-measure-mismatch-final}
\Big\|Q_{w^\star}\!\big(P_{\theta^\star}^{a_k} b_k^{\Theta^\star}\big)-Q_{w^\star}\!\big(P_{\theta^\star}^{a_k} b_k^{\Theta}\big)\Big\|_{\mathrm{TV}}
&\le (1-\kappa_\Phi)\,\big\|P_{\theta^\star}^{a_k} b_k^{\Theta^\star}-P_{\theta^\star}^{a_k} b_k^{\Theta}\big\|_{\mathrm{TV}}\notag\\
&\le (1-\kappa_\Phi)(1-\kappa_P)\,\|b_k^{\Theta^\star}-b_k^{\Theta}\|_{\mathrm{TV}}.
\end{align}
Combining Eq.~\ref{eq:termII-lemma32-part}--Eq.~\ref{eq:termII-measure-mismatch-final} yields
\begin{align}\label{eq:termII-final-bound}
\bE\big[\mathrm{(II)}\mid \mathcal F_k\big]
\le \frac{B}{2}(2-\kappa_\Phi)\,\delta(\theta)
+(1-\kappa_\Phi)(1-\kappa_P)\,\|b_k^{\Theta^\star}-b_k^{\Theta}\|_{\mathrm{TV}}.
\end{align}

\begin{remark}[Dominance condition]
We note that the dominance condition \(P^{a_k}_{\theta^\star}b^{\Theta}_k \ll P^{a_k}_{\theta}b^\Theta_k\) required for the application of filter stability results (e.g., Lemma 3.2 in \citet{Mcdonald2024StochasticCriteria}) is automatically satisfied for all \(k\) in our setting. Under the log-linear parameterization in Assumption \ref{as:Model-LogLinear}, the transition kernel \(P_\theta\) and observation model \(\Phi_w\), due to the softmax structure, are strictly positive everywhere, ensuring that \(P^{a_k}_{\theta}b^\Theta_k\) and \(P^{a_k}_{\theta^\star}b^{\Theta}_k\) remain mutually absolutely continuous.
\end{remark}

\paragraph{Term (III).} This term measures the propagation of error in belief for one time step. Under Assumption \ref{as:Dobrushin-stability}, and by Theorems 3.3 and 4.1 of \cite{Mcdonald2024StochasticCriteria}, for one step we have
\begin{align}\label{eq:one-step-ex-bound}
 \bE[\,\mathrm{(III)}\,|\, \mathcal{F}_k]&\le(1-\kappa_P)(2-\kappa_\Phi)\|b^\Theta_k-b_k^{\Theta^\star}\|_{\mathrm{TV}}.
\end{align}
Since \(b_0^\Theta=b_0^{\Theta^\star}=\nu_0\), we have
\[
\bE\Big[\|b_{0}^\Theta-b_{0}^{\Theta^\star}\|_{\mathrm{TV}}\Big]
=\|\nu_0-\nu_0\|_{\mathrm{TV}}=0.
\]
Putting all together, and unrolling over time yields
\begin{align}\label{eq:them-belief-lip-recursion}
\bE\,\Big[\|b_{k+1}^\Theta-b_{k+1}^{\Theta^\star}\|_{\mathrm{TV}}\Big]
&= \bE\Bigl[\bE\,\Big[\|b_{k+1}^\Theta-b_{k+1}^{\Theta^\star}\|_{\mathrm{TV}}\,\Big|\,\mathcal{F}_{k}\Big]\Bigr]\notag\\[4pt]
&\le \bE\Big[\bE\big[(\mathrm{I})\mid \mathcal F_k\big]+\bE\big[(\mathrm{II})\mid \mathcal F_k\big]+\bE\big[(\mathrm{III})\mid \mathcal F_k\big]\Big]\notag\\[4pt]
&\le \Big((1-\kappa_P)(2-\kappa_\Phi)+2(1-\kappa_P)(1-\kappa_\Phi)\Big)\,
\bE\Big[\|b_k^\Theta-b_k^{\Theta^\star}\|_{\mathrm{TV}}\Big]\notag\\
&\qquad\qquad\quad
+B\,\delta(w)+\frac{B}{2}\bigl((2-\kappa_\Phi)+(1-\kappa_\Phi)\bigr)\delta(\theta)\notag\\[4pt]
&=\alpha\,
\bE\Big[\|b_k^\Theta-b_k^{\Theta^\star}\|_{\mathrm{TV}}\Big]
+B\,\delta(w)+\frac{B}{2}(3-2\kappa_\Phi)\,\delta(\theta),
\end{align}
with \(\alpha=(1-\kappa_P)(4-3\kappa_\Phi)\). Unrolling Eq.~\ref{eq:them-belief-lip-recursion} and using \(\bE[\|b_0^\Theta-b_0^{\Theta^\star}\|_{\mathrm{TV}}]=0\) gives, for all \(k\ge 1\),
\begin{align}\label{eq:them-belief-lip-unrolled-final}
\bE\,\Big[\|b_{k}^\Theta-b_{k}^{\Theta^\star}\|_{\mathrm{TV}}\Big]
&\le \left(B\,\delta(w)+\frac{B}{2}(3-2\kappa_\Phi)\,\delta(\theta)\right)
\sum_{j=0}^{k-1}\alpha^{j}\notag\\[3pt]
&=
B\left(\delta(w)+\frac{3-2\kappa_\Phi}{2}\,\delta(\theta)\right)
\frac{1-\alpha^{k}}{1-\alpha}.
\end{align}
This concludes the proof.
\end{proof}

\section{Neural Network Extension}
\label{sec:nn-extension}

The linear approximation of the POMDP model studied earlier provides a setting in which the dependence of the POMDP kernels on the learned parameters can be controlled explicitly. However, log-linear scores may be too restrictive for complex models. We therefore consider a neural-softmax extension, where the scores defining \(P(\cdot\mid s,a)\) and \(\Phi(\cdot\mid s)\) are represented by neural networks.

This extension is motivated by standard results on over-parameterized neural networks. Such networks can interpolate training data under first-order optimization methods while still exhibiting strong generalization behavior \citep{Belkin2019,Bartlett2020,Zhang2021}. A central theoretical explanation is the lazy-training or neural tangent kernel (NTK) regime, where sufficiently wide networks trained near random initialization behave approximately like their first-order linearization around initialization \citep{JacotNTK2018,LiLiang2018,Du2019,OymakSoltanolkotabi2019,ChizatBach2019}. In this regime,
\[
F(x;W)\approx F(x;W_0)+\big\langle \nabla_W F(x;W_0),\,W-W_0\big\rangle ,
\]
so the tangent feature map \(\nabla_W F(x;W_0)\) plays the role of an effective feature representation. In the infinite-width limit, these tangent features induce an RKHS description, and finite-width networks approximate the corresponding kernel functions with errors of order \(O(m^{-1/2})\) under standard assumptions. 

In this section, we extend our belief-stability mechanism beyond the log-linear class. The neural-softmax model can be viewed as an approximate log-linear model in its NTK tangent features, with additional finite-width linearization errors \(\varepsilon_{p}^{\mathrm{NN}}(m,\,\delta_{\mathrm{NN}})\) and \(\varepsilon_\Phi^{\mathrm{NN}}(m, \delta_{\mathrm{NN}})\). These errors additively contribute to the belief perturbation bound and vanish as the lazy-training approximation improves.

\medskip
\begin{definition}[Symmetric random initialization]
\label{def:symmetric-random-initialization}
Assume the fixed width \(m\) is even. For each model component \(g\in\{p,\Phi\}\), let \(d_{x,g}\) denote the input dimension.
For \(i=1,\ldots,m/2\), sample independently
\(c_{g,i}\sim \operatorname{Rad},\,
\omega_{g,i,0}\sim \mathcal N(0,I_{d_{x,g}}),
\) and define the second half of the initialization by
\[
c_{g,i+m/2}:=-c_{g,i},
\qquad \omega_{g,i+m/2,0}:=\omega_{g,i,0}, \qquad i=1,\ldots,m/2 .
\]
Then,
\(
W_{g,0}:=(\omega_{g,1,0},\ldots,\omega_{g,m,0})
\) is called a symmetric random initialization for the two-layer ReLU network. 
\end{definition}

\medskip

\begin{definition}[NTK-linearized scores and tangent feature maps]
\label{def:nn-linearized-scores}
Let \(W_{p,0}\) and \(W_{\Phi,0}\) be reference initialization points for the transition and observation networks with parameters \(W_p\) and \(W_\Phi\), respectively. Let \(
x_p(s,a,s')\in\bR^{d_{x,p}},
x_\Phi(s,\hat s)\in\bR^{d_{x,\Phi}}
\) denote fixed input encodings of the transition and observation tuples, respectively. We write the trainable first-layer weights as
\[
W_p=(\omega_{p,1},\ldots,\omega_{p,m})\in\bR^{m\times d_{x,p}},
\qquad
W_\Phi=(\omega_{\Phi,1},\ldots,\omega_{\Phi,m})\in\bR^{m\times d_{x,\Phi}},
\]
and fix coefficients \(c_{p,i},c_{\Phi,i}\in\{-1,+1\}\). The neural scores are defined as
\begin{align}
F_p(s,a,s';W_p)
&:=\frac{1}{\sqrt m}\sum_{i=1}^m c_{p,i}\,
\operatorname{ReLU}\!\left(\omega_{p,i}^{\top}x_p(s,a,s')\right),
\label{eq:shallow-relu-score-p}\\
F_\Phi(s,\hat s;W_\Phi)
&:=\frac{1}{\sqrt m}\sum_{i=1}^m c_{\Phi,i}\,
\operatorname{ReLU}\!\left(\omega_{\Phi,i}^{\top}x_\Phi(s,\hat s)\right),
\label{eq:shallow-relu-score-Phi}
\end{align}
Throughout this section, norms on
\(W_p\) and \(W_\Phi\) are Frobenius norms. Equivalently, after vectorization, one may regard
\(W_p\in\bR^{d_p}\) and \(W_\Phi\in\bR^{d_\Phi}\), where
\(d_p=md_{x,p}\) and \(d_\Phi=md_{x,\Phi}\).

\begin{assumption}[Over-parameterized neural-softmax model]
\label{as:nn-model-linearization}
The neural-softmax model in Definition~\ref{def:nn-softmax-model} satisfies the following conditions.
\begin{itemize}
    \item \textbf{Realizability.} 
    There exists \(W^\star=(W_p^\star,W_\Phi^\star)\in\mathcal D\) generating the true POMDP.

    \item \textbf{Initialization.}
    The reference points \(W_{p,0}\) and \(W_{\Phi,0}\) are symmetric initializations as in Definition \ref{def:symmetric-random-initialization}, for the transition and observation networks, respectively. Under symmetric construction we have
    \begin{align}
    F_p(s,a,s';W_{p,0})=0,
    \qquad
    F_\Phi(s,\hat s;W_{\Phi,0})=0,
    \label{eq:init-zero-nn-scores}
    \end{align}
    for all \((s,a,s')\in\bS\times\bA\times\bS\) and \((s,\hat s)\in\bS\times\hat\bS\).

    \item \textbf{Uniformly bounded tangent features.}
    For some \(B^{\mathrm{NN}}<\infty\), the tangent feature maps in Definition~\ref{def:nn-linearized-scores} satisfy
    \begin{align}
    \sup_{s\in\bS,\;a\in\bA,\;s'\in\bS}
    \|\phi_p^{\mathrm{NN}}(s,a,s')\|_F
    &\le B^{\mathrm{NN}},\label{eq:bounded-tangent-p-nn}\\
    \sup_{s\in\bS,\;\hat s\in\hat\bS}\|\phi_\Phi^{\mathrm{NN}}(s,\hat s)\|_F &\le B^{\mathrm{NN}}.
    \label{eq:bounded-tangent-Phi-nn}
    \end{align}
    \end{itemize}
\end{assumption}

\paragraph{Linearization.} Define the first-order NTK-linearized scores around initialization by
\begin{align}
F_{p,W_p}^{\mathrm{lin}}(s,a,s')
&:=F_p(s,a,s';W_{p,0})+\big\langle\nabla_{W_p}F_p(s,a,s';W_{p,0}), W_p-W_{p,0} \big\rangle_F, \label{eq:nn-lin-score-p}\\
F_{\Phi,W_\Phi}^{\mathrm{lin}}(s,\hat s) &:=F_\Phi(s,\hat s;W_{\Phi,0})+\big\langle \nabla_{W_\Phi}F_\Phi(s,\hat s;W_{\Phi,0}), W_\Phi-W_{\Phi,0} \big\rangle_F.\label{eq:nn-lin-score-Phi}
\end{align}
The corresponding finite-width tangent feature maps are defined as
\begin{align}
\phi_p^{\mathrm{NN}}(s,a,s')
&:=\nabla_{W_p}F_p(s,a,s';W_{p,0}),\label{eq:def-tangent-p-nn}\\
\phi_\Phi^{\mathrm{NN}}(s,\hat s) &:=\nabla_{W_\Phi}F_\Phi(s,\hat s;W_{\Phi,0}).\label{eq:def-tangent-Phi-nn}
\end{align}

\end{definition}

\paragraph{Linearization event.} Let the bounded convex set \(\mathcal D=\mathcal D_p\times\mathcal D_\Phi\) denote a local lazy-training neighborhood around initialization, chosen so that the high-probability linearization bounds below hold. For \(\delta_{\mathrm{NN}}\in(0,1)\), define the high-probability linearization event
\begin{align}
\mathcal E_{\mathrm{lin}}^{\mathrm{NN}}(\delta_{\mathrm{NN}})
:=\Bigg\{&\sup_{W_p\in\mathcal D_p}\sup_{s,a,s'}
\left|F_p(s,a,s';W_p)-F_{p,W_p}^{\mathrm{lin}}(s,a,s')\right|
\le \varepsilon_p^{\mathrm{NN}}(m,\delta_{\mathrm{NN}}), \notag\\
&\sup_{W_\Phi\in\mathcal D_\Phi}\sup_{s,\hat s} \left|F_\Phi(s,\hat s;W_\Phi)-F_{\Phi,W_\Phi}^{\mathrm{lin}}(s,\hat s)\right| \le \varepsilon_\Phi^{\mathrm{NN}}(m,\delta_{\mathrm{NN}})
\Bigg\},\label{eq:nn-linearization-event}
\end{align}
where \(\bP(\mathcal E_{\mathrm{lin}}^{\mathrm{NN}}(\delta_{\mathrm{NN}}))\ge 1-\delta_{\mathrm{NN}}\).

\medskip
\medskip

\begin{remark}[Neural linearization event]\label{rem:nn-linearization-event}
Under Assumption~\ref{as:nn-model-linearization}, the event $\mathcal E_{\mathrm{lin}}^{\mathrm{NN}}(\delta_{\mathrm{NN}})$ is used to replace the neural-softmax scores in Definition~\ref{def:nn-softmax-model} by their NTK-linearized counterparts in Definition~\ref{def:nn-linearized-scores}. This event follows from standard two-layer ReLU arguments around symmetric initialization (see Lemma~2 of \citet{cayci2022finite}) that gives the corresponding local linearization control, while Lemma~4.1(7) of \citet{satpathi2020role} provides the required uniform control over bounded inputs. Applying these bounds separately to the transition and observation score networks, and taking a union bound over the two initializations, yields $\bP(\mathcal E_{\mathrm{lin}}^{\mathrm{NN}}(\delta_{\mathrm{NN}}))\ge 1-\delta_{\mathrm{NN}}$ with, for universal constants $C_p,C_\Phi>0$,
\[
\varepsilon_p^{\mathrm{NN}}(m_p,\delta_{\mathrm{NN}})\le
C_p\frac{R_p}{\sqrt{m_p}}\left(R_p+\sqrt{\log\frac{2}{\delta_{\mathrm{NN}}}}+\sqrt{d_p\log m_p}\right)\]
\[
\varepsilon_\Phi^{\mathrm{NN}}(m_\Phi,\delta_{\mathrm{NN}})\le C_\Phi\frac{R_\Phi}{\sqrt{m_\Phi}} \left(R_\Phi+\sqrt{\log\frac{2}{\delta_{\mathrm{NN}}}}+\sqrt{d_\Phi\log m_\Phi}\right),
\]
with
\[
R_p:=\sqrt{m_p}\sup_{W_p\in\mathcal D_p}\max_{i\in[m_p]}\|W_{p,i}-W_{p,i,0}\|_2, \qquad R_\Phi:=\sqrt{m_\Phi}\sup_{W_\Phi\in\mathcal D_\Phi}\max_{i\in[m_\Phi]}\|W_{\Phi,i}-W_{\Phi,i,0}\|_2 .
\]
 Throughout the neural-softmax analysis, all bounds involving Lemma~\ref{lem:nn-kernel-lip} and Corollary~\ref{cor:nn-param-belief-lip} are understood on this event.
\end{remark}

\medskip
\medskip

\begin{lemma}[Neural-softmax kernel perturbation bounds]
\label{lem:nn-kernel-lip}
Fix \(\delta_{\mathrm{NN}}\in(0,1)\) and work on the event
\(\mathcal E_{\mathrm{lin}}^{\mathrm{NN}}(\delta_{\mathrm{NN}})\). Assume Assumption~\ref{as:nn-model-linearization}. For \(W_p,W_p'\in\mathcal D_p\) and \(W_\Phi,W_\Phi'\in\mathcal D_\Phi\), let, for \(W\in\mathcal D_p\) and \(W'\in\mathcal D_\Phi\),
\begin{equation}\label{eq:delta-w-nn}
\delta^{\mathrm{NN}}(W):=\|W-W_{p,0}\|_F,\qquad
\delta^{\mathrm{NN}}(W'):=\|W'-W_{\Phi,0}\|_F.
\end{equation}

Then,

\begin{align}
&\bigl\| P_{W_p}(\cdot\mid s,a) - P_{W_p'}(\cdot\mid s,a) \bigr\|_{\mathrm{TV}}
\le \frac{1}{2}B^{\mathrm{NN}}\|W_p-W_p'\|_F + \varepsilon_{p}^{\mathrm{NN}}(m,\,\delta_{\mathrm{NN}}),\qquad \forall (s,a)\in\bS\times\bA. \label{eq:nn-row-p-param}\\[6pt]
&\bigl\| \Phi_{W_\Phi}(\cdot\mid s) - \Phi_{W_\Phi'}(\cdot\mid s) \bigr\|_{\mathrm{TV}}
\le \frac{1}{2}B^{\mathrm{NN}}\|W_\Phi-W_\Phi'\|_F + \varepsilon_\Phi^{\mathrm{NN}}(m,\,\delta_{\mathrm{NN}}), \qquad \forall s\in\bS.
\label{eq:nn-row-Phi-param}\\[6pt]
&\bigl\| P_{W_p}(s'\mid\cdot,a) - P_{W_p'}(s'\mid\cdot,a) \bigr\|_1
\le 2B^{\mathrm{NN}}c_p\|W_p-W_p'\|_F + 4c_p\varepsilon_{p}^{\mathrm{NN}}(m,\,\delta_{\mathrm{NN}}),
\qquad\forall (s',a)\in\bS\times\bA.
\label{eq:nn-col-p-param}\\[6pt]
&\bigl\| \Phi_{W_\Phi}(\hat s\mid\cdot) - \Phi_{W_\Phi'}(\hat s\mid\cdot) \bigr\|_1
\le 2B^{\mathrm{NN}}c_\Phi\|W_\Phi-W_\Phi'\|_F
+4c_\Phi\varepsilon_\Phi^{\mathrm{NN}}(m,\,\delta_{\mathrm{NN}}),
\qquad \forall \hat s\in\hat\bS,
\label{eq:nn-col-Phi-param}
\end{align}
where
\[
c_p:=
\frac{|\bS|}
{1+(|\bS|-1)\exp\!\bigl(-2(B^{\mathrm{NN}}r_p^{\mathrm{NN}}+\varepsilon_{p}^{\mathrm{NN}}(m,\,\delta_{\mathrm{NN}}))\bigr)},\qquad
c_\Phi :=
\frac{|\bS|}
{1+(|\hat\bS|-1)\exp\!\bigl(-2(B^{\mathrm{NN}}r_\Phi^{\mathrm{NN}}+\varepsilon_\Phi^{\mathrm{NN}}(m,\,\delta_{\mathrm{NN}}))\bigr)},
\]
and
\begin{equation}\label{eq:delta-max-w-nn}
r_p^{\mathrm{NN}}:=\sup_{\bar W_p\in\mathcal D_p}\|\bar W_p-W_{p,0}\|_F,\qquad
r_\Phi^{\mathrm{NN}}:=\sup_{\bar W_\Phi\in\mathcal D_\Phi}\|\bar W_\Phi-W_{\Phi,0}\|_F.
\end{equation}

\end{lemma}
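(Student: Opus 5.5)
The plan is to reduce everything to the log-linear computations of Lemma~\ref{lem:model-lip} and Lemma~\ref{lem:Dobrushin-row-col}. We work with the NTK-linearized scores, which are exactly log-linear in the tangent features $\phi_p^{\mathrm{NN}},\phi_\Phi^{\mathrm{NN}}$, and then pay for the gap between true and linearized scores through $\varepsilon_p^{\mathrm{NN}},\varepsilon_\Phi^{\mathrm{NN}}$ on the event $\mathcal E_{\mathrm{lin}}^{\mathrm{NN}}(\delta_{\mathrm{NN}})$.

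By \eqref{eq:init-zero-nn-scores}, $F^{\mathrm{lin}}_{p,W_p}(s,a,s')=\langle\phi_p^{\mathrm{NN}}(s,a,s'),W_p-W_{p,0}\rangle_F$. Define the linearized kernel $P^{\mathrm{lin}}_{W_p}$ as the softmax of these scores; it is log-linear with parameter $W_p-W_{p,0}$ and features bounded by $B^{\mathrm{NN}}$. Similarly define $\Phi^{\mathrm{lin}}_{W_\Phi}$.

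\textbf{Row bounds \eqref{eq:nn-row-p-param}--\eqref{eq:nn-row-Phi-param}.} Apply the triangle inequality
\[
\|P_{W_p}-P_{W_p'}\|_{\mathrm{TV}}\le \|P_{W_p}-P^{\mathrm{lin}}_{W_p}\|_{\mathrm{TV}}+\|P^{\mathrm{lin}}_{W_p}-P^{\mathrm{lin}}_{W_p'}\|_{\mathrm{TV}}+\|P^{\mathrm{lin}}_{W_p'}-P_{W_p'}\|_{\mathrm{TV}}
\]
at fixed $(s,a)$. The middle term is at most $\tfrac12B^{\mathrm{NN}}\|W_p-W_p'\|_F$ by the Jacobian $2\to1$ argument of Lemma~\ref{lem:model-lip}, applied verbatim in Frobenius geometry. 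For the outer terms we need a softmax perturbation bound in scores. If $f,g$ are score vectors with $\|f-g\|_\infty\le\varepsilon$, then along the segment the derivative of the softmax in direction $h=g-f$ has $\ell_1$ norm $\mathbb E|h-\mathbb E h|\le 2\varepsilon$ under the current softmax. Hence $\|\mathrm{softmax}(f)-\mathrm{softmax}(g)\|_{\mathrm{TV}}\le\varepsilon$, and each outer term is at most $\varepsilon_p^{\mathrm{NN}}$.

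This naive accounting gives $2\varepsilon_p^{\mathrm{NN}}$, whereas the statement has coefficient $1$. This is the main obstacle. To fix it, avoid passing through both linearizations. Instead, interpolate directly between the true score vectors $F_p(\cdot;W_p)$ and $F_p(\cdot;W_p')$. Their difference equals the linear difference $\langle\phi_p^{\mathrm{NN}},W_p-W_p'\rangle$ plus the difference of two remainders. Then bound the directional derivative of softmax by the $\mathbb E|h-\mathbb E h|$ quantity, splitting $h$ into its linear part, which yields the Cauchy--Schwarz/variance bound $\le B^{\mathrm{NN}}\|W_p-W_p'\|_F$, and its remainder part. For the remainder part, use that a centered mean absolute deviation of a variable with range at most $2\varepsilon$ is at most $\varepsilon$, which after halving gives coefficient $\le\varepsilon$ in TV. If this sharper step fails, I would accept the constant $2$ and note it is absorbed into $c_b^{\mathrm{NN}}$, which already carries a factor $2\varepsilon_\Phi^{\mathrm{NN}}$.

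\textbf{Column bounds \eqref{eq:nn-col-p-param}--\eqref{eq:nn-col-Phi-param}.} Follow the column part of Lemma~\ref{lem:model-lip}. Along the segment between the true scores, the gradient of a single entry $P(s'\mid s,a)$ with respect to the score perturbation is $P(s'\mid s,a)\,(h(s')-\mathbb E h)$, so its magnitude is at most $P(s'\mid s,a)\,(2B^{\mathrm{NN}}\|W_p-W_p'\|_F+4\varepsilon_p^{\mathrm{NN}})$. This uses $|h_{\mathrm{lin}}|\le B^{\mathrm{NN}}\|\Delta W\|_F$ and a remainder of size at most $2\varepsilon$, each doubled by the centering. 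Summing over $s$ gives the column sum $\sum_s P(s'\mid s,a)$ evaluated at intermediate scores.

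To bound this column sum, bound the intermediate scores uniformly. On the event, $|F_p|\le |F^{\mathrm{lin}}_p|+\varepsilon_p^{\mathrm{NN}}\le B^{\mathrm{NN}}r_p^{\mathrm{NN}}+\varepsilon_p^{\mathrm{NN}}$, using convexity of $\mathcal D_p$ and the definition \eqref{eq:delta-max-w-nn}; convex combinations of such score vectors obey the same bound. The pointwise softmax upper bound of Lemma~\ref{lem:Dobrushin-row-col} then gives column sum $\le c_p$. The observation model is handled identically with $|\hat\bS|$ in the denominator and $|\bS|$ columns, giving $c_\Phi$.
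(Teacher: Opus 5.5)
Your final argument is correct and gives exactly the stated constants, but by a different route from the paper.

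\textbf{The paper's route.} The paper keeps the three-term triangle inequality through the linearized kernels $P^{\mathrm{lin}}_{W_p},P^{\mathrm{lin}}_{W_p'}$. The middle term is the log-linear bound of Lemma~\ref{lem:model-lip} with score mismatch $\Delta_p^{\mathrm{lin}}\le B^{\mathrm{NN}}\|W_p-W_p'\|_F$. Each linearization gap is bounded by $\tfrac12\varepsilon_p^{\mathrm{NN}}$ in TV for rows, and by $2c_p\varepsilon_p^{\mathrm{NN}}$ in $\ell_1$ for columns. These sum to the stated $\varepsilon_p^{\mathrm{NN}}$ and $4c_p\varepsilon_p^{\mathrm{NN}}$.

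\textbf{Your obstacle is not real.} It comes only from bounding the mean absolute deviation by $2\|h\|_\infty$. The Cauchy--Schwarz step already used in Lemma~\ref{lem:model-lip} gives $\mathbb{E}|h-\mathbb{E}h|\le\sqrt{\mathrm{Var}(h)}\le\|h\|_\infty$. So each outer term is $\tfrac12\varepsilon_p^{\mathrm{NN}}$ in TV, and the triangle route yields coefficient $1$ with no extra work.

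\textbf{Comparison.} Your alternative is one mean-value step on the softmax between the two true score vectors. You split the score difference into $\langle\phi_p^{\mathrm{NN}},W_p-W_p'\rangle_F$ plus a remainder difference. This is equally valid and slightly more economical. A single interpolation handles both parts. Because you interpolate in score space, the column-sum bound only needs $|F_p|\le B^{\mathrm{NN}}r_p^{\mathrm{NN}}+\varepsilon_p^{\mathrm{NN}}$ at the two endpoints, so convexity of $\mathcal D_p$ is not actually used. The paper's decomposition is more modular instead: it exhibits the neural kernel as a log-linear model in tangent features plus an additive linearization perturbation. That is the viewpoint reused in Lemma~\ref{lem:nn-Dobrushin-row-col} and Corollary~\ref{cor:nn-param-belief-lip}.

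\textbf{One slip to fix.} The remainder difference satisfies $|h_{\mathrm{rem}}|\le 2\varepsilon_p^{\mathrm{NN}}$. Its range can therefore be $4\varepsilon_p^{\mathrm{NN}}$, not $2\varepsilon_p^{\mathrm{NN}}$, and its mean absolute deviation is at most $2\varepsilon_p^{\mathrm{NN}}$, not $\varepsilon_p^{\mathrm{NN}}$. Halving gives exactly $\varepsilon_p^{\mathrm{NN}}$ in TV. So your conclusion stands, but the written justification is off by a factor of two. Your fallback of accepting constant $2$ is unnecessary.
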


\begin{proof}
Define the linearized score mismatches
\begin{align}
\Delta_p^{\mathrm{lin}}(W_p,W_p') &:= \sup_{s\in\bS,\;a\in\bA,\;s'\in\bS} \left| F_{p,W_p}^{\mathrm{lin}}(s,a,s') - F_{p,W_p'}^{\mathrm{lin}}(s,a,s') \right|,
\label{eq:def-delta-p-lin}\\
\Delta_\Phi^{\mathrm{lin}}(W_\Phi,W_\Phi') &:= \sup_{s\in\bS,\;\hat s\in\hat\bS} \left| F_{\Phi,W_\Phi}^{\mathrm{lin}}(s,\hat s) - F_{\Phi,W_\Phi'}^{\mathrm{lin}}(s,\hat s) \right|.
\label{eq:def-delta-Phi-lin}
\end{align}
By the symmetric Xavier initialization in Assumption~\ref{as:nn-model-linearization}, Eq.~\ref{eq:nn-lin-score-p} becomes
\[
F_{p,W_p}^{\mathrm{lin}}(s,a,s') = \big\langle \phi_p^{\mathrm{NN}}(s,a,s'),\,W_p-W_{p,0}\big\rangle_F .
\]
Thus, given initialization, the transition kernel \(P_{W_p}^{\mathrm{lin}}(\cdot\mid s,a)\), obtained by using \(F_{p,W_p}^{\mathrm{lin}}\) in Eq.~\ref{eq:nn-softmax-transition}, is a log-linear model with feature map \(\phi_p^{\mathrm{NN}}\) and parameter \(W_p-W_{p,0}\).
Since \(P_{W_p}^{\mathrm{lin}}(\cdot\mid s,a)\) is a log-linear softmax model with feature map \(\phi_p^{\mathrm{NN}}\), parameter \(W_p-W_{p,0}\), and feature bound \(B^{\mathrm{NN}}\), the row-wise part of Lemma~\ref{lem:model-lip} applies with \(\theta\) replaced by \(W_p-W_{p,0}\). Therefore,
\begin{align}
\bigl\|P_{W_p}^{\mathrm{lin}}(\cdot\mid s,a) - P_{W_p'}^{\mathrm{lin}}(\cdot\mid s,a)\bigr\|_{\mathrm{TV}}
&\le \frac{B^{\mathrm{NN}}}{2}\|W_p-W_p'\|_F .
\label{eq:aux-lin-param-bound-tv}
\end{align}
Moreover, by the same row-wise argument used in Lemma~\ref{lem:model-lip}, applied directly to the score vectors, we also have
\begin{align}
\bigl\|P_{W_p}^{\mathrm{lin}}(\cdot\mid s,a) - P_{W_p'}^{\mathrm{lin}}(\cdot\mid s,a)\bigr\|_{\mathrm{TV}}
&\le \frac{1}{2}\Delta_p^{\mathrm{lin}}(W_p,W_p').
\label{eq:aux-lin-score-bound-tv}
\end{align}
Moreover, the definition of \(F_{p,W_p}^{\mathrm{lin}}\) and the uniform tangent-feature bound imply
\begin{align}
\Delta_p^{\mathrm{lin}}(W_p,W_p')
&\le B^{\mathrm{NN}}\|W_p-W_p'\|_F ,
\label{eq:aux-delta-param-tv}
\end{align}
then,
\[
\left|F_{p,W_p}^{\mathrm{lin}}(s,a,s')-F_{p,W_p'}^{\mathrm{lin}}(s,a,s')\right|
=
\left|\big\langle \phi_p^{\mathrm{NN}}(s,a,s'),\,W_p-W_p'\big\rangle_F\right|
\le B^{\mathrm{NN}}\|W_p-W_p'\|_F .
\]

Next, by the definition of the score-linearization error,
\[
\sup_{s,a,s'}\left| F_p(s,a,s';W_p)-F_{p,W_p}^{\mathrm{lin}}(s,a,s') \right| \le \varepsilon_{p}^{\mathrm{NN}}(m,\,\delta_{\mathrm{NN}}),
\]
and the same bound holds with \(W_p'\) in place of \(W_p\). Applying the same row-wise score argument used in Lemma~\ref{lem:model-lip} to the nonlinear and linearized score vectors gives
\begin{align}
\bigl\|P_{W_p}(\cdot\mid s,a)-P_{W_p}^{\mathrm{lin}}(\cdot\mid s,a)\bigr\|_{\mathrm{TV}}
&\le \frac{1}{2}\varepsilon_{p}^{\mathrm{NN}}(m,\,\delta_{\mathrm{NN}}),
\label{eq:aux-nonlinear-lin-p-tv}\\
\bigl\|P_{W_p'}(\cdot\mid s,a)-P_{W_p'}^{\mathrm{lin}}(\cdot\mid s,a)\bigr\|_{\mathrm{TV}}
&\le \frac{1}{2}\varepsilon_{p}^{\mathrm{NN}}(m,\,\delta_{\mathrm{NN}}).
\label{eq:aux-nonlinear-lin-p-prime-tv}
\end{align}
Combining Eq.~\ref{eq:aux-lin-score-bound-tv}, Eq.~\ref{eq:aux-nonlinear-lin-p-tv}, and Eq.~\ref{eq:aux-nonlinear-lin-p-prime-tv} by the triangle inequality yields
\begin{align*}
\bigl\|P_{W_p}(\cdot\mid s,a)-P_{W_p'}(\cdot\mid s,a)\bigr\|_{\mathrm{TV}}
&\le \bigl\|P_{W_p}(\cdot\mid s,a)-P_{W_p}^{\mathrm{lin}}(\cdot\mid s,a)\bigr\|_{\mathrm{TV}} \\
&\quad+ \bigl\|P_{W_p}^{\mathrm{lin}}(\cdot\mid s,a)-P_{W_p'}^{\mathrm{lin}}(\cdot\mid s,a)\bigr\|_{\mathrm{TV}} \\
&\quad+ \bigl\|P_{W_p'}^{\mathrm{lin}}(\cdot\mid s,a)-P_{W_p'}(\cdot\mid s,a)\bigr\|_{\mathrm{TV}}\\
&\le \frac{1}{2}\Delta_p^{\mathrm{lin}}(W_p,W_p')+\varepsilon_{p}^{\mathrm{NN}}(m,\,\delta_{\mathrm{NN}}).
\end{align*}
Using Eq.~\ref{eq:aux-delta-param-tv} proves the claim.

It remains to prove the column-wise transition bound. For any \(W_p\in\mathcal D_p\), the initialization condition and tangent-feature bound imply
\[
\left|F_{p,W_p}^{\mathrm{lin}}(s,a,s')\right|
=
\left|\big\langle \phi_p^{\mathrm{NN}}(s,a,s'),\,W_p-W_{p,0}\big\rangle_F\right|
\le B^{\mathrm{NN}}\delta^{\mathrm{NN}}(W_p)
\le B^{\mathrm{NN}}r_p^{\mathrm{NN}}.
\]
Hence, for every \(s,s'\in\bS\),
\[
P_{W_p}^{\mathrm{lin}}(s'\mid s,a)
\le
\frac{1}{1+(|\bS|-1)\exp\!\bigl(-2B^{\mathrm{NN}}r_p^{\mathrm{NN}}\bigr)}
\le
\frac{1}{1+(|\bS|-1)\exp\!\bigl(-2(B^{\mathrm{NN}}r_p^{\mathrm{NN}}+\varepsilon_{p}^{\mathrm{NN}}(m,\,\delta_{\mathrm{NN}}))\bigr)}.
\]
Therefore,
\[
\sum_{s\in\bS}P_{W_p}^{\mathrm{lin}}(s'\mid s,a)\le c_p.
\]
Repeating the column-wise Jacobian argument in Lemma~\ref{lem:model-lip}, now applied to the linearized model and using score mismatch \(\Delta_p^{\mathrm{lin}}(W_p,W_p')\), gives
\[
\bigl\|P_{W_p}^{\mathrm{lin}}(s'\mid\cdot,a)-P_{W_p'}^{\mathrm{lin}}(s'\mid\cdot,a)\bigr\|_1
\le 2c_p\,\Delta_p^{\mathrm{lin}}(W_p,W_p').
\]
Moreover, the score-linearization error implies
\[
\sup_{s,a,s'}|F_p(s,a,s';W_p)|\le B^{\mathrm{NN}}\delta^{\mathrm{NN}}(W_p)+\varepsilon_{p}^{\mathrm{NN}}(m,\,\delta_{\mathrm{NN}})\le B^{\mathrm{NN}}r_p^{\mathrm{NN}}+\varepsilon_{p}^{\mathrm{NN}}(m,\,\delta_{\mathrm{NN}}).
\]
Thus, for the same coordinate-wise softmax-Jacobian estimate,
\[
\bigl\|P_{W_p}(s'\mid\cdot,a)-P_{W_p}^{\mathrm{lin}}(s'\mid\cdot,a)\bigr\|_1
\le 2c_p\,\varepsilon_{p}^{\mathrm{NN}}(m,\,\delta_{\mathrm{NN}}),
\]
and the same bound holds with \(W_p'\) in place of \(W_p\). By the triangle inequality,
\begin{align*}
\bigl\|P_{W_p}(s'\mid\cdot,a)-P_{W_p'}(s'\mid\cdot,a)\bigr\|_1
&\le \bigl\|P_{W_p}(s'\mid\cdot,a)-P_{W_p}^{\mathrm{lin}}(s'\mid\cdot,a)\bigr\|_1\\
&\quad+\bigl\|P_{W_p}^{\mathrm{lin}}(s'\mid\cdot,a)-P_{W_p'}^{\mathrm{lin}}(s'\mid\cdot,a)\bigr\|_1\\
&\quad+\bigl\|P_{W_p'}^{\mathrm{lin}}(s'\mid\cdot,a)-P_{W_p'}(s'\mid\cdot,a)\bigr\|_1\\
&\le 2c_p\,\Delta_p^{\mathrm{lin}}(W_p,W_p')+4c_p\,\varepsilon_{p}^{\mathrm{NN}}(m,\,\delta_{\mathrm{NN}}).
\end{align*}
Using Eq.~\ref{eq:aux-delta-param-tv} gives the parameter-distance version.

The proof for the observation kernel is the same, replacing the transition score family \(F_p\) by \(F_\Phi\), the parameter \(W_p\) by \(W_\Phi\), the tangent feature map \(\phi_p^{\mathrm{NN}}\) by \(\phi_\Phi^{\mathrm{NN}}\), and the softmax output space \(\bS\) by \(\hat\bS\). The resulting fixed-\(\hat s\) column is summed over \(s\in\bS\), which gives the constant \(c_\Phi\). This proves Eq.~\ref{eq:nn-row-Phi-param}--Eq.~\ref{eq:nn-col-Phi-param}.
\end{proof}

\begin{lemma}[Neural-softmax column sums and Dobrushin bounds]\label{lem:nn-Dobrushin-row-col}
Fix \(\delta_{\mathrm{NN}}\in(0,1)\) and work on the event
\(\mathcal E_{\mathrm{lin}}^{\mathrm{NN}}(\delta_{\mathrm{NN}})\).
Assume Assumption~\ref{as:nn-model-linearization}. Then, the following properties hold.
\begin{equation}\label{eq:nn-column-sum-upperbound}
\sup_{W_p\in\mathcal D_p}\sup_{a\in\bA}\sup_{s'\in\bS}\sum_{s\in\bS}P_{W_p}(s'\mid s,a)
\le
\frac{|\bS|}{1+(|\bS|-1)\exp\!\bigl(-2(B^{\mathrm{NN}}r_p^{\mathrm{NN}}+\varepsilon_{p}^{\mathrm{NN}}(m,\,\delta_{\mathrm{NN}}))\bigr)} .
\end{equation}
\begin{equation}\label{eq:nn-column-sum-lowerbound}
\inf_{W_p\in\mathcal D_p}\inf_{a\in\bA}\inf_{s'\in\bS}\sum_{s\in\bS}P_{W_p}(s'\mid s,a)
\ge
\frac{|\bS|}{1+(|\bS|-1)\exp\!\bigl(2(B^{\mathrm{NN}}r_p^{\mathrm{NN}}+\varepsilon_{p}^{\mathrm{NN}}(m,\,\delta_{\mathrm{NN}}))\bigr)} .
\end{equation}
\begin{equation}\label{eq:nn-Phi-column-sum-upperbound}
\sup_{W_\Phi\in\mathcal D_\Phi}\sup_{\hat s\in\hat\bS}\sum_{s\in\bS}\Phi_{W_\Phi}(\hat s\mid s)
\le
\frac{|\bS|}{1+(|\hat\bS|-1)\exp\!\bigl(-2(B^{\mathrm{NN}}r_\Phi^{\mathrm{NN}}+\varepsilon_\Phi^{\mathrm{NN}}(m,\,\delta_{\mathrm{NN}}))\bigr)} .
\end{equation}
\begin{equation}\label{eq:nn-Phi-column-sum-lowerbound}
\inf_{W_\Phi\in\mathcal D_\Phi}\inf_{\hat s\in\hat\bS}\sum_{s\in\bS}\Phi_{W_\Phi}(\hat s\mid s)
\ge
\frac{|\bS|}{1+(|\hat\bS|-1)\exp\!\bigl(2(B^{\mathrm{NN}}r_\Phi^{\mathrm{NN}}+\varepsilon_\Phi^{\mathrm{NN}}(m,\,\delta_{\mathrm{NN}}))\bigr)} .
\end{equation}
\medskip\noindent
Furthermore, the uniform neural Dobrushin coefficients defined as
\[
\kappa_P^{\mathrm{NN}}
:=
\inf_{W_p\in\mathcal D_p}\inf_{a\in\bA}\inf_{s,s''\in\bS}
\sum_{x\in\bS}\min\{P_{W_p}(x\mid s,a),P_{W_p}(x\mid s'',a)\},
\]
\[
\kappa_\Phi^{\mathrm{NN}}
:=
\inf_{W_\Phi\in\mathcal D_\Phi}\inf_{s,s''\in\bS}
\sum_{\hat s\in\hat\bS}\min\{\Phi_{W_\Phi}(\hat s\mid s),\Phi_{W_\Phi}(\hat s\mid s'')\},
\]

satisfy
\begin{equation}\label{eq:nn-dobrushin-row}
\kappa_P^{\mathrm{NN}}
\ge
\frac{|\bS|}{1+(|\bS|-1)\exp\!\bigl(2(B^{\mathrm{NN}}r_p^{\mathrm{NN}}+\varepsilon_{p}^{\mathrm{NN}}(m,\,\delta_{\mathrm{NN}}))\bigr)} ,
\end{equation}
\begin{equation}\label{eq:nn-dobrushin-Phi}
\kappa_\Phi^{\mathrm{NN}}
\ge
\frac{|\hat\bS|}{1+(|\hat\bS|-1)\exp\!\bigl(2(B^{\mathrm{NN}}r_\Phi^{\mathrm{NN}}+\varepsilon_\Phi^{\mathrm{NN}}(m,\,\delta_{\mathrm{NN}}))\bigr)} .
\end{equation}

\end{lemma}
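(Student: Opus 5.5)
The plan is to follow the proof of Lemma~\ref{lem:Dobrushin-row-col} almost verbatim. The only new ingredient is a uniform bound on the \emph{nonlinear} neural scores over $\mathcal D$, which I obtain on the event $\mathcal E_{\mathrm{lin}}^{\mathrm{NN}}(\delta_{\mathrm{NN}})$.

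\textbf{Step 1: uniform score bound.} By the symmetric initialization in Assumption~\ref{as:nn-model-linearization}, $F_p(s,a,s';W_{p,0})=0$. The linearized score in Eq.~\ref{eq:nn-lin-score-p} therefore reduces to $\langle \phi_p^{\mathrm{NN}}(s,a,s'),W_p-W_{p,0}\rangle_F$. Cauchy--Schwarz in Frobenius norm and the tangent-feature bound Eq.~\ref{eq:bounded-tangent-p-nn} give $|F^{\mathrm{lin}}_{p,W_p}(s,a,s')|\le B^{\mathrm{NN}}\|W_p-W_{p,0}\|_F\le B^{\mathrm{NN}}r_p^{\mathrm{NN}}$. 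On the linearization event, the triangle inequality then yields, uniformly over $W_p\in\mathcal D_p$ and all $(s,a,s')$,
\[
|F_p(s,a,s';W_p)|\le M_p:=B^{\mathrm{NN}}r_p^{\mathrm{NN}}+\varepsilon_p^{\mathrm{NN}}(m,\delta_{\mathrm{NN}}).
\]
The same argument gives $|F_\Phi(s,\hat s;W_\Phi)|\le M_\Phi:=B^{\mathrm{NN}}r_\Phi^{\mathrm{NN}}+\varepsilon_\Phi^{\mathrm{NN}}(m,\delta_{\mathrm{NN}})$.

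\textbf{Step 2: pointwise softmax bounds.} With $|F|\le M$ on every score entry, repeat the computation in Eq.~\ref{eq:kernel-uni-upperbound}--Eq.~\ref{eq:kernel-uni-lowerbound}. For the upper bound, make the numerator as large as possible and the other $|\bS|-1$ denominator terms as small as possible; for the lower bound, do the reverse. This gives
\[
\frac{1}{1+(|\bS|-1)e^{2M_p}}\le P_{W_p}(s'\mid s,a)\le \frac{1}{1+(|\bS|-1)e^{-2M_p}},
\]
and the analogous bounds for $\Phi_{W_\Phi}$ with $|\hat\bS|$ and $M_\Phi$.

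\textbf{Step 3: column sums and Dobrushin coefficients.} Summing the pointwise bounds over $s\in\bS$ for fixed $(a,s')$, respectively fixed $\hat s$, gives Eq.~\ref{eq:nn-column-sum-upperbound}--Eq.~\ref{eq:nn-Phi-column-sum-lowerbound}. For each of the observation column sums, the factor $|\bS|$ multiplies a pointwise bound whose denominator involves $|\hat\bS|-1$. For the Dobrushin coefficients, each term $\min\{P_{W_p}(x\mid s,a),P_{W_p}(x\mid s'',a)\}$ is at least the pointwise minimum. Summing over $x\in\bS$ gives $\kappa_P^{\mathrm{NN}}\ge|\bS|/(1+(|\bS|-1)e^{2M_p})$, and summing over $\hat s\in\hat\bS$ gives Eq.~\ref{eq:nn-dobrushin-Phi}. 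Taking the infimum over $W\in\mathcal D$ is harmless because the bounds are uniform.

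The only real subtlety is Step 1: the nonlinear network must be controlled uniformly over the whole neighborhood $\mathcal D$, not just at a single parameter. The supremum over $\mathcal D$ is already built into the definition of $\mathcal E_{\mathrm{lin}}^{\mathrm{NN}}$, so the remaining steps are deterministic on that event.
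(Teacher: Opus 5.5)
Your proposal is correct and follows essentially the same route as the paper's proof. Both arguments bound the nonlinear scores uniformly by $B^{\mathrm{NN}}r^{\mathrm{NN}}+\varepsilon^{\mathrm{NN}}(m,\delta_{\mathrm{NN}})$ using the zero-score symmetric initialization, Cauchy--Schwarz with the tangent-feature bound, and the linearization event, and then repeat the pointwise softmax, column-sum, and Dobrushin arguments of Lemma~\ref{lem:Dobrushin-row-col}.
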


\begin{proof}[Proof of Lemma~\ref{lem:nn-Dobrushin-row-col}]
We first prove the claims for the transition kernel. Fix \((s,a,s')\in\bS\times\bA\times\bS\) and \(W_p\in\mathcal D_p\). By the symmetric initialization in Assumption~\ref{as:nn-model-linearization},
\[
F_{p,W_p}^{\mathrm{lin}}(s,a,s')=\big\langle \phi_p^{\mathrm{NN}}(s,a,s'),W_p-W_{p,0}\big\rangle_F .
\]
Hence, by the tangent-feature bound and the definition of \(r_p^{\mathrm{NN}}\),
\[
\big|F_{p,W_p}^{\mathrm{lin}}(s,a,s')\big|
\le \|\phi_p^{\mathrm{NN}}(s,a,s')\|_F\,\|W_p-W_{p,0}\|_F
\le B^{\mathrm{NN}}r_p^{\mathrm{NN}}.
\]
Using the score-linearization error in Definition~\ref{def:nn-linearized-scores}, we obtain the full nonlinear score bound
\[
|F_p(s,a,s';W_p)|
\le \big|F_{p,W_p}^{\mathrm{lin}}(s,a,s')\big|
+\big|F_p(s,a,s';W_p)-F_{p,W_p}^{\mathrm{lin}}(s,a,s')\big|
\le B^{\mathrm{NN}}r_p^{\mathrm{NN}}+\varepsilon_{p}^{\mathrm{NN}}(m,\,\delta_{\mathrm{NN}}).
\]
Thus the neural-softmax transition kernel satisfies the same uniform score bound as the log-linear kernel in Lemma~\ref{lem:Dobrushin-row-col}, with \(B(\|\theta^\star\|_2+r_\Theta)\) replaced by \(B^{\mathrm{NN}}r_p^{\mathrm{NN}}+\varepsilon_{p}^{\mathrm{NN}}(m,\,\delta_{\mathrm{NN}})\). Repeating the pointwise softmax upper-bound argument in Eq.~\ref{eq:kernel-uni-upperbound} gives, for all \((s,a,s')\),
\[
P_{W_p}(s'\mid s,a)
\le
\frac{1}{1+(|\bS|-1)\exp\!\bigl(-2(B^{\mathrm{NN}}r_p^{\mathrm{NN}}+\varepsilon_{p}^{\mathrm{NN}}(m,\,\delta_{\mathrm{NN}}))\bigr)}.
\]
Summing over \(s\in\bS\) and taking the supremum over \((W_p,a,s')\) proves Eq.~\ref{eq:nn-column-sum-upperbound}. Similarly, repeating the lower-bound argument in Eq.~\ref{eq:kernel-uni-lowerbound} gives
\[
P_{W_p}(s'\mid s,a)
\ge
\frac{1}{1+(|\bS|-1)\exp\!\bigl(2(B^{\mathrm{NN}}r_p^{\mathrm{NN}}+\varepsilon_{p}^{\mathrm{NN}}(m,\,\delta_{\mathrm{NN}}))\bigr)}
=:P_{\min}^{\mathrm{NN}}.
\]
Summing \(P_{\min}^{\mathrm{NN}}\) over \(s\in\bS\) and taking the infimum over \((W_p,a,s')\) proves Eq.~\ref{eq:nn-column-sum-lowerbound}. Moreover, since every entry of \(P_{W_p}(\cdot\mid s,a)\) is bounded from below by \(P_{\min}^{\mathrm{NN}}\), the same Dobrushin argument as in Eq.~\ref{eq:dobcoef-p-lowerbound} yields
\[
\sum_{x\in\bS}\min\{P_{W_p}(x\mid s,a),P_{W_p}(x\mid s'',a)\}
\ge
|\bS|P_{\min}^{\mathrm{NN}}
=
\frac{|\bS|}{1+(|\bS|-1)\exp\!\bigl(2(B^{\mathrm{NN}}r_p^{\mathrm{NN}}+\varepsilon_{p}^{\mathrm{NN}}(m,\,\delta_{\mathrm{NN}}))\bigr)}.
\]
Taking the infimum over \(W_p\in\mathcal D_p\), \(a\in\bA\), and \(s,s''\in\bS\) proves Eq.~\ref{eq:nn-dobrushin-row}.

\medskip\noindent
We now prove the observation bounds. Fix \((s,\hat s)\in\bS\times\hat\bS\) and \(W_\Phi\in\mathcal D_\Phi\). By the symmetric initialization,
\[
F_{\Phi,W_\Phi}^{\mathrm{lin}}(s,\hat s)
=
\big\langle \phi_\Phi^{\mathrm{NN}}(s,\hat s),W_\Phi-W_{\Phi,0}\big\rangle_F .
\]
The tangent-feature bound and the definition of \(r_\Phi^{\mathrm{NN}}\) imply
\[
\big|F_{\Phi,W_\Phi}^{\mathrm{lin}}(s,\hat s)\big|
\le \|\phi_\Phi^{\mathrm{NN}}(s,\hat s)\|_F\,\|W_\Phi-W_{\Phi,0}\|_F
\le B^{\mathrm{NN}}r_\Phi^{\mathrm{NN}}.
\]
Together with the score-linearization error, this gives
\[
|F_\Phi(s,\hat s;W_\Phi)|
\le B^{\mathrm{NN}}r_\Phi^{\mathrm{NN}}+\varepsilon_\Phi^{\mathrm{NN}}(m,\,\delta_{\mathrm{NN}}).
\]
Thus the neural-softmax observation kernel satisfies the same uniform score bound as the log-linear observation kernel in Lemma~\ref{lem:Dobrushin-row-col}, with \(B(\|w^\star\|_2+r_\Theta)\) replaced by \(B^{\mathrm{NN}}r_\Phi^{\mathrm{NN}}+\varepsilon_\Phi^{\mathrm{NN}}(m,\,\delta_{\mathrm{NN}})\). Repeating the pointwise upper- and lower-bound arguments in Eq.~\ref{eq:Phi-pointwise-upper}--Eq.~\ref{eq:Phi-pointwise-lower} gives, for all \((s,\hat s)\),
\[
\Phi_{W_\Phi}(\hat s\mid s)
\le
\frac{1}{1+(|\hat\bS|-1)\exp\!\bigl(-2(B^{\mathrm{NN}}r_\Phi^{\mathrm{NN}}+\varepsilon_\Phi^{\mathrm{NN}}(m,\,\delta_{\mathrm{NN}}))\bigr)}
=:\Phi_{\max}^{\mathrm{NN}},
\]
and
\[
\Phi_{W_\Phi}(\hat s\mid s)
\ge
\frac{1}{1+(|\hat\bS|-1)\exp\!\bigl(2(B^{\mathrm{NN}}r_\Phi^{\mathrm{NN}}+\varepsilon_\Phi^{\mathrm{NN}}(m,\,\delta_{\mathrm{NN}}))\bigr)}
=:\Phi_{\min}^{\mathrm{NN}}.
\]
Summing these pointwise bounds over \(s\in\bS\) and taking the supremum and infimum over \((W_\Phi,\hat s)\) proves Eq.~\ref{eq:nn-Phi-column-sum-upperbound} and Eq.~\ref{eq:nn-Phi-column-sum-lowerbound}, respectively. Finally, since every entry of \(\Phi_{W_\Phi}(\cdot\mid s)\) is bounded from below by \(\Phi_{\min}^{\mathrm{NN}}\), the same Dobrushin argument as in Eq.~\ref{eq:dobcoef-Phi-lowerbound} gives
\[
\sum_{\hat s\in\hat\bS}\min\{\Phi_{W_\Phi}(\hat s\mid s),\Phi_{W_\Phi}(\hat s\mid s'')\}
\ge
|\hat\bS|\Phi_{\min}^{\mathrm{NN}}
=
\frac{|\hat\bS|}{1+(|\hat\bS|-1)\exp\!\bigl(2(B^{\mathrm{NN}}r_\Phi^{\mathrm{NN}}+\varepsilon_\Phi^{\mathrm{NN}}(m,\,\delta_{\mathrm{NN}}))\bigr)}.
\]
Taking the infimum over \(W_\Phi\in\mathcal D_\Phi\) and \(s,s''\in\bS\) proves Eq.~\ref{eq:nn-dobrushin-Phi}.
\end{proof}

\section{Proof of Corollary \ref{cor:nn-param-belief-lip}}
\begin{proof}\label{proof:belief-lip-nn}
For \(a\in\bA\), write
\[
(P_{W_p}^{a}q)(s'):=\sum_{s\in\bS}P_{W_p}(s'\mid s,a)q(s),\qquad
(Q_{W_\Phi}q)(\hat s):=\sum_{s'\in\bS}\Phi_{W_\Phi}(\hat s\mid s')q(s').
\]
The belief recursions are
\[
b_{k+1}^{W}=\psi_{\Phi_{W_\Phi}}\big(P_{W_p}^{a_k}b_k^W,\hat s_{k+1}\big),\qquad
b_{k+1}^{W^\star}=\psi_{\Phi_{W_\Phi^\star}}\big(P_{W_p^\star}^{a_k}b_k^{W^\star},\hat s_{k+1}\big),
\qquad b_0^W=b_0^{W^\star}=\nu_0 .
\]
We work under \(\bP_{W^\star}(\cdot\mid a_{0:t-1})\), with the same filtrations \(\mathcal F_k^-\) and \(\mathcal F_k\) as in the proof of Theorem~\ref{thm:param-belief-lip}. Thus
\[
\hat s_{k+1}\mid\mathcal F_k\sim Q_{W_\Phi^\star}\!\left(P_{W_p^\star}^{a_k}b_k^{W^\star}\right).
\]
Define
\begin{align}
\ell_p^{\mathrm{NN}}
&:=\frac{B^{\mathrm{NN}}}{2}\|W_p-W_p^\star\|_F+\varepsilon_{p}^{\mathrm{NN}}(m,\,\delta_{\mathrm{NN}}),\label{eq:ell-p-nn}\\
\ell_\Phi^{\mathrm{NN}}
&:=B^{\mathrm{NN}}\|W_\Phi-W_\Phi^\star\|_F+2\varepsilon_\Phi^{\mathrm{NN}}(m,\,\delta_{\mathrm{NN}}).\label{eq:ell-Phi-nn}
\end{align}
By Lemma~\ref{lem:nn-kernel-lip}, for every \(b\in\Delta_{\bS}\) and \(a\in\bA\),
\begin{align}
\bigl\|P_{W_p}^{a}b-P_{W_p^\star}^{a}b\bigr\|_{\mathrm{TV}}
&\le \ell_p^{\mathrm{NN}},\label{eq:nn-transition-mixture-bound}\\
\sup_{s\in\bS}\bigl\|\Phi_{W_\Phi}(\cdot\mid s)-\Phi_{W_\Phi^\star}(\cdot\mid s)\bigr\|_1
&\le \ell_\Phi^{\mathrm{NN}}.\label{eq:nn-observation-row-l1-bound}
\end{align}

Fix \(k\in\{0,\dots,t-1\}\). As in the proof of Theorem~\ref{thm:param-belief-lip}, add and subtract
\[
\psi_{\Phi_{W_\Phi^\star}}(P_{W_p}^{a_k}b_k^W,\hat s_{k+1})
\quad\text{and}\quad
\psi_{\Phi_{W_\Phi^\star}}(P_{W_p^\star}^{a_k}b_k^W,\hat s_{k+1}),
\]
and denote the resulting three terms by \(\mathrm{(I)},\mathrm{(II)},\mathrm{(III)}\), corresponding respectively to observation-model perturbation, transition-model perturbation, and propagation of the previous belief error. The algebraic decomposition is identical to the decomposition in the proof of Theorem~\ref{thm:param-belief-lip}.

For Term \(\mathrm{(I)}\), the Bayes-operator perturbation argument in Eq.~\ref{eq:bayes-op-param-perturb}--Eq.~\ref{eq:exp-denominator-eliminate}, with \(\Phi_w,\Phi_{w^\star}\) replaced by \(\Phi_{W_\Phi},\Phi_{W_\Phi^\star}\), gives
\[
\bE_{\hat s\sim Q_{W_\Phi^\star}b}
\Big[\|\psi_{\Phi_{W_\Phi}}(b,\hat s)-\psi_{\Phi_{W_\Phi^\star}}(b,\hat s)\|_{\mathrm{TV}}\Big]
\le \ell_\Phi^{\mathrm{NN}}.
\]
The same change-of-measure step as in Eq.~\ref{eq:termI-main-belief}, together with the Dobrushin contraction of \(Q_{W_\Phi^\star}\), the contraction of \(P_{W_p^\star}^{a_k}\), and Eq.~\ref{eq:nn-transition-mixture-bound}, yields
\begin{align}
\bE[\mathrm{(I)}\mid\mathcal F_k]
&\le \ell_\Phi^{\mathrm{NN}}+(1-\kappa_\Phi^{\mathrm{NN}})\ell_p^{\mathrm{NN}}
+(1-\kappa_P^{\mathrm{NN}})(1-\kappa_\Phi^{\mathrm{NN}})
\|b_k^{W^\star}-b_k^W\|_{\mathrm{TV}}. \label{eq:nn-I-final-bound}
\end{align}

For Term \(\mathrm{(II)}\), applying Lemma 3.2 of \citet{Mcdonald2024StochasticCriteria} as in Eq.~\ref{eq:termII-lemma32-part}, and using Eq.~\ref{eq:nn-transition-mixture-bound}, gives
\[
\bE_{\hat s\sim Q_{W_\Phi^\star}(P_{W_p^\star}^{a_k}b_k^W)}
\Big[\|\psi_{\Phi_{W_\Phi^\star}}(P_{W_p}^{a_k}b_k^W,\hat s)-\psi_{\Phi_{W_\Phi^\star}}(P_{W_p^\star}^{a_k}b_k^W,\hat s)\|_{\mathrm{TV}}\Big]
\le (2-\kappa_\Phi^{\mathrm{NN}})\ell_p^{\mathrm{NN}}.
\]
The change-of-measure term is the same as in Eq.~\ref{eq:termII-measure-mismatch-final}, with the neural Dobrushin coefficients:
\[
\Big\|Q_{W_\Phi^\star}(P_{W_p^\star}^{a_k}b_k^{W^\star})-Q_{W_\Phi^\star}(P_{W_p^\star}^{a_k}b_k^W)\Big\|_{\mathrm{TV}}
\le (1-\kappa_\Phi^{\mathrm{NN}})(1-\kappa_P^{\mathrm{NN}})\|b_k^{W^\star}-b_k^W\|_{\mathrm{TV}}.
\]
Therefore,
\begin{align}
\bE[\mathrm{(II)}\mid\mathcal F_k]
&\le (2-\kappa_\Phi^{\mathrm{NN}})\ell_p^{\mathrm{NN}}
+(1-\kappa_\Phi^{\mathrm{NN}})(1-\kappa_P^{\mathrm{NN}})
\|b_k^{W^\star}-b_k^W\|_{\mathrm{TV}}. \label{eq:nn-II-final-bound}
\end{align}

For Term \(\mathrm{(III)}\), Theorems 3.3 and 4.1 of \citet{Mcdonald2024StochasticCriteria}, applied with the neural Dobrushin coefficients from Lemma~\ref{lem:nn-Dobrushin-row-col}, give
\begin{align}
\bE[\mathrm{(III)}\mid\mathcal F_k]
&\le (1-\kappa_P^{\mathrm{NN}})(2-\kappa_\Phi^{\mathrm{NN}})
\|b_k^W-b_k^{W^\star}\|_{\mathrm{TV}}. \label{eq:nn-III-final-bound}
\end{align}
The required dominance condition clearly holds, since the neural-softmax transition and observation kernels are strictly positive on finite spaces.

Combining Eq.~\ref{eq:nn-I-final-bound}--Eq.~\ref{eq:nn-III-final-bound} and taking outer expectation yields
\begin{align}
\bE\Big[\|b_{k+1}^W-b_{k+1}^{W^\star}\|_{\mathrm{TV}}\Big]
&\le \alpha_{\mathrm{NN}}\,
\bE\Big[\|b_k^W-b_k^{W^\star}\|_{\mathrm{TV}}\Big]
+\ell_\Phi^{\mathrm{NN}}+(3-2\kappa_\Phi^{\mathrm{NN}})\ell_p^{\mathrm{NN}}, \label{eq:nn-belief-recursion}
\end{align}
where \(\alpha_{\mathrm{NN}}=(1-\kappa_P^{\mathrm{NN}})(4-3\kappa_\Phi^{\mathrm{NN}})\). Since \(b_0^W=b_0^{W^\star}\), unrolling Eq.~\ref{eq:nn-belief-recursion} gives
\[
\bE\Big[\|b_t^W-b_t^{W^\star}\|_{\mathrm{TV}}\Big]
\le \Big(\ell_\Phi^{\mathrm{NN}}+(3-2\kappa_\Phi^{\mathrm{NN}})\ell_p^{\mathrm{NN}}\Big)
\frac{1-\alpha_{\mathrm{NN}}^t}{1-\alpha_{\mathrm{NN}}}.
\]
Substituting Eq.~\ref{eq:ell-p-nn} and Eq.~\ref{eq:ell-Phi-nn} proves Eq.~\ref{eq:nn-belief-lipschitz}.
\end{proof}

\medskip

\begin{remark}[Finite-width neural-network approximation]
\label{rem:nn-linearization-error-rate}
The quantities \(\varepsilon_p^{\mathrm{NN}}(m, \delta_{\mathrm{NN}})\) and \(\varepsilon_\Phi^{\mathrm{NN}}(m, \delta_{\mathrm{NN}})\) measure the finite-width error incurred by replacing the nonlinear ReLU scores with their first-order NTK linearizations around initialization. In the NTK regime, sufficiently over-parameterized networks remain close to initialization and their outputs are well approximated by this linearized model \citep{JacotNTK2018,ChizatBach2019,JiLiTelgarsky2021}. For two-layer ReLU networks with symmetric random initialization, high-probability local linearization bounds imply that, on a neighborhood of radius \(O(m^{-1/2})\) around initialization, the approximation error decays as
\[
\varepsilon_p^{\mathrm{NN}}(m,\delta_{\mathrm{NN}})=\tilde O(m^{-1/2}),
\qquad \varepsilon_\Phi^{\mathrm{NN}}(m,\,\delta_{\mathrm{NN}})=\tilde O(m^{-1/2}),
\]
Consequently, under the stated local linearization event, the additive neural approximation terms in Corollary~\ref{cor:nn-param-belief-lip} vanish as \(m\to\infty\), recovering the same belief-stability mechanism as in the log-linear model, with the NTK tangent features playing the role of the finite-dimensional feature maps.
\end{remark}

%====================================================================================================================================================================

\section{Proof of Corollary \ref{cor:belief-good-condition-cor}}
\begin{proof}\label{proof:col-belief}
Consider $X_t:=\|b_t^\Theta-b_t^{\Theta^\star}\|_{\mathrm{TV}}\in[0,1]$ and use the filtration
$\{\mathcal F_t\}_{t\ge -1}$ defined by $\mathcal F_{-1}:=\sigma(\varnothing,\Omega)$ and
$\mathcal F_t:=\sigma(\hat s_{1:t},a_{0:t})$ for $t\ge 0$. As $\mathcal F_t^- \subseteq \mathcal F_t$, the random variable $X_t$ is also $\mathcal F_t$-measurable, and in particular, under any non-anticipative policy, $a_t$ is $\mathcal F_t^-$-measurable and
$\mathcal F_t=\sigma(\mathcal F_t^-,a_t)$. From one-step conditional bound established in the proof of Theorem \ref{thm:param-belief-lip}
(the bounds for terms (I)--(III) up to \ref{eq:them-belief-lip-recursion} without taking the outer expectation), for all $t\ge 1$, we proved

\begin{equation}\label{eq:belief-freedman-drift-bound}
\bE[X_t|\cF_{t-1}]\le\alpha X_{t-1} +c_b\,(\delta(\theta),\delta(w)).
\end{equation}
 
Define the martingale difference sequence $D_t:=\; X_t\,-\,\bE\,[X_t\mid\mathcal F_{t-1}]$ and the partial sums $S_n:=\;\sum_{t=0}^{n-1}D_t$ for $n=1, \dots, T$. Note that $D_0=X_0-\bE[X_0|\cF_{-1}]=0$ since $X_0=0$ as $b_0^\Theta=b_0^{\Theta^\star}=\nu_0$. Then $\big\{S_n,\,\cF_{n-1}\big\}_{n=1}^{T}$ is a martingale. Summing \ref{eq:belief-freedman-drift-bound} over $t=1,\dots,T-1$ yields
\begin{align}
\sum_{t=1}^{T-1}\bE \left[X_t\,\middle|\,\mathcal F_{t-1}\right]
&\le \alpha\sum_{t=1}^{T-1}X_{t-1}+(T-1)c_b\,(\delta(\theta),\delta(w))\notag\\
&\le\; \alpha\sum_{t=0}^{T-1}X_t+(T-1)c_b\,(\delta(\theta),\delta(w)).\label{eq:sum-cond-drifts}
\end{align}
where the last inequality follows from adjusting the indices and knowing that $X_t\in[0,1]$. Moreover, observe that
\begin{align*}
\sum_{t=0}^{T-1}X_t&=\sum_{t=0}^{T-1}D_t + \sum_{t=0}^{T-1}\bE[X_t\mid\mathcal F_{t-1}]\\
&=S_{T}+\sum_{t=0}^{T-1}\bE[X_t\mid\mathcal F_{t-1}]
\end{align*}
together with the fact that $X_0=0$ and upper-bounding the last summation by \ref{eq:sum-cond-drifts}
\begin{align}
\label{eq:sumX-drift}
&\sum_{t=0}^{T-1}X_t
\le\; \alpha\sum_{t=0}^{T-1}X_t+(T-1)c_b\,(\delta(\theta),\delta(w)) \;+\; S_{T}\\
&\Rightarrow (1-\alpha)\sum_{t=0}^{T-1}X_t
\le\;(T-1)c_b\,(\delta(\theta),\delta(w)) \;+\; S_{T}
\end{align}
Now, for $n=1,\dots, T$ define \(
V_n \;:=\;\sum_{t=0}^{n-1}\bE\,[D_t^2\mid\mathcal F_{t-1}]
\). Since $X_t\in[0,1]$, we have $\bE[D_t^2\mid\mathcal F_{t-1}] = \text{Var}(X_t\mid\mathcal F_{t-1}) \le \bE[X_t^2\mid\mathcal F_{t-1}]\le \bE[X_t\mid\mathcal F_{t-1}]$, then for $n=T$, using the drift summation bound \ref{eq:sum-cond-drifts} 
\begin{align}
V_{T}
&\le \sum_{t=0}^{T-1}\bE \left[X_t\,\middle|\,\cF_{t-1}\right]\notag\\
&\le \alpha\sum_{t=0}^{T-1}X_t+(T-1)c_b\,(\delta(\theta),\delta(w))\notag\\
&\le \alpha\sum_{t=0}^{T-1}X_t+T c_b\,(\delta(\theta),\delta(w))
\label{eq:V-tight-bound}
\end{align}
Moreover, $X_t\in[0,\,1]$ yields $|D_t|\le1$, then by Freedman's inequality \cite{Freedman1975}, with probability at least $1-\delta_b$,
\begin{equation}\label{eq:freedman-applied}
S_{T}\le \sqrt{2V_{T}\log(1/\delta_b)}+\frac{2}{3}\log(1/\delta_b).
\end{equation}
Substitute \ref{eq:V-tight-bound} into \ref{eq:freedman-applied}, and then into \ref{eq:sumX-drift}:
\begin{align*}
(1-\alpha)\sum_{t=0}^{T-1}X_t
&\le
T\,c_b\,(\delta(\theta),\delta(w)) + \sqrt{2 \log(1/\delta_b) (\alpha \sum_{t=0}^{T-1}X_t + T c_b\,(\delta(\theta),\delta(w)))} + \frac{2}{3}\log(1/\delta_b) \\
&\le T c_b\,(\delta(\theta),\delta(w)) + \sqrt{2\alpha \log(1/\delta_b) \sum_{t=0}^{T-1}X_t} + \sqrt{2 T c_b\,(\delta(\theta),\delta(w)) \log(1/\delta_b)} + \frac{2}{3}\log(1/\delta_b)
\end{align*}
where last inequality follows from $\sqrt{x+y}\le\sqrt{x}+\sqrt{y}$. Also, by applying the arithmetic mean-geometric mean inequality 
\begin{align*}
\sqrt{2\alpha \log(1/\delta_b)\sum_{t=0}^{T-1}X_t}\,&=\,\Bigl(\sqrt{(1-\alpha)\sum_{t=0}^{T-1}X_t}\Bigr).\Bigl(\sqrt{\frac{2\alpha\log(1/\delta_b)}{1-\alpha}}\Bigr)\\
&\le\frac{(1-\alpha)\sum_{t=0}^{T-1}X_t}{2}+\frac{\alpha\log(1/\delta_b)}{1-\alpha}
\end{align*}
and rearranging terms we have
\begin{align*}
&\frac{1-\alpha}{2}\sum_{t=0}^{T-1}X_t \le T c_b\,(\delta(\theta),\delta(w)) + \sqrt{2 T c_b\,(\delta(\theta),\delta(w)) \log(1/\delta_b)} + \log(1/\delta_b)\left(\frac{2}{3} + \frac{\alpha}{1-\alpha}\right)\\
&\Rightarrow \sum_{t=0}^{T-1}X_t\le \frac{2T c_b\,(\delta(\theta),\delta(w))}{1-\alpha}+\frac{2\sqrt{2 T c_b\,(\delta(\theta),\delta(w)) \log(1/\delta_b)}}{1-\alpha}+\log(1/\delta_b)\frac{2\alpha+4}{3(1-\alpha)^2}\\
&\Rightarrow \frac{1}{T}\sum_{t=0}^{T-1}X_t \le \frac{2c_b\,(\delta(\theta),\delta(w))}{1-\alpha} + \frac{2}{1-\alpha}\sqrt{\frac{2 c_b\,(\delta(\theta),\delta(w)) \log(1/\delta_b)}{T}} + \frac{\log(1/\delta_b)}{T(1-\alpha)}\left(\frac{4}{3} + \frac{2\alpha}{1-\alpha}\right).
\end{align*}

The neural-softmax case follows by the same argument. One replaces
\(X_t=\|b_t^\Theta-b_t^{\Theta^\star}\|_{\mathrm{TV}}\) by
\(X_t^{\mathrm{NN}}=\|b_t^W-b_t^{W^\star}\|_{\mathrm{TV}}\), and uses the
one-step recursion from Theorem~\ref{cor:nn-param-belief-lip},
\[
\bE[X_t^{\mathrm{NN}}\mid\mathcal F_{t-1}]\le\alpha_{\mathrm{NN}}X_{t-1}^{\mathrm{NN}}
+c_b^{\mathrm{NN}}(W,W^\star;\delta_{\mathrm{NN}}).\]
Repeating the preceding Freedman argument with
\((\alpha,c_b)\) replaced by
\((\alpha_{\mathrm{NN}},c_b^{\mathrm{NN}})\) gives the identical
high-probability time-average bound for the neural-softmax model.
\end{proof}

%====================================================================================================================================================================

\newpage

\begin{lemma}[Clean negative log-likelihood gradient bound]\label{lem:clean-like-grad-bound}
Consider the \emph{clean} (\ref{subsec:features-mle}) negative log-likelihood gradient
\[
\nabla L(\mu^\star)= \frac{1}{N_{\mathrm{HF}}}\sum_{i=1}^{N_{\mathrm{HF}}}\bigl(\sigma(\phi_i^\top\mu^\star)-y_i\bigr)\phi_i.
\]
where $\sigma$ is the sigmoid function and $\phi_i\in\{\phi_j\}_{j=1}^{N_{\mathrm{HF}}}\subset\bR^d$ is the given trajectory-level exact (clean) accumulated features differences for $i-th$ trajectory, defined in \ref{eq:accu-feature-exact}. Given $\zeta >0$ define the clean empirical covariance
\(
\Sigma \;:=\;\frac{1}{N_{\mathrm{HF}}}\sum_{i=1}^{N_{\mathrm{HF}}}\phi_i\phi_i^\top
\)
and regularized empirical covariance by $\Sigma+\zeta I$. Then, for any $\delta_c\in(0,1)$ with probability at least $1-\delta_c$ it holds
\begin{equation}\label{eq:score_bound_tildeSigma}
\bigl\|\nabla L(\mu^\star)\bigr\|_{(\Sigma+\zeta I)^{-1}}\;\le\;\frac{1}{\sqrt{N_{\mathrm{HF}}}}
\sqrt{d\log \Big(1+\frac{4T^2 B_r^2}{\zeta d}\Big)+2\log \Big(\frac{1}{\delta_c}\Big)}.
\end{equation}
\end{lemma}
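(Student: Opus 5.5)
This bound follows the self-normalized argument of \citet{ZhuPrincipledComparisons}, which we adapt to trajectory-level belief features. Define the noise variables $\eta_i:=y_i-\sigma(\phi_i^\top\mu^\star)$ and the design matrix $X\in\bR^{N_{\mathrm{HF}}\times d}$ whose rows are $\phi_i^\top$. Then $\nabla L(\mu^\star)=-\frac{1}{N_{\mathrm{HF}}}X^\top\eta$. Set $V:=N_{\mathrm{HF}}(\Sigma+\zeta I)=X^\top X+N_{\mathrm{HF}}\zeta I$. This gives
\[
\|\nabla L(\mu^\star)\|_{(\Sigma+\zeta I)^{-1}}^2=\frac{1}{N_{\mathrm{HF}}}\,\eta^\top X V^{-1}X^\top\eta ,
\]
so it suffices to show $\eta^\top XV^{-1}X^\top\eta\le d\log(1+4T^2B_r^2/(\zeta d))+2\log(1/\delta_c)$ with probability at least $1-\delta_c$.

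We work conditionally on the clean features $\{\phi_i\}$. By Assumption~\ref{assum:reward-rlhf}(3) and Remark~\ref{remark:label-cond-indep}, the labels are conditionally independent given $\phi_i$ with $\bE[\eta_i\mid\phi_i]=0$. Since $\eta_i$ takes values in an interval of length one, Hoeffding's lemma shows each $\eta_i$ is $1/2$-sub-Gaussian.

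Two routes are available. The first is to apply the self-normalized martingale bound of \citet{NIPS2011_e1d5be1c} with regularizer $\lambda=N_{\mathrm{HF}}\zeta$. That gives, with probability $1-\delta_c$,
\[
\|X^\top\eta\|_{V^{-1}}^2\le 2\cdot\tfrac14\bigl(\log\det(V)\det(\lambda I)^{-1}+2\log(1/\delta_c)\bigr).
\]
The second, slightly cleaner for fixed design, is to write $M:=XV^{-1}X^\top$, which satisfies $0\preceq M\preceq I$, and use the Hsu--Kakade--Zhang quadratic-form concentration for sub-Gaussian vectors, as in Lemma 3.1 of \citet{ZhuPrincipledComparisons}. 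I would take the first route, since the target statement has exactly its log-determinant form. The numerical constant $1/4\le 1$ only helps.

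For the log-determinant, each clean feature satisfies $\|\phi_i\|_2\le 2TB_r$: $\phi_i$ is a difference of two sums of $T$ belief features, and each belief feature has norm at most $B_r$ by the boundedness remark following Assumption~\ref{assum:reward-rlhf}. Hence $\mathrm{tr}(X^\top X)\le 4T^2B_r^2N_{\mathrm{HF}}$. By AM--GM on the eigenvalues,
\[
\log\frac{\det V}{\det(N_{\mathrm{HF}}\zeta I)}\le d\log\Bigl(1+\frac{4T^2B_r^2N_{\mathrm{HF}}}{dN_{\mathrm{HF}}\zeta}\Bigr)=d\log\Bigl(1+\frac{4T^2B_r^2}{\zeta d}\Bigr).
\]
Substituting this and dividing by $N_{\mathrm{HF}}$ gives \eqref{eq:score_bound_tildeSigma}.

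The main obstacle is justifying the martingale or independence structure. The features $\phi_i$ are built from the oracle beliefs $b^{\Theta^\star}$ and are random through the observation histories. I would handle this by conditioning on the whole collection $\{\phi_i\}$, which is legitimate because the comparisons are independent and $y_i$ depends on the data only through $\phi_i$. Under that conditioning the design is fixed and the $\eta_i$ are independent, zero-mean and sub-Gaussian. Equivalently, one can take the filtration generated by $(\phi_1,y_1,\dots,\phi_{i-1},y_{i-1},\phi_i)$, under which $\eta_i$ is a martingale difference, as the Abbasi-Yadkori bound requires.
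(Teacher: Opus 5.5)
Your proposal is correct and matches the paper's proof essentially step for step. The paper likewise applies Theorem~1 of \citet{NIPS2011_e1d5be1c} with regularizer $N_{\mathrm{HF}}\zeta I$ to $S=\sum_i(\sigma(\phi_i^\top\mu^\star)-y_i)\phi_i$, bounds the log-determinant by the determinant--trace inequality using $\|\phi_i\|_2\le 2TB_r$, and rescales by $1/\sqrt{N_{\mathrm{HF}}}$.

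The differences are cosmetic. The paper uses the cruder sub-Gaussian constant $1$ where you use $1/2$; your displayed prefactor $2\cdot\tfrac14$ is actually a factor of two looser than what the theorem gives, which is harmless. Your explicit filtration $\sigma(\phi_1,y_1,\dots,\phi_{i-1},y_{i-1},\phi_i)$ also justifies the martingale-difference property more carefully than the paper, which conditions only on $\phi_i$.
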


\begin{proof}[Proof of lemma \ref{lem:clean-like-grad-bound}]
Define the shorthand $\xi_i := \sigma(\phi_i^\top\mu^\star)-y_i \in [-1,1],$ $S:=\sum_{i=1}^{N_{\mathrm{HF}}}\xi_i\phi_i$, and $V := N_{\mathrm{HF}}\,\zeta I + \sum_{i=1}^{N_{\mathrm{HF}}}\phi_i\phi_i^\top$. By Assumption \ref{assum:reward-rlhf}, item $3$ (Preference Realizability) and Remark \ref{remark:label-cond-indep}, for $0\le i\le N_{\mathrm{HF}}-1$ we have \(\bE[\xi_i \mid \phi_i]=\sigma(\phi_i^\top \mu^\star) - \bE[y_i \mid \phi_i]\notag=\sigma(\phi_i^\top \mu^\star) - \sigma(\phi_i^\top \mu^\star)\notag=
0.\) Also, since we have $\xi_i\in[-1,1]$, by Hoeffding's lemma $\xi_i$ is conditionally $1$-sub-Gaussian given $\phi_i$, i.e.,
\[
\forall t\in\bR,\qquad \bE\big[\exp(t\xi_i)\,|\,\phi_i\big]\le\exp(\frac{\,t^2}{\,2})
\]

Then, by Theorem 1 of \citet{NIPS2011_e1d5be1c} (elliptical potential), for any $\delta_c\in(0,1)$, with probability at
least $1-\delta_c$
\begin{align}
\label{eq:self-norm}
\|S\|_{V^{-1}} &\le\; \sqrt{2\log \Bigg(\frac{\det(V)^{1/2}}{\det(N_{\mathrm{HF}}\zeta I)^{1/2}}\cdot\frac{1}{\delta_c}\Bigg)}\notag\\
&= \sqrt{\log\det\Bigg(\frac{1}{N_{\mathrm{HF}}\zeta }I \cdot \left( N_{\mathrm{HF}}\zeta I + \sum_{i=1}^{N_{\mathrm{HF}}}\phi_i\phi_i^\top \right)\Bigg)+2\log(\frac{1}{\delta_c})}\notag\\
&= \sqrt{\log\det\left( I + \frac{1}{N_{\mathrm{HF}}\zeta }\sum_{i=1}^{N_{\mathrm{HF}}}\phi_i\phi_i^\top \right)+2\log(\frac{1}{\delta_c})}\notag\\
&\le \sqrt{d\log \Big(1+\frac{(2T B_r)^2}{\zeta d}\Big)+2\log \Big(\frac{1}{\delta_c}\Big)}
\end{align}

Where the last inequality follows from Lemma 10 in \citet{NIPS2011_e1d5be1c} (determinant-trace inequality). Finally, note that $\nabla L(\mu^\star)=\frac{1}{N_{\mathrm{HF}}}S$ and \(
\Sigma+\zeta I
=\frac{1}{N_{\mathrm{HF}}}\sum_{i=1}^{N_{\mathrm{HF}}}\phi_i\phi_i^\top + \zeta I=\frac{1}{N_{\mathrm{HF}}}V
\), we have
\begin{align}
\bigl\|\nabla L(\mu^\star)\bigr\|_{(\Sigma+\zeta I)^{-1}}
&=\Bigl\|\frac{1}{N_{\mathrm{HF}}}S\Bigr\|_{(\frac{1}{N_{\mathrm{HF}}}V)^{-1}}=\sqrt{\Big(\frac{1}{N_{\mathrm{HF}}}S\Big)^\top \big(N_{\mathrm{HF}}V^{-1}\big)\Big(\frac{1}{N_{\mathrm{HF}}}S\Big)}\notag\\
&=\frac{1}{\sqrt{N_{\mathrm{HF}}}}\,\|S\|_{V^{-1}}.\label{eq:scaling_fix}
\end{align} This result combined with \ref{eq:self-norm} completes the proof.

\end{proof}

\begin{lemma}[Trajectory-level feature perturbation bound]\label{lem:Delta-Bound}
Fix $i\in\{1,\dots,N_{\mathrm{HF}}\}$ and define the clean and perturbed trajectory-level feature differences
$\phi_i$ and $\tilde\phi_i$ as in \ref{eq:accu-feature-exact}--\ref{eq:accu-feature-perturbed}. Let \(
\Delta_{\phi,i}\;:=\;\tilde\phi_i-\phi_i.
\) Suppose the event
$\mathcal E_{b}^{\mathrm{RLHF}}(\delta_b)$ in \ref{eq:Eb-RLHF-uniform} holds. Then, it holds
\begin{align}
\|\Delta_{\phi,i}\|_2
&\le\;4T B_r\,\epsilon_b(\delta_b, 2N_{\mathrm{HF}}) 
\label{eq:Delta-upper-bound-lemma}
\end{align}
where $\epsilon_b(\delta_b, 2N_{\mathrm{HF}})$ is defined in \ref{eq:eps-b-bar-def}.
\end{lemma}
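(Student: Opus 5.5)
The plan is to expand $\Delta_{\phi,i}$ directly from Eq.~\ref{eq:accu-feature-exact}--Eq.~\ref{eq:accu-feature-perturbed}. Then we bound it term by term with the belief-feature Lipschitz estimate Eq.~\ref{eq:belief-feature-lip}. Finally we invoke the belief-accuracy event $\mathcal E_b^{\mathrm{RLHF}}(\delta_b)$ for each of the two trajectories in the $i$-th comparison.

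First, since both features are built on the same action sequences $a_{i,h}^{(j)}$, subtracting gives
\[
\Delta_{\phi,i}=\sum_{h=0}^{T-1}\Bigl[\phi_b\bigl(b_{i,h}^{(1),\Theta},a_{i,h}^{(1)}\bigr)-\phi_b\bigl(b_{i,h}^{(1),\Theta^\star},a_{i,h}^{(1)}\bigr)\Bigr]-\sum_{h=0}^{T-1}\Bigl[\phi_b\bigl(b_{i,h}^{(2),\Theta},a_{i,h}^{(2)}\bigr)-\phi_b\bigl(b_{i,h}^{(2),\Theta^\star},a_{i,h}^{(2)}\bigr)\Bigr].
\]
The triangle inequality, together with Eq.~\ref{eq:belief-feature-lip} applied at each fixed action, then yields
\[
\|\Delta_{\phi,i}\|_2\le 2B_r\sum_{j\in\{1,2\}}\sum_{h=0}^{T-1}\bigl\|b_{i,h}^{(j),\Theta}-b_{i,h}^{(j),\Theta^\star}\bigr\|_{\mathrm{TV}}.
\]

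Next, on $\mathcal E_b^{\mathrm{RLHF}}(\delta_b)$ each event $\mathcal E_{b,i}^{(j)}(\delta_b/(2N_{\mathrm{HF}}))$ holds. By the definition in Eq.~\ref{eq:Eb-RLHF-uniform} and the shorthand Eq.~\ref{eq:eps-b-bar-def}, this gives
\[
\sum_{h=0}^{T-1}\bigl\|b_{i,h}^{(j),\Theta}-b_{i,h}^{(j),\Theta^\star}\bigr\|_{\mathrm{TV}}\le T\,\epsilon_b(\delta_b,2N_{\mathrm{HF}})\quad\text{for } j=1,2.
\]
Summing the two contributions gives $\|\Delta_{\phi,i}\|_2\le 2B_r\cdot 2T\epsilon_b(\delta_b,2N_{\mathrm{HF}})=4TB_r\,\epsilon_b(\delta_b,2N_{\mathrm{HF}})$, which is the claim.

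There is no real obstacle. The only point needing care is bookkeeping: the event is indexed with confidence $\delta_b/(2N_{\mathrm{HF}})$, so the per-trajectory time-average bound is exactly $\epsilon_b(\delta_b,2N_{\mathrm{HF}})$. The factor $2$ from converting $\ell_1$ to TV and the factor $2$ from the two trajectories combine into the constant $4$. The horizon indexing also matches: sums run over $h=0,\dots,T-1$, as in Corollary~\ref{cor:belief-good-condition-cor}.
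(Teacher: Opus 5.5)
Your proposal is correct and follows the paper's proof essentially step for step. Like the paper, you split $\Delta_{\phi,i}$ into the two per-trajectory sums and apply the triangle inequality together with Eq.~\ref{eq:belief-feature-lip}, which gives $2B_r$ times the summed TV errors. You then invoke the event $\mathcal E_b^{\mathrm{RLHF}}(\delta_b)$ at confidence level $\delta_b/(2N_{\mathrm{HF}})$ for $j=1,2$, which yields the constant $4TB_r\,\epsilon_b(\delta_b,2N_{\mathrm{HF}})$.
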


\begin{proof}[Proof of lemma \ref{lem:Delta-Bound}]\label{proof:lem.1}
By the triangle inequality and \ref{eq:belief-feature-lip} we have
\begin{align}
\|\Delta_{\phi,i}\|_2 &= \left\| \sum_{h=0}^{T-1} \bigl[\phi_b(b_{i,h}^{(1),\Theta}, a_{i,h}^{(1)}) - \phi_b(b_{i,h}^{(1),\Theta^\star},a_{i,h}^{(1)})\bigr] 
- \sum_{h=0}^{T-1} \bigl[\phi_b(b_{i,h}^{(2),\Theta}, a_{i,h}^{(2)}) - \phi_b(b_{i,h}^{(2),\Theta^\star},a_{i,h}^{(2)})\bigr] \right\|_2 \notag\\
&\le \sum_{h=0}^{T-1} \bigl\|\phi_b(b_{i,h}^{(1),\Theta}, a_{i,h}^{(1)}) - \phi_b(b_{i,h}^{(1),\Theta^\star},a_{i,h}^{(1)})\bigr\|_2 
+ \sum_{h=0}^{T-1} \bigl\|\phi_b(b_{i,h}^{(2),\Theta}, a_{i,h}^{(2)}) - \phi_b(b_{i,h}^{(2),\Theta^\star},a_{i,h}^{(2)})\bigr\|_2 \notag\\
&\le \sum_{h=0}^{T-1} B_r\|b_{i,h}^{(1),\Theta}-b_{i,h}^{(1),\Theta^\star}\|_1
+\sum_{h=0}^{T-1} B_r\|b_{i,h}^{(2),\Theta}-b_{i,h}^{(2),\Theta^\star}\|_1 \notag\\
&= 2B_r\sum_{h=0}^{T-1}\|b_{i,h}^{(1),\Theta}-b_{i,h}^{(1),\Theta^\star}\|_{\mathrm{TV}}
+2B_r\sum_{h=0}^{T-1}\|b_{i,h}^{(2),\Theta}-b_{i,h}^{(2),\Theta^\star}\|_{\mathrm{TV}}.\label{eq:sum-action-diff}
\end{align}
On the event $\mathcal E_{b}^{\mathrm{RLHF}}(\delta_b)$, for each $j\in\{1,2\}$,
\[
\frac{1}{T}\sum_{h=0}^{T-1}\|b_{i,h}^{(j),\Theta}-b_{i,h}^{(j),\Theta^\star}\|_{\mathrm{TV}} \le \epsilon_b(\delta_b, 2N_{\mathrm{HF}}),
\]
hence \(\sum_{h=0}^{T-1}\|b_{i,h}^{(j),\Theta}-b_{i,h}^{(j),\Theta^\star}\|_{\mathrm{TV}}
\le T\epsilon_b(\delta_b, 2N_{\mathrm{HF}})\).
Substituting into \ref{eq:sum-action-diff} yields
\[
\|\Delta_{\phi,i}\|_2\le 2B_r\cdot T\epsilon_b(\delta_b, 2N_{\mathrm{HF}})+2B_r\cdot T\epsilon_b(\delta_b, 2N_{\mathrm{HF}}) =4T B_r\,\epsilon_b(\delta_b, 2N_{\mathrm{HF}}),
\]
which proves Eq.~\ref{eq:Delta-upper-bound-lemma}.

\end{proof}

\section{Proof of Theorem \ref{thm:main-mu}}
\begin{proof}\label{proof:thm-rlhf}
 Assume the true reward model parameter $\mu^\star$ exists and consider the set $\bB_2(r_\mu)$ (Assumption \ref{assum:reward-rlhf}). Define the maximum likelihood estimation objective for perturbed features (\ref{eq:accu-feature-perturbed})
\begin{align}
\tilde L(\mu) &:=-\frac{1}{N_{\mathrm{HF}}}\sum_{i=1}^{N_{\mathrm{HF}}} \log\left(\frac{\mathbf{1}\{y_i = 1\}}{1 + \exp\big(-(\,\,\tilde{\phi}^{\tau^{(1)}_i,\tau^{(2)}_i})^\top\mu\big)}+\frac{\mathbf{1}\{y_i = 0\}}{1 +\exp\big((\,\,\tilde{\phi}^{\tau^{(1)}_i, \tau^{(2)}_i})^\top\mu\big)} \right)\notag\\
& =\; -\frac{1}{N_{\mathrm{HF}}}\sum_{i=1}^{{N_{\mathrm{HF}}}} \Bigl[y_i\ln\sigma(\langle \tilde\phi_i,\mu\rangle)+(1-y_i)\ln\bigl(1-\sigma(\langle \tilde\phi_i,\mu\rangle)\bigr)\Bigr]\notag\\
&=\; -\frac{1}{N_{\mathrm{HF}}}\sum_{i=1}^{{N_{\mathrm{HF}}}} \Bigl[y_i\ln\sigma(\langle \phi_i+\Delta_{\phi,i},\mu\rangle)+(1-y_i)\ln\bigl(1-\sigma(\langle \phi_i+\Delta_{\phi,i},\mu\rangle)\bigr)\Bigr] \label{eq:likelihood-perturbed}
\end{align}
where $\sigma(\cdot)$ is the sigmoid function. The gradient and Hessian of $\tilde L(\mu)$ are

\begin{align}
\nabla \tilde L(\mu)&=\frac{1}{N_{\mathrm{HF}}}\sum_{i=1}^{N_{\mathrm{HF}}}
\bigl(\sigma(\tilde\phi_i^\top\mu)-y_i\bigr)\tilde\phi_i\notag\\
&=\frac{1}{N_{\mathrm{HF}}}\sum_{i=1}^{N_{\mathrm{HF}}}
\bigl(\sigma((\phi_i+ \Delta_{\phi,i})^\top\mu)-y_i\bigr)(\phi_i+ \Delta_{\phi,i})\label{mle-grad}
\end{align}

\begin{equation}\label{eq:mle-hessian}
\nabla^2 \tilde L(\mu)
=\frac{1}{N_{\mathrm{HF}}}\sum_{i=1}^{N_{\mathrm{HF}}}\sigma(\tilde\phi_i^\top\mu)\bigl(1-\sigma(\tilde\phi_i^\top\mu)\bigr)\,
\tilde\phi_i\tilde\phi_i^\top.
\end{equation}

The function $\sigma(z)(1-\sigma(z))$
is even and strictly decreasing on $[0,\infty)$. For any $\mu\in\bB_2(r_\mu)$, we have \(
|\tilde\phi_i^\top\mu| \le \|\tilde\phi_i\|_2\|\mu\|_2 \le 2T B_r r_\mu,
\) then for all $i$, 
$$\sigma(\tilde\phi^\top_i\mu)(1-\sigma(\tilde\phi^\top_i\mu))\ge\frac{1}{2+\exp(2T B_rr_\mu)+\exp(-2T B_rr_\mu)}=\rho $$
 substituting into \ref{eq:mle-hessian} yields
\begin{align}
\nabla^2 \tilde L(\mu) &=\frac{1}{N_{\mathrm{HF}}}\sum_{i=1}^{N_{\mathrm{HF}}}\sigma(\tilde\phi_i^\top\mu)\bigl(1-\sigma(\tilde\phi_i^\top\mu)\bigr)\,
\tilde\phi_i\tilde\phi_i^\top\\
&\succeq\rho\bigl( \frac{1}{N_{\mathrm{HF}}}\sum_{i=1}^{N_{\mathrm{HF}}}\tilde\phi_i\tilde\phi_i^\top\bigr)
=\rho\tilde\Sigma
\end{align} 
where $\tilde\Sigma$ is the feature covariance matrix defined in \ref{eq:sigma-cov-hf}. Now, for any $\mu_1$ and $\mu_2$ in $\bB_2(r_\mu)$ and line segment $\mu_\lambda=\lambda\mu_1+(1-\lambda)\mu_2$ with $\lambda\in[0,1]$, we apply the second order Taylor expansion

\begin{align}
&\tilde L(\mu_1)-\tilde L(\mu_2)-\langle\nabla \tilde L(\mu_2),\mu_1-\mu_2\rangle \notag\\
&\qquad=\int_0^1(1-\lambda)\,(\mu_1-\mu_2)^\top\nabla^2 \tilde L(\mu_\lambda)\,(\mu_1-\mu_2)\,d\lambda \notag\\
&\qquad\ge \frac{\rho}{2}\,(\mu_1-\mu_2)^\top(\tilde\Sigma)(\mu_1-\mu_2). \label{eq:taylor-sc}
\end{align}
Therefore,
\begin{align}
\tilde L(\mu_1)-\tilde L(\mu_2)-\langle\nabla \tilde L(\mu_2),\mu_1-\mu_2\rangle
&\ge \frac{\rho}{2} \|\mu_1-\mu_2\|_{\tilde\Sigma}^2. 
\end{align}
This proves the $\rho-$strong convexity of $\tilde L(\mu)$ with $\mu\in\bB_2(r_\mu)$ with respect to the semi-norm $\|\cdot\|_{\tilde\Sigma}$. Next, writing \ref{eq:taylor-sc} for $\mu^\star$ and $\tilde\mu $ yields
\begin{align}
\frac{\rho}{2}\,\|\tilde\mu-\mu^\star\|_{\tilde\Sigma}^2
&\le\; \tilde L(\tilde\mu)-\tilde L(\mu^\star)-\langle\nabla\tilde L(\mu^\star),\tilde\mu-\mu^\star\rangle\notag\\
&\overset{(a)}{\le} -\langle\nabla\tilde L(\mu^\star),\tilde\mu-\mu^\star\rangle\notag\\
&\overset{(b)}{\le} \|\nabla\tilde L(\mu^\star)\|_{(\tilde\Sigma+\zeta I)^{-1}}\,
\|\tilde\mu-\mu^\star\|_{(\tilde\Sigma+\zeta I)}\label{eq:sc-mu-reg-norm}
\end{align}
where $(a)$ follows from the fact that $\tilde L(\tilde\mu)-\tilde L(\mu^\star)\le 0$ as $\tilde\mu\in\argmin_{\mu\in\bB_2(r_\mu)}\tilde L(\mu)$, and $(b)$ follows from applying Cauchy--Schwarz in the dual pair of norms induced by $\tilde\Sigma+\zeta I$. Using the identuty $\|\tilde\mu-\mu^\star\|_{\tilde\Sigma}^2
=\|\tilde\mu-\mu^\star\|_{(\tilde\Sigma+\zeta I)}^2
-\zeta \,\|\tilde\mu-\mu^\star\|_2^2$, we have
\begin{align}
\frac{\rho}{2}\,\|\tilde\mu-\mu^\star\|_{(\tilde\Sigma+\zeta I)}^2
&\le \|\nabla\tilde L(\mu^\star)\|_{(\tilde\Sigma+\zeta I)^{-1}}\, \|\tilde\mu-\mu^\star\|_{(\tilde\Sigma+\zeta I)}
+2\rho\,\zeta \,r_\mu^2.
\end{align}
Solving this quadratic inequality and using $\sqrt{a+b}\le \sqrt{a}+\sqrt{b}$ yields
\begin{align}
\|\tilde\mu-\mu^\star\|_{(\tilde\Sigma+\zeta I)} &\le \; \frac{2}{\rho}\, \|\nabla\tilde L(\mu^\star)\|_{(\tilde\Sigma+\zeta I)^{-1}} \;+\;2r_\mu\sqrt{\zeta }.
\label{eq:mu-opt-bound-grad-final}
\end{align}
Now we proceed with bounding $\|\nabla\tilde L(\mu^\star)\|_{(\tilde\Sigma+\zeta I)^{-1}}$ with reference to \ref{mle-grad}. we define per-sample gradients at $\mu^\star$ as \(g_i^\star
:= \bigl(\sigma(\phi_i^\top\mu^\star)-y_i\bigr)\phi_i\) and \(\tilde g_i^\star
:= \bigl(\sigma(\tilde\phi_i^\top\mu^\star)-y_i\bigr)\tilde\phi_i\), with respect to exact and perturbed features. Hence, $\nabla\tilde L(\mu^\star)=\frac{1}{N_{\mathrm{HF}}}\sum_i\tilde g_i(\mu^\star)$, and we can write
\begin{align}\label{eq:likelihood-grad-decomp}
\nabla\tilde L(\mu^\star)
=\nabla L(\mu^\star)+\frac{1}{N_{\mathrm{HF}}}\sum_{i=1}^{N_{\mathrm{HF}}}\bigl(\tilde g_i^\star-g_i^\star\bigr).
\end{align}
Consider the following decomposition
\begin{align*}
\tilde g_i^\star-g_i^\star
&=\underbrace{\Bigl(\sigma(\phi_i^\top\mu^\star+\Delta_{\phi,i}^\top\mu^\star)-\sigma(\phi_i^\top\mu^\star)\Bigr)\phi_i}_{(I)}\;+\;\underbrace{\Bigl(\sigma(\phi_i^\top\mu^\star+\Delta_{\phi,i}^\top\mu^\star)-y_i\Bigr)\Delta_{\phi,i}}_{(II)}.
\end{align*}

Since \(\sigma(\cdot)\) is \(\tfrac14\)-Lipschitz, \(\sigma(\cdot)\in(0,1)\) and \(y_i\in\{0,1\}\), we have
\[
\|(I)\|_2\le\frac14\,|\Delta_{\phi,i}^\top\mu^\star|\,\|\phi_i\|_2\le\frac{2T B_r}{4}\|\Delta_{\phi,i}\|_2\|\mu^\star\|_2\le \frac{T B_r}{2}r_\mu\|\Delta_{\phi,i}\|_2
\]
\[\|(II)\|_2\le\|\Delta_{\phi,i}\|_2.
\]
where we used  \(\mu^\star\in\bB_2(r_\mu)\), \(|\Delta_{\phi,i}^\top\mu^\star|\le \|\Delta_{\phi,i}\|_2\|\mu^\star\|_2\le r_\mu\|\Delta_{\phi,i}\|_2\),
together with \(\|\phi_i\|_2\le 2T B_r\). Thus, for $i=1,\dots,N_{\mathrm{HF}}$, we obtain
\begin{equation}\label{eq:gi-perturb-bound}
\|\tilde g_i^\star-g_i^\star\|_2 \le \Bigl(1+\tfrac12\,T B_r\,r_\mu\Bigr)\,\|\Delta_{\phi,i}\|_2,
\end{equation}
which yields
\begin{align}
\bigl\|\nabla\tilde L(\mu^\star)-\nabla L(\mu^\star)\bigr\|_2
&=\Bigl\|\frac{1}{N_{\mathrm{HF}}}\sum_{i=1}^{N_{\mathrm{HF}}}(\tilde g_i^\star-g_i^\star)\Bigr\|_2 \;\le\; \frac{1}{N_{\mathrm{HF}}}\sum_{i=1}^{N_{\mathrm{HF}}}\|\tilde g_i^\star-g_i^\star\|_2 \notag\\
&\le \Bigl(1+\tfrac12\,T B_r\,r_\mu\Bigr)\cdot \frac{1}{N_{\mathrm{HF}}}\sum_{i=1}^{N_{\mathrm{HF}}}\|\Delta_{\phi,i}\|_2. \label{eq:score-shift-euclid}
\end{align}

Since \(\tilde\Sigma+\zeta I\succeq \zeta I\), the inequality

\begin{align}
\bigl\|\nabla\tilde L(\mu^\star)-\nabla L(\mu^\star)\bigr\|_{(\tilde\Sigma+\zeta I)^{-1}}
&\le \frac{1}{\sqrt{\zeta }}\bigl\|\nabla\tilde L(\mu^\star)-\nabla L(\mu^\star)\bigr\|_{2} \notag\\
&\le \frac{1}{\sqrt{\zeta }}\,
\Bigl(1+\tfrac12\,T B_r\,r_\mu\Bigr)\cdot
\frac{1}{N_{\mathrm{HF}}}\sum_{i=1}^{N_{\mathrm{HF}}}\|\Delta_{\phi,i}\|_2
\end{align}
on the event $\mathcal{E}_{b}^{\mathrm{RLHF}}(\delta_b)$ Eq.~\ref{eq:Eb-RLHF-uniform} where \(\|\Delta_{\phi,i}\|_2\le 4T B_r\,\epsilon_b(\delta_b, 2N_{\mathrm{HF}})\) for all \(i\) with the probability at least $1-\delta_b$, implies
\begin{align}\label{eq:grad-like-tilde-bound-half}
\bigl\|\nabla\tilde L(\mu^\star) \bigr\|_{(\tilde\Sigma+\zeta I)^{-1}}
&\le \bigl\|\nabla L(\mu^\star)\bigr\|_{(\tilde\Sigma+\zeta I)^{-1}}
+\frac{1}{\sqrt{\zeta }}\,
\Bigl(1+\tfrac12\,T B_r\,r_\mu\Bigr)\cdot
\frac{1}{N_{\mathrm{HF}}}\sum_{i=1}^{N_{\mathrm{HF}}}\|\Delta_{\phi,i}\|_2\notag\\
&\le \bigl\|\nabla L(\mu^\star)\bigr\|_{(\tilde\Sigma+\zeta I)^{-1}} + \frac{4T B_r\,\epsilon_b(\delta_b, 2N_{\mathrm{HF}})}{\sqrt{\zeta }}\,
\Bigl(1+\tfrac12\,T B_r\,r_\mu\Bigr).
\end{align}
By lemma \ref{lem:clean-like-grad-bound}, for any $\delta_c\in(0,1)$ with probability at least $1-\delta_c$ we have
 \begin{equation}\label{eq:grad-like-bound-half-way}
\bigl\|\nabla L(\mu^\star) \bigr\|_{(\Sigma+\zeta I)^{-1}}\le\frac{1}{\sqrt{N_{\mathrm{HF}}}}
\sqrt{d\log \Big(1+\frac{4T^2 B_r^2}{\zeta d}\Big)+2\log \Big(\frac{1}{\delta_c}\Big)}.
\end{equation}
Now, we study the covariance perturbation. Define
\[
E:= \tilde\Sigma-\Sigma=\frac{1}{N_{\mathrm{HF}}}\sum_{i=1}^{N_{\mathrm{HF}}}
\Big(\phi_i\Delta_{\phi,i}^\top+\Delta_{\phi,i}\phi_i^\top+\Delta_{\phi,i}\Delta_{\phi,i}^\top\Big)
\]
which is a symmetric. On $\mathcal{E}_{b}^{\mathrm{RLHF}}(\delta_b)$, we have $\|\Delta_{\phi,i}\|_2\le 4T B_r\,\epsilon_b(\delta_b, 2N_{\mathrm{HF}})$, for all $i$. 
Moreover, we have $\|\phi_i\|_2\le 2T B_r $,
hence, using $\|uv^\top\|_{op}\le \|u\|_2\|v\|_2$ and the triangle inequality,
\begin{align}
\|E\|_{op}
&\le \frac{1}{N_{\mathrm{HF}}}\sum_{i=1}^{N_{\mathrm{HF}}}
\Big(\|\phi_i\Delta_{\phi,i}^\top\|_{op}+\|\Delta_{\phi,i}\phi_i^\top\|_{op}
+\|\Delta_{\phi,i}\Delta_{\phi,i}^\top\|_{op}\Big)\notag\\
&\le \frac{1}{N_{\mathrm{HF}}}\sum_{i=1}^{N_{\mathrm{HF}}} \Big(2\|\phi_i\|_2\|\Delta_{\phi,i}\|_2+\|\Delta_{\phi,i}\|_2^2\Big)\notag\\
&\le 16T^2 B_r^2\epsilon_b(\delta_b, 2N_{\mathrm{HF}}) + (4T B_r\,\epsilon_b(\delta_b, 2N_{\mathrm{HF}}))^2\\
&=16T^2B_r^2\epsilon_b(\delta_b, 2N_{\mathrm{HF}})(1+\epsilon_b(\delta_b, 2N_{\mathrm{HF}}))\notag\\
&:=\epsilon_\Sigma
\label{eq:Eop_bd_remainder}
\end{align}

Therefore, a sufficient level of regularization on $\mathcal{E}_{b}^{\mathrm{RLHF}}(\delta_b)$ is $\epsilon_\Sigma$. Then, for any $1<c_{\zeta}$, choosing $\zeta=c_{\zeta}\epsilon_\Sigma$ on $\mathcal{E}_{b}^{\mathrm{RLHF}}(\delta_b)$ yields
\begin{equation}\label{eq:cov-pert-upper}
\frac{\|E\|_{op}}{\zeta}\le\frac1{c_{\zeta}}<1
\qquad\Longrightarrow\qquad \frac{1}{\sqrt{1-\|E\|_{op}/\zeta}}
\le \sqrt{\frac{c_{\zeta}}{c_{\zeta}-1}} .
\end{equation}

Since $E$ is symmetric, $-\|E\|_{op}I\preceq E\preceq \|E\|_{op}I$, and hence

\begin{align} \label{eq:inverse_comparison_repl_noeta}
&\Sigma+\zeta I-\|E\|_{op}I \;\preceq\; \tilde\Sigma+\zeta I \;\preceq\; \Sigma+\zeta I+\|E\|_{op}I\notag\\
&\Longrightarrow \Sigma+\zeta I-\|E\|_{op}I
\succeq
\Bigl(1-\frac{\|E\|_{op}}{\zeta}\Bigr)\,(\Sigma+\zeta I)\notag\\
&\Longrightarrow (\tilde\Sigma+\zeta I)^{-1}
\;\preceq\;
\frac{1}{1-\|E\|_{op}/\zeta}\,(\Sigma+\zeta I)^{-1}
\end{align}
as $\|E\|_{op}/\zeta<1$. This implies that for any $v\in\bR^d$,
\begin{equation}\label{eq:norm_transfer_noeta}
\|v\|_{(\tilde\Sigma+\zeta I)^{-1}}
\;\le\;
\frac{1}{\sqrt{1-\|E\|_{op}/\zeta}}\;
\|v\|_{(\Sigma+\zeta I)^{-1}}\le \sqrt{\frac{c_\zeta}{c_\zeta-1}}\|v\|_{(\Sigma+\zeta I)^{-1}}.
\end{equation}
\noindent Finally, returning to Eq.~\ref{eq:grad-like-tilde-bound-half} and combining
Eq.~\ref{eq:norm_transfer_noeta} with Lemma~\ref{lem:clean-like-grad-bound} (Eq.~Eq.~\ref{eq:grad-like-bound-half-way}), on the event
$\mathcal E_{b}^{\mathrm{RLHF}}(\delta_b)$ we obtain
\begin{align*}
\bigl\|\nabla\tilde L(\mu^\star) \bigr\|_{(\tilde\Sigma+\zeta I)^{-1}}
&\le \bigl\|\nabla L(\mu^\star)\bigr\|_{(\tilde\Sigma+\zeta I)^{-1}}
+ \frac{4T B_r\,\epsilon_b(\delta_b, 2N_{\mathrm{HF}})}{\sqrt{\zeta }}\,
\Bigl(1+\tfrac12\,T B_r\,r_\mu\Bigr)\\
&\le \frac{1}{\sqrt{1-\|E\|_{op}/\zeta}}\;
\bigl\|\nabla L(\mu^\star)\bigr\|_{(\Sigma+\zeta I)^{-1}}
+ \frac{4T B_r\,\epsilon_b(\delta_b, 2N_{\mathrm{HF}})}{\sqrt{\zeta }}\,
\Bigl(1+\tfrac12\,T B_r\,r_\mu\Bigr)\\
&\le \sqrt{\frac{c_{\zeta}}{c_{\zeta}-1}}\;
\bigl\|\nabla L(\mu^\star)\bigr\|_{(\Sigma+\zeta I)^{-1}}
+ \frac{4T B_r\,\epsilon_b(\delta_b, 2N_{\mathrm{HF}})}{\sqrt{\zeta }}\,
\Bigl(1+\tfrac12\,T B_r\,r_\mu\Bigr)\\
&\le \sqrt{\frac{c_{\zeta}}{N_{\mathrm{HF}}(c_{\zeta}-1)}}\;
\sqrt{d\log \Bigl(1+\frac{4T^2 B_r^2}{\zeta d}\Bigr)+2\log \Bigl(\frac{1}{\delta_c}\Bigr)}
+ \frac{4T B_r\,\epsilon_b(\delta_b, 2N_{\mathrm{HF}})}{\sqrt{\zeta }}\,
\Bigl(1+\tfrac12\,T B_r\,r_\mu\Bigr).
\end{align*}

 Substituting the above bound into Eq.~\ref{eq:mu-opt-bound-grad-final}, on the event
$\mathcal E_{b}^{\mathrm{RLHF}}(\delta_b)$ and the event of Lemma \ref{lem:clean-like-grad-bound}
(with probability at least $1-\delta_c$), we obtain
\begin{align}
\|\tilde\mu-\mu^\star\|_{(\tilde\Sigma+\zeta I)}
&\le \frac{2}{\rho}\,\bigl\|\nabla\tilde L(\mu^\star)\bigr\|_{(\tilde\Sigma+\zeta I)^{-1}}
\;+\;2r_\mu\sqrt{\zeta }\notag\\[3pt]
&\le \frac{2\sqrt{c_\zeta}}{\rho\sqrt{N_{\mathrm{HF}}(c_\zeta-1)}}\;
\sqrt{d\log \Big(1+\frac{4T^2 B_r^2}{\zeta d}\Big)+2\log \Big(\frac{1}{\delta_c}\Big)} \notag\\
&\qquad\qquad + \frac{8T B_r\,\epsilon_b(\delta_b, 2N_{\mathrm{HF}})}{\rho\sqrt{\zeta }}\,
\Bigl(1+\tfrac12\,T B_r\,r_\mu\Bigr)
\;+\;2r_\mu\sqrt{\zeta }.
\label{eq:final-mu-bound}
\end{align}
Therefore, taking a union bound over $\mathcal E_{b}^{\mathrm{RLHF}}(\delta_b)$ and the event of
Lemma~\ref{lem:clean-like-grad-bound}, the claim of Theorem~\ref{thm:main-mu} holds with
probability at least $1-\delta_b-\delta_c$.

\medskip
The proof for the neural-softmax model is identical. By the modular structure of the pipeline, the only POMDP model-specific input used above is the
belief-accuracy event $\mathcal E_b^{\mathrm{RLHF}}(\delta_b)$, through the bound
\(\|\Delta_{\phi,i}\|_2\le 4T B_r\,\epsilon_b(\delta_b,2N_{\mathrm{HF}})\).
Under Theorem~\ref{cor:nn-param-belief-lip} and its corresponding high-probability event established in the second part of Corollary \ref{cor:belief-good-condition-cor}, similar event holds with
\(b_{i,h}^{(j),\Theta},b_{i,h}^{(j),\Theta^\star}\) replaced by
\(b_{i,h}^{(j),W},b_{i,h}^{(j),W^\star}\), and with
\(\epsilon_b(\delta_b,2N_{\mathrm{HF}})\) replaced by
\(\epsilon_b^{\mathrm{NN}}(\delta_b,2N_{\mathrm{HF}},\delta_{\mathrm{NN}})\).
All subsequent steps, e.g., the feature perturbation bound, covariance perturbation bound, gradient decomposition, and strong-convexity argument are downstream and therefore remain unchanged after this replacement.
\end{proof}

\begin{remark}\label{remark:why-not-exact-cov}
A common approach to bound $\|\nabla \tilde L(\mu^\star)\|_{(\tilde\Sigma+\zeta I)^{-1}}$ would be to apply an elliptical potential inequality directly to the summands \(
\tilde g_i^\star
=\bigl(\sigma(\tilde\phi_i^\top\mu^\star)-y_i\bigr)\tilde\phi_i, i=1,\dots,N_{\mathrm{HF}}.
\) as in \ref{lem:clean-like-grad-bound}, but this is not valid here because the required conditional mean-zero property fails as
$\bE[y_i\mid \phi_i]=\sigma(\phi_i^\top\mu^\star)$, hence in general \(
\bE\!\left[\sigma(\tilde\phi_i^\top\mu^\star)-y_i \,\middle|\, \tilde\phi_i\right]\neq 0.
\)
Therefore, $\{\tilde g_i^\star\}$ is not a martingale-difference sequence w.r.t.\ $\tilde\phi_i$, and we instead perform the decompose in \ref{eq:grad-like-tilde-bound-half}
and apply the self-normalized bound only to the clean gradient $\nabla L(\mu^\star)$, for which
$\bE[\sigma(\phi_i^\top\mu^\star)-y_i\mid \phi_i]=0$ holds.
\end{remark}

\medskip

\begin{remark}\label{rem:norm-change-justification}
The inequality in Eq.~\ref{eq:mu-opt-bound-grad-final} is a standard device that makes the analysis well-posed and compatible with self-normalized concentration arguments. 
Indeed, the strong convexity step yields control only in the semi-norm induced by $\tilde\Sigma$,
\[
\tilde L(\mu_1)-\tilde L(\mu_2)-\langle \nabla \tilde L(\mu_2),\mu_1-\mu_2\rangle \;\ge\; \frac{\rho}{2}\,\|\mu_1-\mu_2\|_{\tilde\Sigma}^2,
\]
where $\tilde\Sigma$ may be singular (see also \citet{Agarwal2020,Du2024Exploration-DrivenUtilization,ZhuPrincipledComparisons,NIPS2011_e1d5be1c}). 
Introducing the term $\zeta I$ ensures invertibility and allows us to apply Cauchy--Schwarz in the dual pair
$\|\cdot\|_{\tilde\Sigma+\zeta I}$ and $\|\cdot\|_{(\tilde\Sigma+\zeta I)^{-1}}$. 
This modification contributes an additive regularization bias of order $\sqrt{\zeta}\|\tilde\mu-\mu^\star\|_2\le 2r_\mu\sqrt{\zeta}$ (e.g., Theorem 5.2 in \citet{ZhuPrincipledComparisons}).
\end{remark}

\begin{remark}[Making $\zeta$ fully explicit]\label{rem:zeta-explicit}
Although we write
\[
\zeta \;:=\; c_{\zeta}\,16T^2B_r^2\,\epsilon_b(\delta_b,2N_{\mathrm{HF}})
\Bigl(1+\epsilon_b(\delta_b,2N_{\mathrm{HF}})\Bigr),
\]
the quantity $\epsilon_b(\delta_b,2N_{\mathrm{HF}})$ is \emph{explicit} in our analysis: it is given in
Eq.~\ref{eq:eps-b-bar-def} (see also Corollary~\ref{cor:belief-good-condition-cor}).
In particular, $\epsilon_b$ depends only on $(T,N_{\mathrm{HF}},\delta_b)$ and the problem parameters
$\alpha=(1-\kappa_P)(4-3\kappa_\Phi)$ and $c_b(\delta(\theta),\delta(w))$.
To obtain a deterministic bound that does not depend on the unknown perturbations,
$\delta(\theta),\delta(w)$ (or on $\kappa_P,\kappa_\Phi$), one may replace them by worst-case bounds
over the feasible set, e.g. $\delta(\theta),\delta(w)\le r_\Theta$ and
lower bounds on $\kappa_P,\kappa_\Phi$ from Lemma~\ref{lem:Dobrushin-row-col},
thereby obtaining a closed-form upper bound $\bar\epsilon_b$ and the explicit choice
\[
\zeta \;:=\; c_{\zeta}\,16T^2B_r^2\,\bar\epsilon_b\,(1+\bar\epsilon_b).
\]
This substitution only affects constants, while preserving the stated guarantees. Moreover, when \(\epsilon_b=0\), the same argument may be run with any \(\zeta>0\).
\end{remark}

\begin{remark}[Bias scaling under the prescribed choice of $\zeta$]\label{rem:zeta-choice}
The bound in Theorem~\ref{thm:main-mu} depends on $\zeta$ in all three terms; the statistical term depends on $\zeta$ only through the logarithmic factor, while the explicit belief-mismatch and regularization bias terms have the form
\(C_1\epsilon_b/\sqrt{\zeta}\) and \(C_2\sqrt{\zeta}\). Under the prescribed tuning
\(\zeta = \bar c\,\epsilon_b(1+\epsilon_b)\) (Theorem~\ref{thm:main-mu} and Remark~\ref{rem:zeta-explicit}),
the two explicit bias contributions satisfy
\[
C_1\frac{\epsilon_b}{\sqrt{\zeta}}+ C_2 \sqrt{\zeta}=\frac{C_1}{\sqrt{\bar c}}\sqrt{\frac{\epsilon_b}{1+\epsilon_b}}+C_2 \sqrt{\bar c}\sqrt{\epsilon_b(1+\epsilon_b)}=\mathcal O(\sqrt{\epsilon_b}),
\]
where we used that \(\epsilon_b\in(0,1]\) implies \(1+\epsilon_b\in[1,2]\). Thus, as the time-averaged belief error \(\epsilon_b\) decreases, the explicit belief-mismatch/regularization bias floor does not blow up under this tuning and decays at rate \(\sqrt{\epsilon_b}\), up to the logarithmic \(\zeta\)-dependence in the statistical term.
\end{remark}

\begin{corollary}[Value perturbation from reward parameter error]\label{cor:value-perturb}
Working under the setting of Theorem~\ref{thm:main-mu}, fix the (possibly randomized) policy
\(\pi:\Delta_{\bS}\to\Delta_{\bA}\) used in data generation and any \(\gamma\in(0,1)\).
For \(\mu\in\bR^d\), define the discounted value
\[
V_\mu^\pi(b)\;:=\;\bE^\pi\!\left[\sum_{t=0}^\infty \gamma^t\,r_\mu(b_t,a_t)\ \bigg|\ b_0=b\right].
\]
Then with probability at least \(1-(\delta_b+\delta_c)\),
\[
\sup_{b\in\Delta_{\bS}} \big|V_{\tilde\mu}^\pi(b)-V_{\mu^\star}^\pi(b)\big|
\;\le\;
\frac{B_r}{(1-\gamma)\sqrt{\zeta}}\ \epsilon(\delta_b,\delta_c,\zeta),
\]
where \(\epsilon(\delta_b,\delta_c,\zeta)\) is as in~Eq.~\ref{eq:main_mu_bound}.
\end{corollary}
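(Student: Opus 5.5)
The plan is to reduce the value gap to a single inner product with $\tilde\mu-\mu^\star$, pass from the covariance-weighted norm of Theorem~\ref{thm:main-mu} to the Euclidean norm, and then sum a geometric series over time.

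\textbf{Linearity of the value in $\mu$.} With $r_\mu(b,a)=\phi_b(b,a)^\top\mu$, the law of the belief--action process $(b_t,a_t)_{t\ge0}$ under the fixed policy $\pi$ and the true model does not depend on $\mu$. Hence
\[
V_\mu^\pi(b)=\Psi^\pi(b)^\top\mu,\qquad \Psi^\pi(b):=\bE^\pi\Big[\sum_{t\ge0}\gamma^t\phi_b(b_t,a_t)\,\Big|\,b_0=b\Big].
\]
Interchanging expectation and series is justified by dominated convergence, since $\|\phi_b\|_2\le B_r$ by the remark after Assumption~\ref{assum:reward-rlhf}. The same bound gives $\|\Psi^\pi(b)\|_2\le B_r/(1-\gamma)$ uniformly in $b\in\Delta_{\bS}$. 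Cauchy--Schwarz then yields
\[
\sup_b|V_{\tilde\mu}^\pi(b)-V_{\mu^\star}^\pi(b)|\le \frac{B_r}{1-\gamma}\,\|\tilde\mu-\mu^\star\|_2 .
\]

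\textbf{Passing to the covariance-weighted norm.} Since $\tilde\Sigma\succeq0$, we have $\tilde\Sigma+\zeta I\succeq\zeta I$. Therefore $\|v\|_2\le \zeta^{-1/2}\|v\|_{\tilde\Sigma+\zeta I}$ for every $v\in\bR^d$. Applying this to $v=\tilde\mu-\mu^\star$ and invoking Theorem~\ref{thm:main-mu} on its event, which has probability at least $1-(\delta_b+\delta_c)$, gives
\[
\|\tilde\mu-\mu^\star\|_2\le \zeta^{-1/2}\,\epsilon(\delta_b,\delta_c,\zeta).
\]
Combining this with the value bound above gives the claim on the same event. No further union bound is needed, because the value bound is deterministic given $\tilde\mu$.

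\textbf{Main obstacle.} The delicate step is justifying that $V^\pi_\mu$ is linear in $\mu$ with a $\mu$-independent trajectory law. This requires that the belief dynamics and the policy $\pi$ be fixed and not depend on the reward parameter, which holds since $\pi$ is the fixed data-generating policy.

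There is also a subtlety about which beliefs the value is defined over. I would state explicitly that the bound holds for whichever belief recursion, true or learned, is used to define $V^\pi_\mu$. The argument uses only $b_t\in\Delta_{\bS}$ and the uniform bound on $\phi_b$, so it is insensitive to this choice.
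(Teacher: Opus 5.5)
Your proposal is correct and follows essentially the same route as the paper: a uniform per-step bound $|r_{\tilde\mu}(b,a)-r_{\mu^\star}(b,a)|\le B_r\|\tilde\mu-\mu^\star\|_2$ summed over the geometric series, then $\|v\|_2\le\zeta^{-1/2}\|v\|_{\tilde\Sigma+\zeta I}$ combined with the event of Theorem~\ref{thm:main-mu}. Writing $V_\mu^\pi(b)=\Psi^\pi(b)^\top\mu$ simply repackages the paper's linearity-of-expectation step, and your remark that the trajectory law must not depend on $\mu$ makes explicit an assumption the paper uses without stating it.
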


\begin{proof}[Proof of Corollary \ref{cor:value-perturb}]\label{proof:corollary-value}
Fix any \((b,a)\in\Delta_{\bS}\times\bA\). By boundedness of the belief feature map,
\(\|\phi_b(b,a)\|_2\le B_r\), hence for all \(\mu,\mu'\in\bR^d\),
\begin{align*}
|r_\mu(b,a)-r_{\mu'}(b,a)|
&=\big|\phi_b(b,a)^\top(\mu-\mu')\big|\\
&\le B_r\,\|\mu-\mu'\|_2.
\end{align*}

Therefore, by linearity of expectation and \(\sum_{t\ge0}\gamma^t=(1-\gamma)^{-1}\),
\[
\big|V_\mu^\pi(b)-V_{\mu'}^\pi(b)\big|
\le \frac{B_r}{1-\gamma}\,\|\mu-\mu'\|_2.
\]
Taking \(\mu=\tilde\mu\), \(\mu'=\mu^\star\), and using \(\tilde\Sigma+\zeta I\succeq \zeta I\) (so
\(\|\nu\|_2\le \zeta^{-1/2}\|\nu\|_{\tilde\Sigma+\zeta I}\)),
we obtain
\[
\big|V_{\tilde\mu}^\pi(b)-V_{\mu^\star}^\pi(b)\big|
\le \frac{B_r}{(1-\gamma)\sqrt{\zeta}}\ \|\tilde\mu-\mu^\star\|_{\tilde\Sigma+\zeta I}.
\]
On the event of Theorem~\ref{thm:main-mu}, \(\|\tilde\mu-\mu^\star\|_{\tilde\Sigma+\zeta I}\le \epsilon(\delta_b,\delta_c,\zeta)\).
Taking the supremum over \(b\) completes the proof.

\end{proof}

%%%%%%%%%%%%%%%%%%%%%%%%%%%%%%%%%%
%%%%%%%%%%%%%%%%%%%%%%%%%%%%%%%%%%
%%%%%%%%%%%%%%%%%%%%%%%%%%%%%%%%%%
%%%%%%%%%%%%%%%%%%%%%%%%%%%%%%%%%%
%%%%%%%%%%%%%%%%%%%%%%%%%%%%%%%%%%
%%%%%%%%%%%%%%%%%%%%%%%%%%%%%%%%%%
%%%%%%%%%%%%%%%%%%%%%%%%%%%%%%%%%%

\section{Experiments}
 \subsection{Belief-stability bound}
\begin{figure}[h]
    \centering
    \begin{subfigure}[t]{0.78\textwidth}
        \centering
        \includegraphics[width=\linewidth]{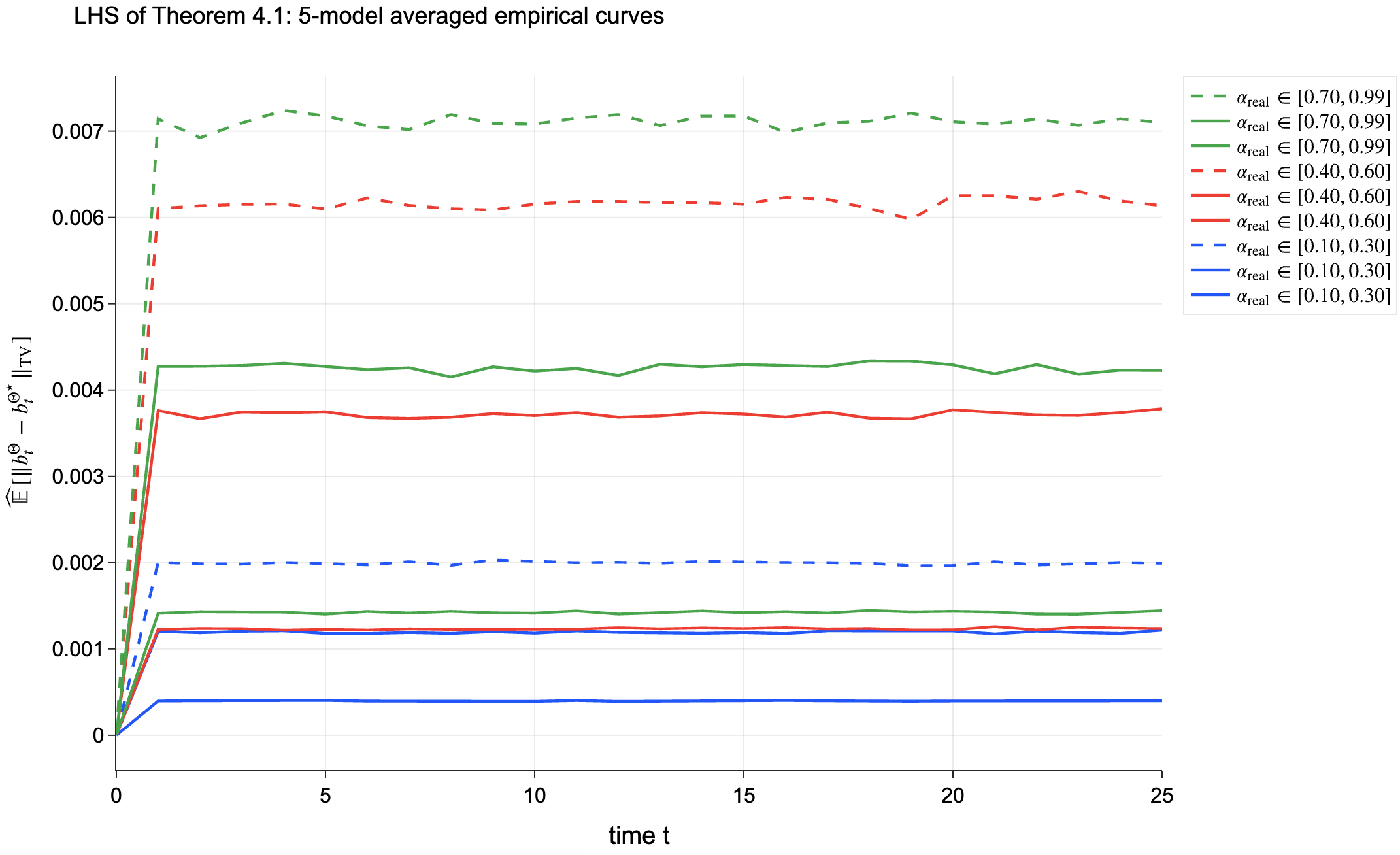}
        \caption{Empirical estimates of the LHS,
        \(\widehat{\bE}[\|b_t^\Theta-b_t^{\Theta^\star}\|_{\mathrm{TV}}]\),
        over time for different realized stability bands and perturbation levels.}
        \label{fig:belief-lhs-curves}
    \end{subfigure}
    
    \vspace{0.5em}
    
    \begin{subfigure}[t]{0.45\textwidth}
        \centering
        \includegraphics[width=\linewidth]{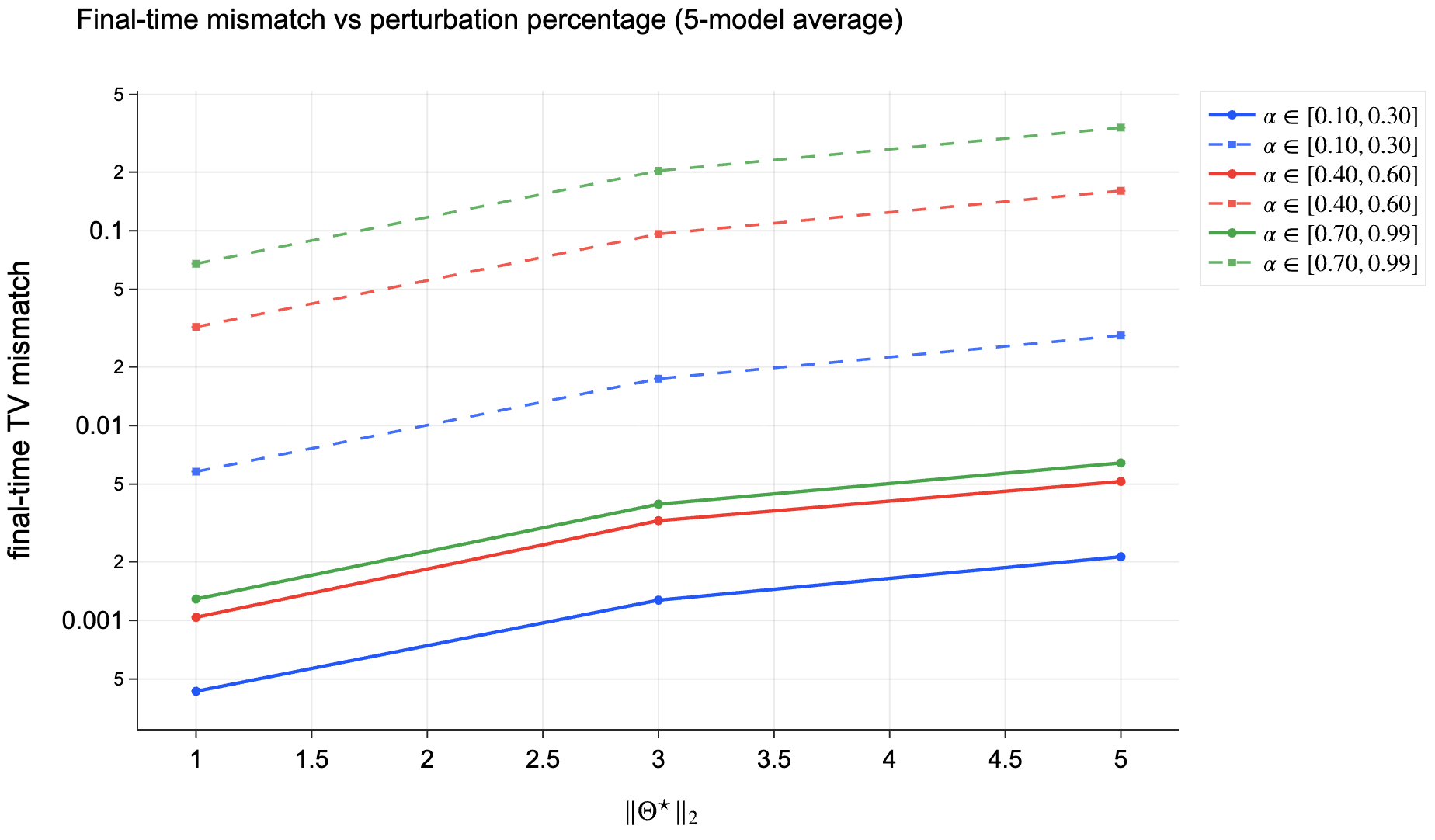}
        \caption{Final-time empirical mismatch and realized theorem RHS as a function of the perturbation magnitude, grouped by realized \(\alpha\)-bands.}
        \label{fig:belief-final-perturbation}
    \end{subfigure}\hfill
    \begin{subfigure}[t]{0.45\textwidth}
        \centering
        \includegraphics[width=0.80\linewidth]{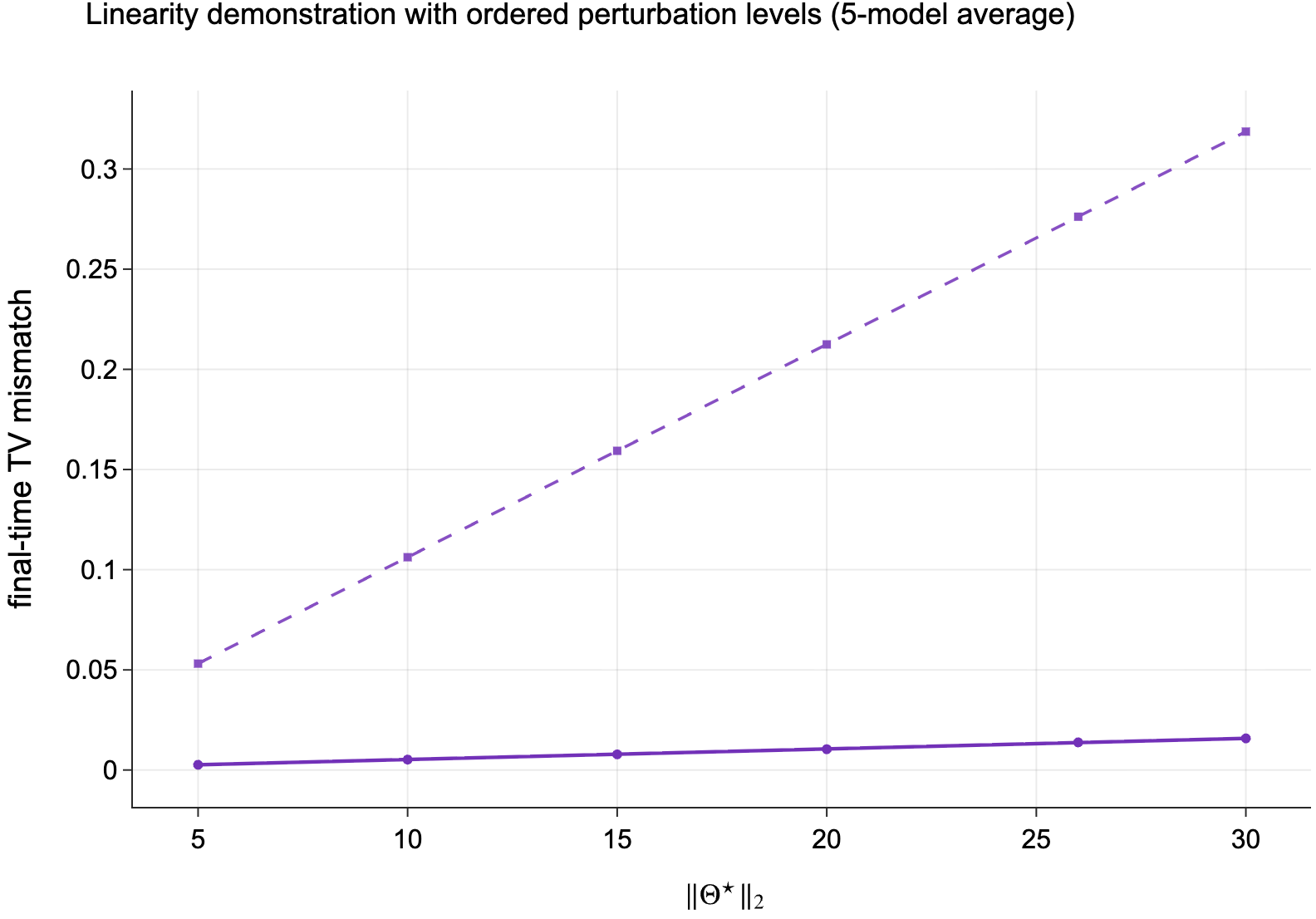}
        \caption{Fixed-\(\alpha\) linearity check: both the final-time empirical mismatch and the realized theorem RHS scale approximately linearly with the perturbation magnitude.}
        \label{fig:belief-linearity}
    \end{subfigure}
    
    \caption{Synthetic validation of the belief-stability mechanism in Theorem~\ref{thm:param-belief-lip}. The experiment isolates the filtering component by comparing Bayesian filters computed under the true log-linear POMDP model and a perturbed model on common histories generated from the true model. The plots are intended to test the qualitative predictions of the theorem: bounded-in-time propagation under \(\alpha<1\), larger mismatch for weaker stability, and approximately linear dependence on the parameter perturbation size.}
    \label{fig:belief-stability-synthetic}
\end{figure}

We first test the belief-stability component of Theorem~\ref{thm:param-belief-lip}. The purpose of this experiment is to verify the qualitative dependences predicted by the theorem once a controlled model perturbation is introduced. In particular, Theorem~\ref{thm:param-belief-lip} predicts that, for histories generated from the true model,
\[
\bE\!\left[\|b_t^\Theta-b_t^{\Theta^\star}\|_{\mathrm{TV}}\right]
\le \frac{1-\alpha^t}{1-\alpha}\,
c_b(\delta(\theta),\delta(w)), \qquad
c_b(\delta(\theta),\delta(w))=B\left(\delta(w)+\frac{3-2\kappa_\Phi}{2}\delta(\theta)\right).
\]
Thus, for \(\alpha<1\), the time-propagation factor saturates at order \((1-\alpha)^{-1}\), while the one-step perturbation term is linear in the parameter deviations. The experiment is designed to separately demonstrates these two effects.

\paragraph{Setting.}
We generate finite log-linear POMDPs satisfying the bounded-feature condition
\[
\|\phi_p(s,a,s')\|_2\le B,\qquad\|\phi_\Phi(s,\hat s)\|_2\le B,
\]
with \(|\bS|=20\), \(|\bA|=10\), \(|\hat\bS|=20\), \(d_\theta=d_w=10\), horizon \(T=25\), and \(B=0.1\). True parameters \(\Theta^\star=(\theta^\star,w^\star)\) are sampled and retained according to the realized stability coefficient
\[
\alpha_{\mathrm{real}}:=(1-\kappa_P)(4-3\kappa_\Phi),
\]
so that the accepted models fall into prescribed stability bands. We then form a perturbed model \(\Theta=(\theta,w)\) by moving \(\Theta^\star\) in a fixed perturbation direction with prescribed magnitude. For each accepted model, observation histories are generated from the true POMDP, and the two Bayesian filters \(b_t^{\Theta^\star}\) and \(b_t^\Theta\) are run on the same action--observation histories. This common-history comparison matches the distributional object in Theorem~\ref{thm:param-belief-lip} and removes policy-induced trajectory drift from the experiment. To stress the least stable case within the fixed-action-sequence setting of the theorem, the action sequence is fixed to the worst realized action for the sampled model.

The plotted empirical quantity is the Monte Carlo estimate
\[
\widehat{\bE}\!\left[\|b_t^\Theta-b_t^{\Theta^\star}\|_{\mathrm{TV}}\right],
\]
where the expectation is over observation histories generated under \(\Theta^\star\). In the reported plots, curves are averaged over \(5\) independently accepted models in the corresponding stability regime and over \(50\) simulated observation histories per model. The displayed RHS is obtained by evaluating the theorem expression with the realized \(\kappa_P,\kappa_\Phi\) and the corresponding perturbation term \(c_b(\delta(\theta),\delta(w))\). Since this uses realized stability constants and finite Monte Carlo estimates, the figure should be interpreted as a synthetic stability diagnostic rather than a claim of tightness of the worst-case theorem constants.

\paragraph{Results.}
Figure~\ref{fig:belief-lhs-curves} shows that the empirical belief mismatch rises quickly from zero and then remains stable over the horizon. This behavior is consistent with the structure of Theorem~\ref{thm:param-belief-lip}: the two filters start from the same prior, model mismatch injects fresh error after filtering begins, and the Dobrushin contraction prevents this error from accumulating linearly in time. The ordering across stability regimes is also consistent with the theorem. Models with larger realized \(\alpha_{\mathrm{real}}\) exhibit larger belief mismatch, while more stable models have substantially smaller plateaus. Within each stability band, increasing the perturbation magnitude increases the empirical mismatch.

Figure~\ref{fig:belief-final-perturbation} compares the final-time empirical mismatch with the realized theorem RHS. The empirical curves remain below the corresponding proof-level upper bounds and inherit the same monotone structure: larger parameter perturbations lead to larger final-time belief error, and weaker stability, i.e., larger \(\alpha_{\mathrm{real}}\), shifts both the empirical mismatch and the bound upward. The gap between the empirical values and the RHS is expected, because the theorem uses total-variation Lipschitz bounds, Dobrushin worst-case contractions, and a uniform propagation argument; these constants are designed for robustness rather than numerical tightness.

Finally, Figure~\ref{fig:belief-linearity} isolates the perturbation dependence by using a separate fixed-stability experiment with \(\alpha_{\mathrm{real}}\in[0.25,0.40]\). In this setting the propagation factor \((1-\alpha^t)/(1-\alpha)\) is approximately fixed across models, so the theorem predicts that the dominant dependence on the perturbation size should enter through the linear term \(c_b(\delta(\theta),\delta(w))\). The observed final-time empirical mismatch and the realized RHS both grow approximately linearly with the perturbation magnitude, supporting the Lipschitz interpretation of Theorem~\ref{thm:param-belief-lip}. Overall, the experiment confirms the qualitative mechanism used later in the reward-learning analysis: under a stable filter, model mismatch induces a controlled belief-error level, but this error increases with both the learned-model perturbation and the weakness of the realized stability coefficient.

\subsection{Reward-learning}
\label{subsec:exp-reward-learning}

We next isolate the downstream reward-estimation component of Theorem~\ref{thm:main-mu}. Unlike the belief-stability experiment above, this experiment does not estimate a POMDP model and does not run Bayesian filtering. Instead, it directly generates clean belief-based trajectory features and perturbed belief-based trajectory features whose discrepancy is controlled by a prescribed belief-accuracy level \(\epsilon_b\). The goal is therefore to test the second part of the theory: once belief approximation induces trajectory-feature perturbations, how does this perturbation affect Bradley--Terry reward estimation?

The theorem controls the perturbed-feature estimator in the adaptive covariance geometry. In particular, ignoring logarithmic factors, Theorem~\ref{thm:main-mu} has the qualitative form
\[
\|\tilde\mu-\mu^\star\|_{\tilde\Sigma+\zeta I}
\;\lesssim\;
\underbrace{N_{\mathrm{HF}}^{-1/2}}_{\text{statistical error}}
+
\underbrace{\frac{T B_r}{\sqrt{\zeta}}\epsilon_b}_{\text{belief-induced bias}}
+
\underbrace{r_\mu\sqrt{\zeta}}_{\text{regularization bias}} .
\]
Thus, for a fixed belief-error level \(\epsilon_b\) and fixed regularization \(\zeta\), increasing \(N_{\mathrm{HF}}\) should reduce the statistical component, while the belief-induced and regularization terms determine the residual scale that cannot be removed merely by collecting more preference comparisons.

\paragraph{Protocol.}
We use a finite synthetic belief-MDP with \(|\bS|=20\), \(|\bA|=5\), reward-feature dimension \(d=5\), horizon \(T=10\), feature bound \(B_r=0.1\), and parameter constraint radius \(r_\mu=1\). The true reward parameter satisfies \(\|\mu^\star\|_2=0.8r_\mu\). For each
\[
N_{\mathrm{HF}}\in\{10,100,500,1000,5000,10000,20000,50000\}
\quad\text{and}\quad
\epsilon_b\in\{0.05,0.10,0.15,0.20,0.30\},
\]
we generate \(N_{\mathrm{HF}}\) independent pairwise comparisons. Clean belief-based trajectory features are denoted by \(\phi_i\), while perturbed features are denoted by \(\tilde\phi_i\). The perturbations are constructed so that each trajectory satisfies the prescribed average total-variation belief-error level \(\epsilon_b\), matching the type of belief-accuracy event used in the proof of Theorem~\ref{thm:main-mu}.

Preference labels are sampled from the clean Bradley--Terry model
\[
\bP(y_i=1\mid \phi_i)=\sigma(\phi_i^\top\mu^\star),
\]
whereas the estimator is fitted using the perturbed features \(\tilde\phi_i\):
\[
\tilde\mu
\in
\arg\min_{\|\mu\|_2\le r_\mu}
-\sum_{i=1}^{N_{\mathrm{HF}}}
\Bigl[
y_i\log\sigma(\tilde\phi_i^\top\mu)
+
(1-y_i)\log\bigl(1-\sigma(\tilde\phi_i^\top\mu)\bigr)
\Bigr].
\]
This creates precisely the downstream misspecification studied by the theorem: the preferences are generated according to the oracle belief features, but the learner observes only perturbed belief-induced features.

For each \(\epsilon_b\), we set the adaptive-norm regularization level according to the deterministic theorem scale
\[
\zeta
=
1.05\cdot 16T^2B_r^2\epsilon_b(1+\epsilon_b),
\]
up to a negligible numerical floor. Since \(\epsilon_b\) is fixed along each curve, \(\zeta\) is also fixed along that curve. Hence the experiment studies the fixed-belief-error regime in which only the statistical term changes with \(N_{\mathrm{HF}}\), while the perturbation and regularization contributions remain present.

\begin{figure}[h]
    \centering
    \begin{subfigure}[t]{0.48\textwidth}
        \centering
        \includegraphics[width=\linewidth]{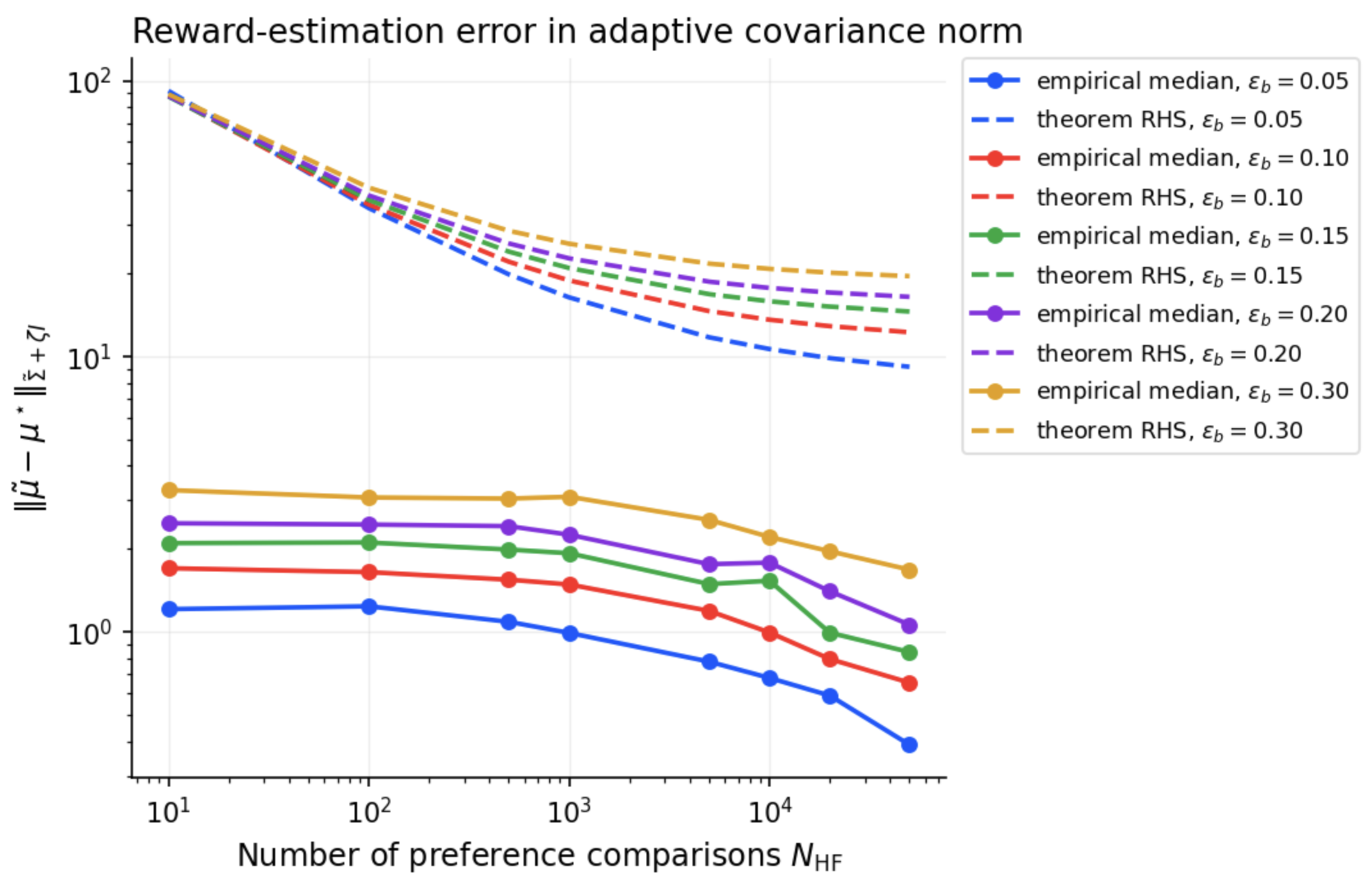}
        \caption{The theorem-relevant error
        \(\|\tilde\mu-\mu^\star\|_{\tilde\Sigma+\zeta I}\)
        and the corresponding proof-level RHS.}
        \label{fig:reward-adaptive-norm}
    \end{subfigure}\hfill
    \begin{subfigure}[t]{0.48\textwidth}
        \centering
        \includegraphics[width=\linewidth]{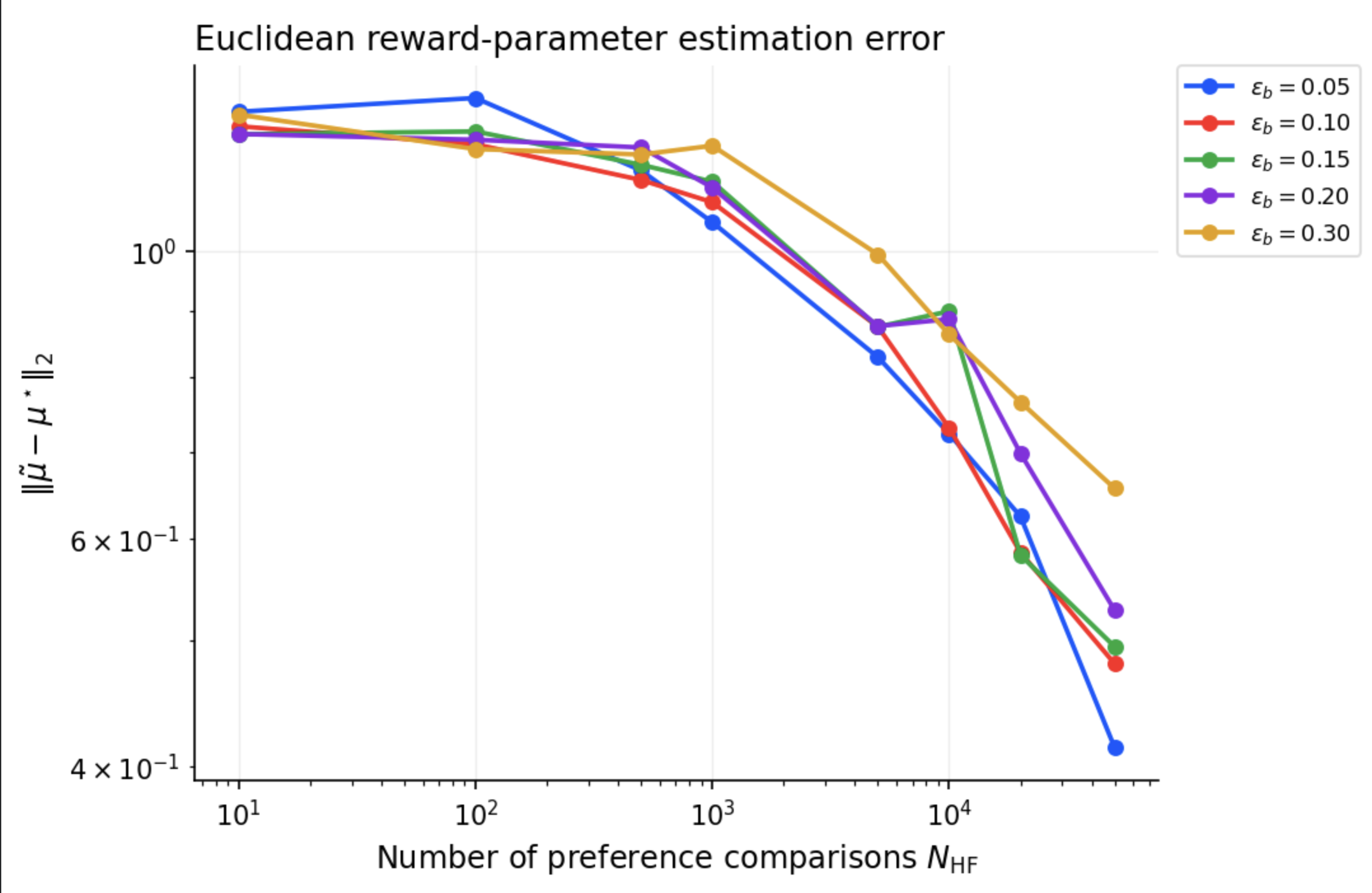}
        \caption{Euclidean parameter error
        \(\|\tilde\mu-\mu^\star\|_2\), reported only as an auxiliary diagnostic.}
        \label{fig:reward-euclidean}
    \end{subfigure}
    \caption{Synthetic downstream reward-learning experiment. Labels are generated from clean belief-based features \(\phi_i\), while the Bradley--Terry estimator is fitted using perturbed features \(\tilde\phi_i\) satisfying a prescribed belief-error level \(\epsilon_b\). The left panel reports the adaptive covariance norm controlled by Theorem~\ref{thm:main-mu}; the right panel reports the Euclidean error only for interpretability.}
    \label{fig:reward-synthetic}
\end{figure}

\paragraph{Results.}
Figure~\ref{fig:reward-synthetic} reports medians over \(100\) independent trials. The left panel shows the theorem-relevant quantity
\(\|\tilde\mu-\mu^\star\|_{\tilde\Sigma+\zeta I}\) together with the corresponding theorem RHS. The empirical adaptive-norm error decreases as \(N_{\mathrm{HF}}\) increases, reflecting the reduction of the statistical component in Theorem~\ref{thm:main-mu}. At the same time, for a fixed sample size, larger prescribed belief error \(\epsilon_b\) generally leads to larger estimation error. This is consistent with the perturbation term in the theorem, where belief mismatch enters through the accumulated trajectory-feature error.

The empirical curves remain below the proof-level RHS across the tested regimes. The gap is expected and should not be interpreted as a failure of the scaling law. The theorem is a high-probability guarantee and uses conservative ingredients: self-normalized concentration for the Bradley--Terry score, deterministic covariance-perturbation control, norm comparison between clean and perturbed empirical covariances, and a regularization term chosen to ensure stability under feature perturbation. These steps are designed to produce a robust finite-sample upper bound rather than a numerically tight prediction of the median error.

The left panel also illustrates the main qualitative implication of the theorem. More preference comparisons reduce the statistical error, but they do not remove the effect of a fixed belief approximation error. In the theorem RHS, once the statistical term becomes small, the remaining scale is governed by the belief-induced bias term and the regularization bias. This matches the conceptual message of the paper: under partial observability with imperfect belief construction, reward learning has an additional error channel that is absent from the fully observed Bradley--Terry setting.

The right panel reports \(\|\tilde\mu-\mu^\star\|_2\) as a diagnostic in the ordinary Euclidean parameter norm. This is not the norm controlled by Theorem~\ref{thm:main-mu}; therefore, crossings and small non-monotonicities across \(\epsilon_b\)-curves should not be overinterpreted. Converting the adaptive-norm bound into a Euclidean bound would require additional lower-eigenvalue control of \(\tilde\Sigma+\zeta I\). Nevertheless, the Euclidean plot shows the same broad behavior: increasing \(N_{\mathrm{HF}}\) improves reward-parameter recovery, while larger belief perturbations tend to make estimation harder. Overall, the experiment supports the downstream mechanism established by Theorem~\ref{thm:main-mu}: belief-induced feature perturbations produce a controlled but persistent bias in Bradley--Terry reward estimation, while the statistical component decreases with the number of preference comparisons.

\paragraph{Summary of experimental findings.}
Taken together, the experiments should be viewed as controlled synthetic diagnostics for the two analytical mechanisms studied in the paper, rather than as empirical validation of the full theory. The belief-stability experiment illustrates how the quantities appearing in Theorem~\ref{thm:param-belief-lip} behave in finite synthetic instances: larger model perturbations and weaker realized stability lead to larger belief mismatch, while the error remains bounded over the tested horizon under stable filtering regimes. The reward-learning experiment then isolates the downstream effect of a prescribed belief-accuracy level and illustrates the qualitative decomposition in Theorem~\ref{thm:main-mu}: increasing \(N_{\mathrm{HF}}\) reduces the statistical component, whereas fixed belief perturbation induces a persistent error contribution. These experiments therefore serve as sanity checks for the scaling behavior and error-propagation interpretation of the bounds, while leaving a full end-to-end empirical study of preference learning with learned POMDP models to future work.
```

\end{document}